\DeclareMathAlphabet{\mathsf}{OT1}{\sfdefault}{m}{n}
\newcommand{\nocontentsline}[3]{}
\newcommand{\tocless}[2]{\bgroup\let\addcontentsline=\nocontentsline#1{#2}\egroup}
\def\dual#1{\expandafter\dual@aux#1\@nil}
\def\dual@aux#1/#2\@nil{\begin{tabular}{@{}c@{}}#1\\#2\end{tabular}}
\DeclareMathAlphabet{\amathbb}{U}{bbold}{m}{n}
\newtheoremstyle{teoremas}
{12pt}
{13pt}
{\itshape}
{}
{\bfseries}
{}
{.5em}
{}
\theoremstyle{teoremas}
\newtheorem{theorem}{Theorem}[section]
\newtheorem{corollary}[theorem]{Corollary}
\newtheorem{lemma}[theorem]{Lemma}
\newtheorem{proposition}[theorem]{Proposition}
\newtheoremstyle{definition}
{12pt}
{12pt}
{}
{}
{\bfseries}
{}
{.5em}
{}
\theoremstyle{definition}
\newtheorem{definition}[theorem]{Definition}
\newtheorem{conjecture}[theorem]{Conjecture}
\newtheorem{problem}[theorem]{Problem}
\newtheorem{question}[theorem]{Question}
\newtheorem{example}[theorem]{Example}
\newtheorem{remark}[theorem]{Remark}
\newcommand{\M}{\mathsf{M}}
\newcommand{\N}{\mathsf{N}}
\newcommand{\U}{\mathsf{U}}
\DeclareMathOperator{\rk}{rk}
\newcommand{\girth}{\operatorname{girth}}
\DeclareMathOperator*{\conv}{conv}
\newcommand{\indicator}[1]{\overline{\amathbb{1}}\!\left({#1}\right)}
\newcommand{\symindicator}[1]{\amathbb{1}\!\left({#1}\right)}
   \def\MR#1{}
\title{The polytope of all matroids}
\author{Luis Ferroni\and Alex Fink}
\address{(L. Ferroni)
 Universit\`a di Pisa, Pisa, Italy 
}
\email{luis.ferroni@unipi.it}
\address{(A. Fink)
 Queen Mary University of London, London, United Kingdom
}
\email{a.fink@qmul.ac.uk}
\subjclass[2020]{Primary: 05B35, 52B40, 14T15}
\begin{document}

\begin{abstract}
    It is possible to write the indicator function of any matroid polytope as an integer combination of indicator functions of Schubert matroid polytopes. In this way, every matroid on $n$ elements of rank~$r$ can be thought of as a lattice point in the space having a coordinate for each Schubert matroid on $n$ elements of rank~$r$. 
    We study the convex hull of all these lattice points, with particular focus on the vertices, which come from the matroids we call extremal matroids. We show that several famous classes of matroids arise as faces of the polytopes, and in many cases we determine the dimension of this face explicitly. As an application, we show that there exist valuative invariants that attain non-negative values at all representable matroids, but fail to be non-negative in general.  
\end{abstract}

\maketitle

\allowdisplaybreaks

\section{Introduction}\label{sec:one}

Many open problems in matroid theory, especially within combinatorial Hodge theory, can be stated as conjectural non-negativity of matroid invariants. 
Often, these invariants are \emph{valuative} under matroid polytope subdivisions: a valuative function provides a notion of ``measure'' for matroid polytopes.
This property motivates the use of the tools of polyhedral geometry even if the source of the original problem is removed from the point of view of polytopes.  

Given a matroid $\M$ with ground set $[n]$ of rank $r$, a result of Derksen and Fink \cite{derksen-fink} 
allows us to valuatively decompose the indicator function of the base polytope $\mathscr{P}(\M)$ as an integer linear combination of indicator functions of base polytopes of Schubert matroids 
(a.k.a.\ nested matroids) of the same ground set and rank. Moreover, this decomposition is always unique. Thus, to each matroid one can associate its vector of coordinates in the basis of Schubert matroids. We provide a careful recapitulation of these notions in Section~\ref{sec:preliminaries}. 

The main subjects of this article are two polytopes,
not themselves matroid polytopes but rather \emph{parameter polytopes} for these Schubert expansions. 
The first is the polytope $\overline{\Omega}_{r,n}$ obtained by taking the convex hull of all points associated to matroids on a fixed ground set $[n]$ of rank $r$, using Schubert expansions to provide coordinates. 
The second is its unlabelled counterpart, the polytope $\Omega_{r,n}$ obtained by taking the convex hull of all isomorphism classes of matroids using the coordinates given by the isomorphism classes of Schubert matroids. We introduce these polytopes in Section~\ref{sec:three}.

In the polytope $\overline{\Omega}_{r,n}$ every matroid is located at a vertex, but the situation for $\Omega_{r,n}$ is subtler. This leads to a new notion that we study in Section~\ref{sec:four}: that of \emph{extremal matroid}. The definition of $\Omega_{r,n}$ implies that extremal matroids are precisely those matroids for which some valuative invariant attains its global maximum. This makes them natural candidates to address the validity of positivity conjectures in matroid theory. One can view $\Omega_{r,n}$ as a discrete object which parametrizes potential counterexamples to conjectures like those in our first paragraph. 

Section~\ref{sec:faces} investigates the facial structure of our polytopes. We show that various special classes of matroids (e.g., disconnected matroids, simple matroids, sparse paving matroids, paving matroids, and elementary split matroids) correspond to faces of the polytopes $\Omega_{r,n}$ and $\overline{\Omega}_{r,n}$. In some cases we compute the dimension of these faces. For example, sparse paving matroids form a $1$-dimensional face, i.e.\ an edge, of $\Omega_{r,n}$. 
On account of this observation we expect the polytopes $\Omega_{r,n}$ to have ``few'' vertices, compared to the number of all isomorphism classes of matroids on $n$ elements of rank~$r$.

Section~\ref{sec:extremal-non-representable} addresses a folklore question in matroid theory that provided a large part of the inspiration for this paper. Is there a valuative invariant that takes non-negative values at all representable matroids but fails to be non-negative in general? We answer this question affirmatively. 
In the polytopal language, what we show is that $\Omega_{r,n}$ has a vertex
such that all of the extremal matroids occupying it are non-representable.

\subsection*{Acknowledgements}
    The authors thank Matt Larson for motivating discussions about non-negative valuations on representable matroids, and Joseph Bonin, Christian Haase, and Benjamin Schr\"oter for helpful and inspiring conversations about structural properties of the polytopes introduced in the present paper.
    We thank the Bernoulli Centre for Fundamental Studies at EPFL,
    where this work was conceived,
    and the Institute for Advanced Study, where it was completed, for their support.
    The first author received support from the Minerva Research Foundation, at the School of Mathematics of the IAS. The second author received support from 
    the Engineering and Physical Sciences Research Council [grant number EP/X001229/1].

\section{Preliminaries}\label{sec:preliminaries}

This paper assumes basic familiarity with
matroid theory, including matroid polytopes. 
For undefined concepts in general matroid theory, we refer to \cite{oxley,welsh}, whereas for more details about matroid polytopes we suggest \cite{ardila-fink-rincon,derksen-fink,ferroni-schroter}.

A \emph{Schubert matroid} is a matroid whose lattice of cyclic flats is a chain. 
Readers who have encountered other definitions of Schubert matroid should be aware that we do not privilege one order of the ground set.

Schubert matroids are isomorphic to lattice path matroids whose lower path coincides with the bottom-right border of the grid. We refer the reader to \cite{bonin-demier} for background about lattice path matroids. 
We follow the conventions of \cite[Section~2.2]{ferroni-schroter}, which we review:
the bases of a \emph{lattice path matroid} $\M[L,U]$ defined by a lower path $L$ and an upper path $U$ correspond to the positions of the upward steps in each of the lattice paths that lie between $L$ and $U$.
When we depict a Schubert matroid as a lattice path matroid, we highlight the position of the upper path, with the lower path taken to be the lower-right border of the grid. 
For example, in Figure~\ref{fig:schuberts-4-2} (Example~\ref{example:schuberts-4-2}), the second matroid from the left 
is
a matroid of rank $2$ on $E=\{1,2,3,4\}$ with exactly two bases: $\{2,4\}$ and $\{3,4\}$.

There is a unique isomorphism class of connected matroids on $n$ elements of rank~$r$ attaining the minimum number of bases.
This class includes the Schubert matroid $\mathsf{T}_{r,n}$ 
whose lattice path diagram has upper path $U=(0,1)(1,0)^{n-r-1}(0,1)^{r-1}(1,0)$,
i.e.\ is a hook-shaped Young diagram.
Dinolt \cite{dinolt} gave the name \emph{minimal matroids} to the matroids isomorphic to $\mathsf{T}_{r,n}$, which was popularized by the first author in \cite{ferroni-minimal}.

We will use the following notation for sets of isomorphism classes of matroids throughout this article.
    \begin{align*}
        \mathcal{M}_{r,n} &= \{\text{Matroids on $[n]$ of rank $r$ up to isomorphism}\},\\
        \mathcal{S}_{r,n} &= \{\text{Schubert matroids on $[n]$ of rank $r$ up to isomorphism}\}.
    \end{align*}
In general, when we do not wish to identify isomorphic matroids, 
we use barred symbols.
So our sets of labelled matroids are
    \begin{align*}
        \overline{\mathcal{M}}_{r,n} &= \{\text{Matroids on $[n]$ of rank $r$}\},\\
        \overline{\mathcal{S}}_{r,n} &= \{\text{Schubert matroids on $[n]$ of rank $r$}\}.
    \end{align*}

\subsection{Indicator functions and valuations}\label{sec:valuations}

In this section we review the fact that indicator functions of matroid polytopes can be written as integer combinations of indicator functions of matroid polytopes of Schubert matroids. That observation underlies the definition of the new polytopes we study below. 

Throughout this article we will denote by $\mathscr{P}(\M)$ the base polytope of $\M$, and by $\mathscr{P}_{\mathrm I}(\M)$ the independence polytope of $\M$. 

We will mildly expand upon a result of Hampe \cite[Theorem~3.12]{hampe}.
Hampe explained how to write Bergman fans of loopless matroids as linear combinations of Bergman fans of loopless Schubert matroids,
but the same relations hold among base polytopes, and the looplessness assumption is inessential.
Let us start by recapitulating key notions. 

\begin{definition}
    Let $X$ be a subset of $\mathbb{R}^n$.
    Its \emph{indicator function} $\indicator{X}:\mathbb{R}^n\to\mathbb Z$ is defined by
    \[ \indicator{X}(x) = \begin{cases} 1 & x\in X,\\ 0 & \text{otherwise}.\end{cases}\]
\end{definition}

The indicator function we will use most belongs to the matroid base polytope, $\indicator{\mathscr{P}(\M)}$. 
We will also make use of $\indicator{\mathscr{P}_{\mathrm I}(\M)}$.
As suggested by the overbar in the notation,
we will introduce a variant of the above definition adapted to isomorphism classes
in Definition~\ref{def:symindicator}.

\begin{definition}\label{def:valuation}
    Let $A$ be an abelian group. A map $f:\overline{\mathcal{M}}_{r,n}\to A$ is said to be a \emph{valuative function} if
        \[ \sum_{i=1}^s a_i f(\M_i) = 0\enspace \text{ whenever }\enspace \sum_{i=1}^s a_i \indicator{\mathscr{P}(\M_i)} \text{ is the identically zero function}\]
    for coefficients $a_i \in \mathbb{Z}$.
\end{definition}

Any relation of the form $\sum_{i=1}^s a_i \indicator{\mathscr{P}(\M_i)}=0$ is called a \emph{valuative relation}. 
One set of valuative relations that has received attention
are those encoding ``the inclusion-exclusion principle'' for \emph{matroid subdivisions}. 
A matroid subdivision of~$\mathscr{P}(\M)$ is a polyhedral complex all of whose cells are base polytopes of matroids and for which the union of all cells yields $\mathscr{P}(\M)$.
Whenever a matroid subdivision of~$\mathscr{P}(\M)$ is exhibited,
any valuative function $f$ satisfies the equality 
\begin{equation}\label{eq:inc-exc}
    f(\M) = \sum_{J\subseteq [s]} (-1)^{|J| - 1} f\big{(}\bigcap_{j \in J} \M_j\big{)}
\end{equation}
where $\mathscr{P}(\M_1),\ldots, \mathscr{P}(\M_s)$ are the maximal cells of the subdivision, and $\bigcap_{j\in J} \M_j$ stands for the matroid whose base polytope is the intersection of the base polytopes of all the $\M_j$.
If this intersection is empty then the symbol $f(\bigcap_{j\in J} \M_j)$ should be read as~0.

\begin{remark}\label{rem:definitions-of-valuation}
Several notions of ``valuation'' appear in the literature:
\cite[Appendix~A]{stellahedral} compares five of them.
Definition~\ref{def:valuation} is the strongest one.
Each of the other definitions prescribes a list of valuative relations to be satisfied arising from a certain kind of configuration of convex bodies (in our case, matroid polytopes),
and therefore is strictly weaker when the list fails to generate the group of all valuative relations.
Derksen and Fink \cite[Theorem~3.5]{derksen-fink} showed that 
the valuative relations underlying \eqref{eq:inc-exc} do generate the group,
and therefore the assumption \eqref{eq:inc-exc} on~$f$
is equivalent to Definition~\ref{def:valuation}. For a more detailed treatment on valuations on general convex polyhedra we refer to \cite[Chapter~7]{barvinok}.
\end{remark}

\begin{example}\label{ex:constants-are-valuations}
    A constant function $f:\overline{\mathcal{M}}_{r,n}\to A$ is a valuation. This fact, which is not obvious a~priori, is proved in \cite[Proposition~4.4]{ardila-fink-rincon}.
\end{example}

\begin{example}\label{ex:compendium}
    Besides constants, many well-known and useful functions are valuations. We list below a few of them that we will need in the present paper.
    \begin{itemize}
        \item For any subset $B\subseteq [n]$ of size $|B|=r$, the function $f_B:\overline{\mathcal{M}}_{r,n}\to \mathbb{Z}$ defined by
    \begin{equation} \label{eq:indicator-basis}
        f_B(\M) = \begin{cases} 1 & \text{if $B$ is a basis of $\M$,}\\  0 & \text{otherwise,}\end{cases}
    \end{equation}
    is a valuation by definition,
    because it is the evaluation of $\indicator{\mathscr{P}(\M)}$ at the zero-one indicator vector of~$B$.
    \item For a given set $S\subseteq [n]$, the map $r_S:\overline{\mathcal{M}}_{r,n}\to \mathbb{Z}$ given by
    \[ r_S(\M) = \rk_{\M}(S)\]
    is a valuation \cite[Theorem~5.1]{ardila-fink-rincon}.
    \item The number of sets of a given rank and size \cite[Theorem~5.4]{ardila-fink-rincon}. In particular, the Tutte polynomial and its linear specializations such as the beta invariant, or the number of bases of the matroid.
        \item The number of (chains of) flats of with prescribed sizes and ranks \mbox{\cite[Theorem~5.6]{bonin-kung},} \cite[Theorem~8.2]{ferroni-schroter}.
        \item The number of circuits with a prescribed size \cite[Corollary~5.3]{bonin-kung}.
    \end{itemize}
\end{example}

We will sometimes find it useful to consider relations among indicator functions of \emph{independence} polytopes. This gives exactly the same theory of matroid valuations:

\begin{lemma}\label{lemma:valuative-relations-independence-polytope}
    Let $\M_1,\ldots,\M_s \in \overline{\mathcal{M}}_{r,n}$. The following equivalence holds:
    \[ \sum_{i=1}^s a_i \indicator{\mathscr{P}(\M_i)} = 0 \enspace \iff \enspace \sum_{i=1}^s a_i \indicator{\mathscr{P}_{\mathrm I}(\M_i)} = 0.\]
\end{lemma}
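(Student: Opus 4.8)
The plan is to produce an explicit invertible linear relationship between the indicator functions of base polytopes and those of independence polytopes, so that any linear dependence among the former transfers to a linear dependence among the latter and vice versa. The key observation is that the independence polytope $\mathscr{P}_{\mathrm I}(\M)$ is the union of the base polytopes $\mathscr{P}(\M')$ where $\M'$ ranges over all matroids obtained from $\M$ by ``lowering the rank'', together with translates; more precisely, for a matroid $\M$ on $[n]$ of rank $r$, one has the classical fact that $\mathscr{P}_{\mathrm I}(\M) = \bigcup_{k=0}^{r} \mathscr{P}(\M^{(k)})$ where $\M^{(k)}$ is the rank-$k$ truncation-like matroid whose bases are the independent sets of $\M$ of size $k$ — but this is not a disjoint union, so one must be slightly more careful. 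The cleaner route is via the hyperplane slices: intersecting $\mathscr{P}_{\mathrm I}(\M)$ with the affine hyperplane $\sum x_i = k$ yields exactly $\mathscr{P}(\M_{\le k})$, the base polytope of the matroid of rank $\min(k,r)$ whose independent sets of top size are the size-$k$ independent sets of $\M$ (for $k \le r$ this is the ``$k$-truncation'' and for $k > r$ it is empty after projecting, or rather one gets dilated copies).

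First I would fix the combinatorial bookkeeping: for each matroid $\M$ of rank $r$ on $[n]$ and each integer $0 \le k \le r$, let $\M\langle k\rangle$ denote the matroid on $[n]$ of rank $k$ whose bases are the size-$k$ independent sets of $\M$ (this is well-defined since independent sets of a fixed size form the bases of a matroid — a truncation). Then I would establish the pointwise identity
\[
\indicator{\mathscr{P}_{\mathrm I}(\M)} \;=\; \sum_{k=0}^{r} \indicator{\mathscr{P}(\M\langle k\rangle) + \text{(shift)}},
\]
where the shift accounts for the fact that $\mathscr{P}_{\mathrm I}(\M)$ lives in the slab $0 \le \sum x_i \le r$ rather than on a single hyperplane. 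Concretely, every point of $\mathscr{P}_{\mathrm I}(\M)$ lies in a unique slice $\{\sum x_i = t\}$ for $t \in [0,r]$, and that slice is (a section of) a matroid base polytope; decomposing the slab into these sections gives a valuative relation expressing $\indicator{\mathscr{P}_{\mathrm I}(\M)}$ as a $\mathbb{Z}$-combination of $\indicator{\mathscr{P}(\cdot)}$'s. This is the $(\Leftarrow)$-friendly direction: it shows the span of the $\indicator{\mathscr{P}_{\mathrm I}(\M_i)}$ is contained in the span of base-polytope indicators.

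For the reverse containment, I would invert the transformation. The matroid $\M$ of rank $r$ can be recovered from its truncations, and dually one should be able to express $\indicator{\mathscr{P}(\M)}$ as a $\mathbb{Z}$-combination of $\indicator{\mathscr{P}_{\mathrm I}(\cdot)}$'s by an inclusion–exclusion over ranks: roughly $\indicator{\mathscr{P}(\M)} = \indicator{\mathscr{P}_{\mathrm I}(\M)} - \indicator{\mathscr{P}_{\mathrm I}(\M\langle r-1\rangle)}$ once one takes the top slice, since intersecting with the slab's top facet $\{\sum x_i = r\}$ picks out $\mathscr{P}(\M)$ and the rest of $\mathscr{P}_{\mathrm I}(\M)$ agrees with $\mathscr{P}_{\mathrm I}(\M\langle r-1\rangle)$. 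Making the ``intersect with a facet'' operation compatible with valuative relations is harmless because restricting all indicator functions in a relation to a fixed affine subspace preserves the relation. Iterating, one gets that each $\indicator{\mathscr{P}(\M_i)}$ lies in the span of the $\indicator{\mathscr{P}_{\mathrm I}(\cdot)}$'s.

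The main obstacle I anticipate is handling the non-disjointness of the slices and the boundary/facet identifications cleanly — in particular, verifying that the ``slice at $\sum x_i = k$'' of an independence polytope really is the base polytope of the $k$-truncation (for $k \le r$) and is empty otherwise, and that the decomposition of the slab into slices is a genuine matroid subdivision (or at least a valuative decomposition). Once that geometric lemma is in hand, everything else is linear algebra: both transformations are unitriangular with respect to the rank filtration, hence invertible over $\mathbb{Z}$, so the two spans coincide and the stated equivalence follows. An alternative, possibly slicker, approach that avoids slicing is to use the known fact (e.g.\ from \cite{ardila-fink-rincon} or \cite{derksen-fink}) that $\M \mapsto \indicator{\mathscr{P}_{\mathrm I}(\M)}$ and $\M \mapsto \indicator{\mathscr{P}(\M)}$ generate the same subgroup of the valuative module, citing the explicit change of basis there; if such a statement is available it would shorten the argument to a paragraph, but I would still want to include the slicing picture for self-containedness.
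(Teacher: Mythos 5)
The approach has a genuine gap: both of the identities your plan rests on are false, for support reasons. Every base polytope of a rank-$k$ matroid on $[n]$ lies in the hyperplane $\{\sum_i x_i = k\}$, so any finite integer combination of indicator functions of base polytopes vanishes outside a finite union of hyperplanes; by contrast, $\mathscr{P}_{\mathrm I}(\M)$ is full-dimensional (for loopless $\M$ of positive rank), and $\indicator{\mathscr{P}_{\mathrm I}(\M)}$ equals $1$ at interior points with non-integral coordinate sum. Hence $\indicator{\mathscr{P}_{\mathrm I}(\M)}$ is not equal to any finite combination of base-polytope indicators, shifted or not, and your identity $\indicator{\mathscr{P}_{\mathrm I}(\M)}=\sum_k\indicator{\mathscr{P}(\M\langle k\rangle)+\text{shift}}$ cannot be repaired: only the integral slices of the slab are base polytopes, and the non-integral slices cannot be accounted for by a finite sum. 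The proposed inversion fails as well: since $\mathscr{P}_{\mathrm I}(\operatorname{tr}(\M))=\mathscr{P}_{\mathrm I}(\M)\cap\{\sum_i x_i\le r-1\}$, the difference $\indicator{\mathscr{P}_{\mathrm I}(\M)}-\indicator{\mathscr{P}_{\mathrm I}(\operatorname{tr}(\M))}$ is the indicator of the full-dimensional region $\{x\in\mathscr{P}_{\mathrm I}(\M):\sum_i x_i>r-1\}$, which strictly contains $\mathscr{P}(\M)$. So there is no ``unitriangular change of basis'' between the two families: their $\mathbb{Z}$-spans are genuinely different subgroups of the group of functions on $\mathbb{R}^n$, and the lemma does not claim otherwise --- it claims that the two families satisfy the same linear relations, i.e.\ that $\indicator{\mathscr{P}(\M)}\mapsto\indicator{\mathscr{P}_{\mathrm I}(\M)}$ extends to a well-defined isomorphism between their spans.

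The part of your argument that does work is the easy direction: if $\sum_i a_i\indicator{\mathscr{P}_{\mathrm I}(\M_i)}=0$, restrict to the hyperplane $\{\sum_i x_i=r\}$; since every $\M_i$ has rank $r$ one has $\mathscr{P}_{\mathrm I}(\M_i)\cap\{\sum_i x_i=r\}=\mathscr{P}(\M_i)$, and restriction of functions is linear, so the base-polytope relation follows. What is missing is a linear operation on indicator functions for the other direction, and it must be capable of producing a full-dimensional body from a lower-dimensional one, which slicing and truncation cannot do. Two standard devices accomplish this: (a) for fixed $x\in\mathbb{R}^n_{\ge0}$ one has $\indicator{\mathscr{P}_{\mathrm I}(\M)}(x)=\chi\bigl(\indicator{\mathscr{P}(\M)}\cdot\indicator{x+\mathbb{R}^n_{\ge0}}\bigr)$, where $\chi$ is the Euler-characteristic functional, linear on integer combinations of indicators of polytopes; the intersection $\mathscr{P}(\M)\cap(x+\mathbb{R}^n_{\ge0})$ is a compact convex set, nonempty exactly when $x\in\mathscr{P}_{\mathrm I}(\M)$, so applying this pointwise transports a relation among the $\indicator{\mathscr{P}(\M_i)}$ to one among the $\indicator{\mathscr{P}_{\mathrm I}(\M_i)}$; or (b) use that Minkowski sum with the fixed cone $-\mathbb{R}^n_{\ge0}$ is a well-defined linear operation on the group generated by indicator functions of polyhedra, together with $\mathscr{P}_{\mathrm I}(\M)=(\mathscr{P}(\M)-\mathbb{R}^n_{\ge0})\cap\mathbb{R}^n_{\ge0}$. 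This is essentially the route taken in the references the paper cites for this lemma (the paper itself does not reprove it); without an ingredient of this kind your argument does not go through.
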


For a proof we refer to \cite[Proposition~2.1]{lee-patel-spink-tseng} or \cite[Proposition~7.4]{stellahedral}. 

\subsection{Schubert matroids form a basis}
A key result is \cite[Theorem~5.4]{derksen-fink}, restated next.
A second proof appears as \cite[Corollary~7.9]{stellahedral}.

\begin{theorem}\label{thm:DF}
    For every matroid $\M\in \overline{\mathcal{M}}_{r,n}$, the indicator function $\indicator{\mathscr{P}(\M)}$ can be written uniquely as an integer combination
    \begin{equation}\label{eq:integer-combination-of-Schuberts}        
         \indicator{\mathscr{P}(\M)} = \sum_{\mathsf{S} \in \overline{\mathcal{S}}_{r,n}} a_{\mathsf{S}} \indicator{\mathscr{P}(\mathsf{S})}.
    \end{equation}
\end{theorem}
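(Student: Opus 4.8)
The plan is to establish existence and uniqueness of the Schubert expansion \eqref{eq:integer-combination-of-Schuberts} by exhibiting a suitable triangular change of basis. The natural indexing tool is the following: for a matroid $\M$ on $[n]$ and a total order $\prec$ on $[n]$, associate to $\M$ its \emph{$\prec$-Schubert matroid} $\mathsf{S}_\prec(\M)$, whose bases are obtained by taking, for each basis $B$ of $\M$, the lexicographically-minimal (with respect to $\prec$) basis reachable from $B$ by exchanges; equivalently one records which ``Gale-minimal'' basis each basis collapses to. The key combinatorial input is that $\mathsf{S}_\prec(\M)$ really is a Schubert matroid for every $\M$ and every $\prec$, and that if $\M$ is itself a Schubert matroid then there is an order $\prec$ witnessing this (the order in which its chain of cyclic flats is built up), for which $\mathsf{S}_\prec(\M)=\M$.

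First I would introduce a partial order on $\overline{\mathcal{S}}_{r,n}$ (for instance by containment of base polytopes, or by the dominance/Gale order on the associated Young-diagram-type data) refined to a total order, and show that for a fixed generic order $\prec$ the assignment $\M \mapsto \mathsf{S}_\prec(\M)$ is "triangular": the base polytope $\mathscr{P}(\mathsf{S}_\prec(\M))$ is the face of $\mathscr{P}(\M)$ selected by a generic linear functional compatible with $\prec$, hence $\mathsf{S}_\prec(\M)$ is the $\prec$-maximal Schubert matroid whose polytope meets the relative interior of $\mathscr{P}(\M)$. Then, on the free abelian group $G$ generated by the symbols $\indicator{\mathscr{P}(\M)}$ for $\M\in\overline{\mathcal{M}}_{r,n}$ modulo valuative relations, I would argue that the classes $[\indicator{\mathscr{P}(\mathsf{S})}]$ for $\mathsf{S}\in\overline{\mathcal{S}}_{r,n}$ span: one rewrites $\indicator{\mathscr{P}(\M)}$ by pushing to the boundary via the inclusion--exclusion relation \eqref{eq:inc-exc} applied to a matroid subdivision of $\mathscr{P}(\M)$ obtained from a generic regular subdivision, each piece being ``closer'' to a Schubert matroid in the order $\prec$; induction on the rank in this order terminates at Schubert matroids, giving existence of \eqref{eq:integer-combination-of-Schuberts}.

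For uniqueness, i.e.\ that the Schubert matroid classes are linearly independent in $G$ (equivalently, that no non-trivial valuative relation holds among Schubert polytopes), I would pair $G$ against an explicit family of valuative invariants that separates Schubert matroids. Concretely, for each order $\prec$ the invariant $\M \mapsto [\,\mathsf{S}_\prec(\M) = \mathsf{S}\,]$ (the indicator of whether the $\prec$-Schubert matroid of $\M$ equals a fixed $\mathsf{S}$) is valuative — this follows because the relevant face-selection is additive over subdivisions — and on the Schubert matroid $\mathsf{S}$ itself, choosing $\prec$ adapted to $\mathsf{S}$, this invariant evaluates to $1$ at $\mathsf{S}$ and to $0$ at every $\prec$-larger Schubert matroid; triangularity then forces the coefficient matrix to be unitriangular, hence invertible over $\Z$, yielding uniqueness.

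The main obstacle is the combinatorial heart of the triangularity claim: verifying that $\mathsf{S}_\prec(\M)$ is a Schubert matroid for \emph{every} $\M$, that its base polytope is exactly the $\prec$-generic face of $\mathscr{P}(\M)$, and that the resulting order-theoretic "staircase" is strictly triangular so that the induction terminates. This is where the structure theory of Schubert (nested) matroids — their description via a permutation and a rank jump sequence, and the behavior of Gale-minimal bases under matroid polytope faces — must be deployed carefully. Once this face/flag picture is in place, both the spanning argument (via inclusion--exclusion on regular matroid subdivisions, whose existence for generic weightings is standard) and the independence argument (via the separating valuative invariants above) are essentially formal; assembling them gives both halves of Theorem~\ref{thm:DF}.
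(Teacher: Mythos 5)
Your outline founders at two places, and they are precisely the places you flag as ``the combinatorial heart''. First, the basic construction is not well defined as stated: the basis-exchange graph of a matroid is connected, so ``the lexicographically minimal basis reachable from $B$ by exchanges'' is the same basis (the Gale-minimal basis of $\M$) for every $B$, and your $\mathsf{S}_\prec(\M)$ degenerates. Relatedly, the face of $\mathscr{P}(\M)$ selected by a generic linear functional is a single vertex, never the polytope of a Schubert matroid with more than one basis, so the triangularity statement built on that identification cannot be literally correct. Even granting a repaired definition (say, the Schubert matroid generated by the Gale-minimal basis), the two claims your uniqueness step rests on --- that $\N\mapsto[\mathsf{S}_\prec(\N)=\mathsf{S}]$ is valuative, and that the pairing is unitriangular --- are exactly the content that needs proof; equality-indicator invariants are not valuative in general, and ``face-selection is additive over subdivisions'' is an assertion, not an argument. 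For the record, the paper does not reprove Theorem~\ref{thm:DF}: it cites Derksen--Fink (and the stellahedral paper), whose actual proof does pair a triangularly-ordered family against explicitly constructed valuations, so your uniqueness plan is in the right spirit but with its key lemmas missing.

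Second, the existence argument is not just incomplete but structurally wrong. Inclusion--exclusion over a matroid subdivision of $\mathscr{P}(\M)$ only ever produces matroids whose base polytopes are \emph{contained} in $\mathscr{P}(\M)$, whereas the Schubert expansion in general requires Schubert matroids whose polytopes strictly \emph{contain} $\mathscr{P}(\M)$, entering with negative coefficients; this is visible in the paper's Theorem~\ref{thm:indicator-functions-schubert}, where $\mathscr{P}(\mathsf{S}_{\mathrm{C}})\supseteq\mathscr{P}(\M)$ for every chain $\mathrm{C}$ of cyclic flats. A concrete counterexample to your mechanism already appears in Example~\ref{example:schuberts-4-2}: the matroid $\U_{1,2}\oplus\U_{1,2}$ is not a Schubert matroid, and its base polytope is the square with edge directions $e_1-e_2$ and $e_3-e_4$, which admits \emph{no} nontrivial matroid subdivision (any $2$-dimensional cell would need the diagonal, which is not parallel to any $e_i-e_j$, as an edge). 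So the induction ``each piece is closer to a Schubert matroid'' cannot even start, while the true expansion
\[
\indicator{\mathscr{P}(\U_{1,2}\oplus\U_{1,2})}
 = \indicator{\mathscr{P}(\mathsf{S}_1)}+\indicator{\mathscr{P}(\mathsf{S}_2)}-\indicator{\mathscr{P}(\U_{2,4})},
 \qquad \mathsf{S}_1,\mathsf{S}_2\cong\mathsf{T}_{2,4},
\]
uses $\U_{2,4}$, whose polytope strictly contains the square. Any proof of existence must therefore work in the valuative group with signed relations that leave $\mathscr{P}(\M)$ (as in \eqref{eq:hampe} via the M\"obius function of the cyclic chain lattice), not by subdividing $\mathscr{P}(\M)$ itself.
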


\begin{remark}
     It follows from the definitions that for any valuative function $f:\overline{\mathcal{M}}_{r,n}\to A$, the evaluation of $f$ at the matroid $\M$ can be computed as 
    \[f(\M) = \sum_{\mathsf{S} \in \overline{\mathcal{S}}_{r,n}} a_{\mathsf{S}} f(\mathsf{S}).\]
    Another immediate consequence of the above result is that valuative functions $f:\overline{\mathcal{M}}_{r,n}\to A$ correspond bijectively to set functions $f:\overline{\mathcal{S}}_{r,n}\to A$.
\end{remark}

\begin{remark}\label{remark:loops-coloops}
    If the element $i\in E$ is a loop (resp.\ coloop) of the matroid $\M$, then all the Schubert matroids $\mathsf{S}$ appearing on the right hand side of equation~\eqref{eq:integer-combination-of-Schuberts} with nonvanishing coefficient, $a_\mathsf{S}\neq 0$, must have $i$ as a loop (resp.\ coloop),
    and the equation remains true when $i$ is deleted from every matroid. 
    This is immediate from the fact that $i$ being a loop (resp.\ coloop) is equivalent to $\mathscr{P}(\M)$ being contained in the hyperplane defined by the equation $x_i=0$ (resp. $x_i=1$),
    and the uniqueness of the decomposition:
    the relation \eqref{eq:integer-combination-of-Schuberts} for $\mathscr{P}(\M\setminus i)$, inside this hyperplane,
    must provide the relation \eqref{eq:integer-combination-of-Schuberts} for~$\mathscr{P}(\M)$.
\end{remark}

The next lemma relates the Schubert decomposition of $\M$ with that of its truncation. Recall that the truncation $\operatorname{tr}(\M)$ of a matroid $\M$ is the matroid having as independent sets all the independent sets in~$\M$ of corank $\ge1$ (i.e.\ other than bases). 

\begin{lemma}\label{lemma:truncation}
    Let $\M\in \overline{\mathcal{M}}_{r,n}$ be a matroid such that
        \begin{equation}\label{eq:trunc0}
        \indicator{\mathscr{P}(\M)} = \sum_{\mathsf{S} \in \overline{\mathcal{S}}_{r,n}} a_{\mathsf{S}} \indicator{\mathscr{P}(\mathsf{S})}.
        \end{equation}
    Then, the indicator function of the truncated matroid $\operatorname{tr}(\M)\in \overline{\mathcal{M}}_{r-1,n}$ satisfies
        \begin{equation}\label{eq:trunc1}
        \indicator{\mathscr{P}(\operatorname{tr}(\M))} = \sum_{\mathsf{S} \in \overline{\mathcal{S}}_{r,n}} a_{\mathsf{S}} \indicator{\mathscr{P}(\operatorname{tr}(\mathsf{S}))}.
        \end{equation}
\end{lemma}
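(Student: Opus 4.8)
The natural approach is to find a single geometric operation on base polytopes that realizes truncation at the level of indicator functions, and then check that it is linear. Recall that truncation corresponds to a well-understood operation on polytopes: $\mathscr{P}(\operatorname{tr}(\M))$ is obtained from $\mathscr{P}(\M)$ by intersecting with the half-space where the rank-one larger "degree-of-freedom" direction is cut off — more concretely, if we write points of $\mathbb{R}^n$ and look at the hyperplane $\sum x_i = r$ versus $\sum x_i = r-1$, then $\mathscr{P}(\operatorname{tr}(\M))$ is the set of points obtainable by moving from a point of $\mathscr{P}(\M)$ one unit "down" along a coordinate direction while staying in the (generalized) permutohedron. The cleanest packaging is via the \emph{Minkowski difference / projection} description: $\mathscr{P}(\operatorname{tr}(\M)) = \mathscr{P}(\M) \ominus \Delta_{[n]}$ intersected with the appropriate hyperplane, or equivalently via the deletion of a generic direction. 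I would look for whichever of these descriptions makes the operator $T$ sending $\indicator{\mathscr{P}(\M)}$ to $\indicator{\mathscr{P}(\operatorname{tr}(\M))}$ manifestly a group homomorphism on the free abelian group generated by indicator functions of (poly)matroid base polytopes.

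The key steps, in order. First, I would establish that truncation is induced by a piecewise-linear map or a "sweep" operation $\Phi$ on $\mathbb{R}^n$ such that $\Phi(\mathscr{P}(\M)) = \mathscr{P}(\operatorname{tr}(\M))$ for every matroid $\M$ (including the Schubert matroids). Second — and this is the crucial linearity point — I would show that the assignment $\indicator{X}\mapsto \indicator{\Phi(X)}$ extends to a well-defined homomorphism on the subgroup of functions $\mathbb{R}^n \to \mathbb{Z}$ spanned by the $\indicator{\mathscr{P}(\M)}$; for a genuine map $\Phi$ this is automatic only if $\Phi$ is injective on the relevant region, so more likely I would phrase $T$ as the composite of pullback along an inclusion, a Minkowski-difference operation (which is linear on indicator functions of polytopes with a common recession behavior, by a theorem of the McMullen/Groemer type or by the results cited in \cite{derksen-fink, stellahedral}), and a pushforward. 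Third, once $T$ is known to be linear, apply it to the identity \eqref{eq:trunc0}: linearity immediately distributes $T$ across the sum, giving $\indicator{\mathscr{P}(\operatorname{tr}(\M))} = \sum_{\mathsf{S}} a_{\mathsf{S}}\, T\big(\indicator{\mathscr{P}(\mathsf{S})}\big) = \sum_{\mathsf{S}} a_{\mathsf{S}}\, \indicator{\mathscr{P}(\operatorname{tr}(\mathsf{S}))}$, which is \eqref{eq:trunc1}.

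An alternative, possibly slicker route avoids constructing $T$ explicitly: truncation can be expressed through the \emph{independence polytope} using $\mathscr{P}(\operatorname{tr}(\M)) $ as a facet-type slice of $\mathscr{P}_{\mathrm I}(\M)$, namely $\mathscr{P}(\operatorname{tr}(\M)) = \mathscr{P}_{\mathrm I}(\M)\cap \{\sum x_i = r-1\}$. Intersection with a fixed hyperplane is obviously a linear operation on indicator functions. So one would start from \eqref{eq:trunc0}, convert it via Lemma~\ref{lemma:valuative-relations-independence-polytope} to the equivalent relation $\sum a_{\mathsf{S}} \indicator{\mathscr{P}_{\mathrm I}(\M)} $ vs.\ $\sum a_{\mathsf{S}}\indicator{\mathscr{P}_{\mathrm I}(\mathsf{S})}$ — wait, more precisely, note $\indicator{\mathscr{P}_{\mathrm I}(\M)} - \sum a_{\mathsf{S}}\indicator{\mathscr{P}_{\mathrm I}(\mathsf{S})}$ is a valuative relation, hence zero — then restrict every term to the hyperplane $\{\sum x_i = r-1\}$, using that this restriction sends $\indicator{\mathscr{P}_{\mathrm I}(\N)}$ to $\indicator{\mathscr{P}(\operatorname{tr}(\N))}$ for any matroid $\N$ of rank $r$ (which needs the easy check that the rank-$(r-1)$ slice of the independence polytope is exactly the base polytope of the truncation, valid for Schubert matroids too), to land on \eqref{eq:trunc1}.

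The main obstacle is the verification in step one (or its analogue): pinning down precisely which polytope operation computes truncation and confirming it does so \emph{uniformly} for all matroids and in particular for all Schubert matroids — i.e., that truncation of a Schubert matroid is again handled by the same formula, with $\operatorname{tr}(\mathsf{S})$ possibly no longer Schubert but still a matroid whose base polytope is the image. (Equation \eqref{eq:trunc1} allows $\operatorname{tr}(\mathsf{S})$ to be an arbitrary rank-$(r-1)$ matroid, so this is fine, but it must be checked that $\mathscr{P}(\operatorname{tr}(\mathsf{S}))$ really is the image of $\mathscr{P}(\mathsf{S})$ under the chosen operation.) The independence-polytope route makes this obstacle smallest, since "slice the independence polytope at total degree $r-1$" is visibly uniform; the only content is the identity $\mathscr{P}_{\mathrm I}(\N)\cap\{\sum x_i=r-1\} = \mathscr{P}(\operatorname{tr}(\N))$, which follows directly from the definition of $\operatorname{tr}$ as "independent sets of corank $\ge 1$" together with the fact that $\mathscr{P}_{\mathrm I}(\N)$ is the convex hull of indicator vectors of independent sets. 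I would therefore present the independence-polytope argument as the main proof.
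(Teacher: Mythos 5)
Your independence-polytope argument is correct and is essentially the paper's own proof: both pass from \eqref{eq:trunc0} to the corresponding relation among independence polytopes via Lemma~\ref{lemma:valuative-relations-independence-polytope} and then realize truncation by cutting along $\sum_i x_i$. The only cosmetic difference is that you slice with the hyperplane $\{\sum_i x_i = r-1\}$ to land directly on base polytopes of truncations, whereas the paper restricts to the half-space $\{\sum_i x_i \le r-1\}$ to get independence polytopes of truncations and then applies Lemma~\ref{lemma:valuative-relations-independence-polytope} a second time.
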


\begin{proof}
    If one assumes equation~\eqref{eq:trunc0}, then by Lemma~\ref{lemma:valuative-relations-independence-polytope} the same linear relation holds among independence polytopes. 
    From there, restricting all functions to 
    $\{x\in\mathbb{R}^n : \linebreak \sum_{i=1}^n x_i\le r-1\}$
    and re-extending by zero
    shows that the counterpart of equation~\eqref{eq:trunc1} holds for independence polytopes. Switching back to base polytopes via Lemma~\ref{lemma:valuative-relations-independence-polytope} completes the proof.
\end{proof}

\begin{remark}
    The truncation of a Schubert matroid is itself a Schubert matroid:
    in the lattice path picture, one cuts off the top row of boxes and adds a new column on the right.
    Hence, the right hand side of equation~\eqref{eq:trunc1} 
    is an expansion into indicator functions of Schubert matroids.  
\end{remark}

\subsection{Effective computation of the Schubert coefficients}

Effective computation of the coefficients $a_{\mathsf{S}}$ in~\eqref{eq:integer-combination-of-Schuberts} is somewhat cumbersome. 
However, following an approach by Hampe \cite{hampe} and Ferroni \cite[Section~5]{ferroni}, it is possible to describe these coefficients combinatorially.

\begin{definition}
    Let $P$ be a finite poset with a minimal and a maximal element.  
    The \emph{chain lattice} $\mathcal{C}_P$ is defined as the lattice
    with a top element $\widehat{\mathbf{1}}$
    whose other elements are all chains of~$P$ that contain the minimal and maximal elements of~$P$, ordered by containment.
    The \emph{cyclic chain lattice} of a matroid~$\M$ is
    $\mathcal{C}_{\mathscr{Z}}(\M) := \mathcal{C}_{\mathscr{Z}(\M)}$,
    where $\mathscr{Z}(\M)$ is the lattice of cyclic flats of~$\M$.
\end{definition}

Consider the M\"obius function of $\mathcal{C}_{\mathscr{Z}}(\M)$, as in Stanley \cite[Chapter~3]{stanley-ec1}. To each element $\mathrm{C}\in \mathcal{C}_{\mathscr{Z}}(\M)$ we can associate the number $\lambda_{\mathrm{C}} = -\mu(\mathrm{C},\widehat{\mathbf{1}})$.
Since each of these elements $\mathrm{C}\ne\widehat{\mathbf{1}}$ is a chain of cyclic flats, there is a unique Schubert matroid $\mathsf{S}_{\mathrm{C}}$ whose lattice of cyclic flats coincides with $\mathrm{C}$.

\begin{theorem}\label{thm:indicator-functions-schubert}
    Let $\M$ be a matroid (possibly with loops and coloops). Then,
    \begin{equation}\label{eq:hampe}
    \indicator{\mathscr{P}(\M)} = \sum_{\substack{\mathrm{C}\in\mathcal{C}_{\mathscr{Z}}(\M)\\\mathrm{C} \neq \widehat{\mathbf{1}}}} \lambda_{\mathrm{C}}\, \indicator{\mathscr{P}(\mathsf{S}_{\mathrm{C}})}.
    \end{equation}
\end{theorem}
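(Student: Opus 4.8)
The plan is to combine Theorem~\ref{thm:DF}, which gives the unique Schubert expansion, with a known combinatorial description of how a matroid polytope's indicator function interacts with the cyclic-flats structure, and to induct on the complexity of the lattice of cyclic flats $\mathscr{Z}(\M)$. First I would record the base case: if $\mathscr{Z}(\M)$ is already a chain, then $\M$ is itself a Schubert matroid, $\mathcal{C}_{\mathscr{Z}}(\M)$ has exactly one element $\mathrm{C}$ below $\widehat{\mathbf{1}}$ (the chain $\mathscr{Z}(\M)$ itself), and $\lambda_{\mathrm{C}} = -\mu(\mathrm{C},\widehat{\mathbf{1}}) = 1$, so \eqref{eq:hampe} reads $\indicator{\mathscr{P}(\M)} = \indicator{\mathscr{P}(\M)}$, which holds. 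The role of loops and coloops should be dispatched first via Remark~\ref{remark:loops-coloops}: a loop is a minimal cyclic flat contained in every cyclic flat, and a coloop lies outside every cyclic flat, so deleting loops and contracting coloops changes neither side of \eqref{eq:hampe} in an essential way, and we may assume $\M$ has neither.

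The core of the argument is a \emph{polytope decomposition} keyed to the cyclic flats. The key input is that for a cyclic flat $F$ of $\M$ with $\emptyset \neq F \neq E$, the base polytope $\mathscr{P}(\M)$ is covered by two smaller matroid polytopes obtained by ``pushing'' the subdivision induced by the hyperplane $\sum_{i \in F} x_i = \rk(F)$: explicitly, writing $\M' $ and $\M''$ for the matroids whose polytopes are $\mathscr{P}(\M) \cap \{\sum_{i\in F} x_i \le \rk(F)\}$ with one fewer constraint active, one gets relations of the form $\indicator{\mathscr{P}(\M)} = \indicator{\mathscr{P}(\M')} + \indicator{\mathscr{P}(\M'')} - \indicator{\mathscr{P}(\M' \cap \M'')}$, where the new matroids have strictly simpler cyclic-flat lattices. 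Iterating this over a linear extension of $\mathscr{Z}(\M)$ expresses $\indicator{\mathscr{P}(\M)}$ as a signed sum over \emph{chains} of cyclic flats, and a bookkeeping computation identifies the coefficient of $\indicator{\mathscr{P}(\mathsf{S}_{\mathrm{C}})}$ as an alternating sum over chains refining~$\mathrm{C}$; this alternating sum is exactly a Möbius-function evaluation in $\mathcal{C}_{\mathscr{Z}}(\M)$. Alternatively — and this is probably cleaner to write — one can verify \eqref{eq:hampe} by checking it holds after evaluating at every point of $\mathscr{P}(\M) \cap \Z^n$ (the lattice points, which for matroid polytopes are the indicator vectors of bases), reducing to a purely poset-theoretic identity: for a fixed basis $B$, the set of Schubert matroids $\mathsf{S}_{\mathrm{C}}$ containing $B$ is an order filter in $\mathcal{C}_{\mathscr{Z}}(\M)\setminus\{\widehat{\mathbf 1}\}$, and $\sum_{\mathrm{C} \text{ in that filter}} (-\mu(\mathrm{C},\widehat{\mathbf 1})) = 1$ by the defining recursion of the Möbius function, since the complementary order ideal is the principal ideal below the chain recording which cyclic flats $B$ ``saturates''. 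One must also check the identity off the lattice points — i.e.\ as functions on all of $\R^n$, not just on lattice points — but this follows because each side is a valuative combination supported on the face structure, or more directly because both sides are determined on a common polytopal subdivision refinement; here I would lean on the fact that matroid base polytope indicator functions are linearly independent enough, or simply cite that two integer combinations of such indicator functions agreeing at all lattice points of a large enough dilate agree identically.

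The main obstacle is the off-lattice-point verification, i.e.\ upgrading a pointwise identity on $\Z^n$ to an identity of functions $\R^n \to \Z$. The subdivision-based argument in the previous paragraph handles this automatically but requires carefully setting up the iterated ``pushing'' construction and proving each intermediate $\M'$, $\M''$, $\M' \cap \M''$ is a genuine matroid with the expected lattice of cyclic flats, which is the technically fiddly part; the lattice-point argument is conceptually transparent but needs a separate lemma (or a citation, e.g.\ to the uniqueness in Theorem~\ref{thm:DF} combined with a dimension/degree count) to conclude identity as functions. I expect the write-up to use the following compromise: establish \eqref{eq:hampe} at lattice points by the Möbius recursion, then invoke Lemma~\ref{lemma:valuative-relations-independence-polytope} together with Theorem~\ref{thm:DF} — since the right-hand side of \eqref{eq:hampe} is \emph{a priori} an integer combination of Schubert indicator functions, and by Theorem~\ref{thm:DF} there is a \emph{unique} such combination equal to $\indicator{\mathscr{P}(\M)}$, it suffices to know the two combinations agree at enough points to force equality of coefficients, and agreement at all bases of all Schubert matroids involved does exactly that via a triangularity/unitriangularity argument in the containment order on Schubert matroids. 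This last triangularity observation — that expressing a Schubert indicator in the Schubert ``basis'' is the identity, and containments of Schubert matroids give a poset under which the Boolean pairing ``$B \in \mathscr{P}(\mathsf S)$'' is unitriangular — is the clean engine that makes the pointwise check suffice, and I would isolate it as the crux of the proof.
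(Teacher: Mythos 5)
Your plan has a genuine gap, and it sits exactly at the step you call the ``clean engine.'' First, checking the identity at the points of $\mathscr{P}(\M)\cap\Z^n$ only covers the easy case: if $B$ is a basis of $\M$ then every $\mathscr{P}(\mathsf{S}_{\mathrm C})$ contains $\mathscr{P}(\M)$, so every term on the right is $1$ and the sum $\sum_{\mathrm C\neq\widehat{\mathbf 1}}\lambda_{\mathrm C}=1$ is the M\"obius recursion; the essential case is a zero-one point $e_B$ with $B\in\binom{[n]}{r}$ \emph{not} a basis of $\M$ (and, beyond that, arbitrary non-lattice $x$ in the hypersimplex), where the left side is $0$ and one must prove that the $\mu$-weighted count of chains avoiding the violated cyclic flats vanishes. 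Your poset-theoretic identity does not address this value-$0$ case, and its monotonicity claim is backwards: a larger chain imposes more inequalities, so $\mathscr{P}(\mathsf{S}_{\mathrm C'})\subseteq\mathscr{P}(\mathsf{S}_{\mathrm C})$ when $\mathrm C\subseteq\mathrm C'$, and $\{\mathrm C: e_B\in\mathscr{P}(\mathsf{S}_{\mathrm C})\}$ is an order ideal, not a filter. Second, the proposed upgrade from agreement at bases to equality of functions is false. Evaluation at the $\binom{n}{r}$ zero-one points is nowhere near injective on the span of the $|\overline{\mathcal{S}}_{r,n}|$ Schubert indicator functions (compare $883$ versus $20$ for $(r,n)=(3,6)$), so no triangularity argument can recover coefficients from those values; concretely, for $(r,n)=(1,2)$ the combination $\indicator{\mathscr{P}(\U_{1,2})}-\indicator{\mathscr{P}(\U_{1,1}\oplus\U_{0,1})}-\indicator{\mathscr{P}(\U_{0,1}\oplus\U_{1,1})}$ involves only Schubert matroids, vanishes at both lattice points of the hypersimplex, yet equals $1$ on the open edge. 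The uniqueness in Theorem~\ref{thm:DF} is uniqueness of an expansion valid as functions on all of $\R^n$, so it cannot be combined with lattice-point agreement in the way you intend, and Lemma~\ref{lemma:valuative-relations-independence-polytope} does not help here either.

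For comparison, the paper does not prove this theorem from scratch at all: it quotes the loopless--coloopless case from \cite[Theorem~5.2]{ferroni} (in the spirit of Hampe \cite{hampe}) and reduces the general case by Remark~\ref{remark:loops-coloops}; your handling of loops and coloops matches that reduction and is fine. A self-contained argument along your evaluation idea would have to work at an arbitrary point $x$ of the hypersimplex, using the halfspace description of a base polytope by its cyclic flats, and then prove the real combinatorial content: when the set $\{Z\in\mathscr{Z}(\M): \sum_{i\in Z}x_i>\rk_\M(Z)\}$ is nonempty, the sum of $-\mu(\mathrm C,\widehat{\mathbf 1})$ over chains avoiding it is $0$. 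That vanishing is not a formal consequence of the M\"obius recursion (the violated set is special, and one needs something like submodularity of $Z\mapsto\rk_\M(Z)-\sum_{i\in Z}x_i$ to exploit it); your alternative hyperplane-split induction could in principle deliver it, but as written it is a plan, not a proof. Either supply that vanishing lemma or cite \cite{ferroni,hampe} as the paper does.
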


\begin{proof}
    This statement is proved for loopless and coloopless matroids as \cite[Theorem~5.2]{ferroni}. 
    The general statement follows by Remark~\ref{remark:loops-coloops}.
\end{proof}

\subsection{Isomorphism classes and valuative invariants}
Often one does not wish to distinguish isomorphic matroids.
The story of valuative matroid functions extends nicely to this setting.

\begin{definition}
    Let $X$ be a set and $A$ be an abelian group.
    \begin{itemize}
        \item An \emph{invariant} on matroids on $n$ elements of rank $r$ is a map $f:\mathcal{M}_{r,n}\to X$.
        \item An invariant $f:\mathcal{M}_{r,n}\to A$ is said to be \emph{valuative} if the function $\overline{f}:\overline{\mathcal{M}}_{r,n}\to A$ on labelled matroids defined by $\overline{f}(\M) = f([\M])$ is a valuation.
    \end{itemize}
\end{definition}

\begin{remark}
    Using Theorem~\ref{thm:DF} one can check that if a valuative function $\overline{f}:\overline{\mathcal{M}}_{r,n}\to A$ satisfies the property that $\overline{f}(\mathsf{S}_1) = \overline{f}(\mathsf{S}_2)$ whenever $\mathsf{S}_1$ and $\mathsf{S}_2$ are isomorphic Schubert matroids, then it descends to a valuative invariant $f:\mathcal{M}_{r,n}\to A$.
\end{remark}

The action of the symmetric group $\mathfrak{S}_n$ on~$[n]$
induces an action on~$\overline{\mathcal{M}}_{r,n}$,
of which $\mathcal{M}_{r,n}$ is the set of orbits.
We write this as a left action,
as well as the action of $\mathfrak{S}_n$ on~$\mathbb{R}^n$ by permuting coordinates.
Taking indicator function of the base polytope intertwines these two actions:
\[\indicator{\mathscr{P}(\sigma\M)}(x) = \indicator{\mathscr{P}(\M)}(\sigma x).\]

\begin{definition}\label{def:symindicator}
Let $X$ be a subset of $\mathbb{R}^n$.
Its \emph{symmetrized indicator function} $\symindicator{X}:\mathbb{R}^n\to\mathbb{Q}$ is defined by
\[\symindicator{X}(x) = \frac1{n!}\sum_{\sigma\in \mathfrak{S}_n}\indicator{X}(\sigma x).\]
\end{definition}
Therefore $\symindicator{\mathscr{P}(\sigma\M)} = \symindicator{\mathscr{P}(\M)}$ for any $\M\in\overline{\mathcal{M}}_{r,n}$ and $\sigma\in \mathfrak{S}_n$,
implying that $\symindicator{\mathscr{P}(\M)}$
is well defined given only the isomorphism class $[\M]$.

\begin{corollary}\label{coro:isomorphism-classes-schubert}
    Let $\M$ be a matroid (possibly with loops and coloops). Then,
    \begin{equation}\label{eq:hampe-iso-classes}
    \symindicator{\mathscr{P}(\M)} = \sum_{\substack{\mathrm{C}\in\mathcal{C}_{\mathscr{Z}}(\M)\\\mathrm{C} \neq \widehat{\mathbf{1}}}} \lambda_{\mathrm{C}}\, \symindicator{\mathscr{P}(\mathsf{S}_{\mathrm{C}})}.
    \end{equation}
\end{corollary}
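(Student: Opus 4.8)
The plan is to deduce Corollary~\ref{coro:isomorphism-classes-schubert} from Theorem~\ref{thm:indicator-functions-schubert} simply by applying the symmetrization operator $X \mapsto \symindicator{X}$ to both sides of equation~\eqref{eq:hampe}. Concretely, start from the identity
\[
\indicator{\mathscr{P}(\M)} = \sum_{\substack{\mathrm{C}\in\mathcal{C}_{\mathscr{Z}}(\M)\\\mathrm{C} \neq \widehat{\mathbf{1}}}} \lambda_{\mathrm{C}}\, \indicator{\mathscr{P}(\mathsf{S}_{\mathrm{C}})}
\]
which holds as an equality of functions $\mathbb{R}^n \to \mathbb{Z}$. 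For each $\sigma \in \mathfrak{S}_n$, precompose with the coordinate permutation $x \mapsto \sigma x$; this is a linear substitution, so it respects the integer linear combination. Then average over all $\sigma \in \mathfrak{S}_n$ and divide by $n!$. Since averaging is $\mathbb{Z}$-linear (indeed $\mathbb{Q}$-linear), it commutes with the finite sum on the right, and we obtain
\[
\symindicator{\mathscr{P}(\M)} = \sum_{\substack{\mathrm{C}\in\mathcal{C}_{\mathscr{Z}}(\M)\\\mathrm{C} \neq \widehat{\mathbf{1}}}} \lambda_{\mathrm{C}}\, \symindicator{\mathscr{P}(\mathsf{S}_{\mathrm{C}})},
\]
using Definition~\ref{def:symindicator} to recognize each symmetrized summand.

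The only point that requires a word of care is that the coefficients $\lambda_{\mathrm{C}}$ depend only on the (unlabelled) isomorphism type of the relevant structures in a way compatible with the symmetrization: but this is automatic here, since we are not re-indexing the sum — we symmetrize termwise with the \emph{same} index set $\mathcal{C}_{\mathscr{Z}}(\M)$ and the \emph{same} Schubert matroids $\mathsf{S}_{\mathrm{C}}$ appearing in Theorem~\ref{thm:indicator-functions-schubert}, merely replacing $\indicator{\,\cdot\,}$ by $\symindicator{\,\cdot\,}$. By the remark preceding Definition~\ref{def:symindicator}, each $\symindicator{\mathscr{P}(\mathsf{S}_{\mathrm{C}})}$ is a well-defined function attached to the isomorphism class $[\mathsf{S}_{\mathrm{C}}]$, so the right-hand side is genuinely an expansion in symmetrized indicator functions of isomorphism classes of Schubert matroids, as desired. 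Note also that the statement is valid even when $\M$ has loops or coloops, exactly because Theorem~\ref{thm:indicator-functions-schubert} was already stated at that level of generality.

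There is no real obstacle: the content of the corollary is entirely carried by Theorem~\ref{thm:indicator-functions-schubert}, and the passage to symmetrized indicator functions is a formal application of a linear operator. If anything, the "hard part" is purely expository — making clear that $\symindicator{\,\cdot\,}$ is $\mathbb{Q}$-linear on the group of indicator functions and that termwise symmetrization does not alter which Schubert matroids appear — but this follows immediately from the definition as a normalized finite sum over $\mathfrak{S}_n$. Hence the proof is a one-line deduction.
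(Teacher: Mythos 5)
Your proof is correct, and it is exactly the argument the paper intends: the corollary is obtained by applying the $\mathbb{Q}$-linear symmetrization operator of Definition~\ref{def:symindicator} termwise to both sides of equation~\eqref{eq:hampe}, which is why the paper states it without further proof. Nothing is missing.
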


We emphasize that the linear combination appearing in equation \eqref{eq:hampe-iso-classes}, after grouping the terms that correspond to isomorphic Schubert matroids, is still an integer combination.

\section{The polytopes \texorpdfstring{$\overline{\Omega}_{r,n}$}{barOmega(r,n)} and \texorpdfstring{$\Omega_{r,n}$}{Omega(r,n)}}\label{sec:three}

\subsection{The definition of the polytopes} In light of Theorem~\ref{thm:indicator-functions-schubert}, we can associate to each matroid $\M \in \overline{\mathcal{M}}_{r,n}$ a point $\overline{p}_{\M}\in \mathbb{Z}^{\overline{\mathcal{S}}_{r,n}}$, where the coordinate with index $\mathsf{S}_{\mathrm{C}}$ is the integer $\lambda_{\mathrm{C}}$, potentially zero, on the right hand side of equation \eqref{eq:hampe}. 

Similarly, to each isomorphism class $[\M] \in \mathcal{M}_{r,n}$ we can associate a point in $p_{[\M]}\in \mathbb{Z}^{\mathcal{S}_{r,n}}$ using Corollary~\ref{coro:isomorphism-classes-schubert}. 
Notice that now the coordinates are sums of the $\lambda_{\mathrm{C}}$ over isomorphism classes of chains $\mathrm{C}$ under the symmetric group action. 
Two chains are in the same isomorphism class if they consist of sets of the same sizes.

For $\M\neq\N$ the points $\overline{p}_{\M}$ and $\overline{p}_{\N}$ are in fact different, since the sums in \eqref{eq:hampe} give distinct indicator functions $\indicator{\mathscr{P}(\M)}\neq\indicator{\mathscr{P}(\N)}$.
The counterpart for the $p_{[\M]}$ is false: see Remark~\ref{rem:nonisomorphic-same-vertex}.

\begin{definition}
    For each $n$ and $r$, we define \emph{the polytope of all matroids} 
    \[\overline{\Omega}_{r,n} := \operatorname{conv}\left( \overline{p}_{\M} : \M \in \overline{\mathcal{M}}_{r,n}\right) \subseteq \mathbb{R}^{\overline{\mathcal{S}}_{r,n}}, \]
    and \emph{the polytope of all matroids up to isomorphism}
    \[\Omega_{r,n} := \operatorname{conv}\left( p_{[\M]} : [\M] \in \mathcal{M}_{r,n}\right)\subseteq \mathbb{R}^{\mathcal{S}_{r,n}} .\]
\end{definition}

\begin{remark}\label{rem:symmetrization-is-a-projection}
    Notice that the polytope $\Omega_{r,n}$ is a linear projection of the polytope $\overline{\Omega}_{r,n}$ via the map $\pi:\mathbb{R}^{\overline{\mathcal{S}}_{r,n}}\to \mathbb{R}^{\mathcal{S}_{r,n}}$ defined on coordinates by $\overline{p}_{\mathsf{S}}\longmapsto p_{[\mathsf{S}]}$. Clearly, for every matroid $\M\in\overline{\mathcal{M}}_{r,n}$ we have that $\overline{p}_{\M} \stackrel{\pi}{\longmapsto} p_{[\M]}$.
\end{remark}

\begin{example}\label{example:schuberts-4-2}
    Let us describe the polytope $\Omega_{2,4}$. It is known that there are exactly $7$ elements in $\mathcal{M}_{2,4}$ (cf.\ \cite[Table~1]{mayhew-royle}). Among these $7$ isomorphism classes, $\binom{4}{2}=6$ of them comprise Schubert matroids. They are represented in Figure~\ref{fig:schuberts-4-2} as lattice path matroids with a highlighted upper path:
    \begin{center}
    \begin{figure}[ht]
        \begin{tikzpicture}[scale=0.65, line width=.5pt]
        
        \draw[line width=2pt,line cap=round] (0,0) -- (2,0) -- (2,2);
        
        \draw (0,0) grid (2,2);
        \end{tikzpicture}\qquad
        \begin{tikzpicture}[scale=0.65, line width=.5pt]
        
        \draw[line width=2pt,line cap=round] (0,0) -- (1,0) -- (1,1) -- (2,1) -- (2,2);
        
        \draw (0,0) grid (2,2);
        \end{tikzpicture}\qquad
        \begin{tikzpicture}[scale=0.65, line width=.5pt]
        
        \draw[line width=2pt,line cap=round] (0,0) -- (1,0) -- (1,2) -- (2,2);
        
        \draw (0,0) grid (2,2);
        \end{tikzpicture}\qquad
        \begin{tikzpicture}[scale=0.65, line width=.5pt]
        
        \draw[line width=2pt,line cap=round] (0,0) -- (0,1) -- (2,1) -- (2,2);
        
        \draw (0,0) grid (2,2);
        \end{tikzpicture}\qquad
        \begin{tikzpicture}[scale=0.65, line width=.5pt]
        
        \draw[line width=2pt,line cap=round] (0,0) -- (0,1) -- (1,1) -- (1,2) -- (2,2);
        
        \draw (0,0) grid (2,2);
        \end{tikzpicture}\qquad
        \begin{tikzpicture}[scale=0.65, line width=.5pt]
        
        \draw[line width=2pt,line cap=round] (0,0) -- (0,2) -- (2,2);
        
        \draw (0,0) grid (2,2);
        \end{tikzpicture}
    \caption{The six (isomorphism classes of) Schubert matroids in $\mathcal{S}_{2,4}$.}\label{fig:schuberts-4-2}
    \end{figure}
    \end{center}
    The only non-Schubert isomorphism class corresponds to $\U_{1,2}\oplus \U_{1,2}$, the direct sum of two uniform matroids of rank $1$ and size $2$. The computation in Corollary~\ref{coro:isomorphism-classes-schubert} yields the following equality:
    \[ \symindicator{\U_{1,2}\oplus \U_{1,2}} = 
        2 \cdot \symindicator{
        \begin{tikzpicture}[baseline=1.5mm,scale=0.30, line width=.5pt]
        \vspace*{10pt}
        \draw[line width=2pt,line cap=round] (0,0) -- (0,1) -- (1,1) -- (1,2) -- (2,2);
        
        \draw (0,0) grid (2,2);
        \end{tikzpicture}}
        - 1\cdot\symindicator{
        \begin{tikzpicture}[baseline=1.5mm,scale=0.30, line width=.5pt]
        
        \draw[line width=2pt,line cap=round] (0,0) -- (0,2) -- (2,2);
        
        \draw (0,0) grid (2,2);
        \end{tikzpicture}}
    \]
    If we label the six Schubert matroids in Figure~\ref{fig:schuberts-4-2} from left to right as coordinates $1$ through~$6$, the polytope $\Omega_{2,4}$ is the convex hull of the seven columns of the following matrix:
    \[\begin{bmatrix}
        1 & 0 & 0 & 0 & 0 & 0 & 0\\
        0 & 1 & 0 & 0 & 0 & 0 & 0\\
        0 & 0 & 1 & 0 & 0 & 0 & 0\\
        0 & 0 & 0 & 1 & 0 & 0 & 0\\
        0 & 0 & 0 & 0 & 1 & 0 & 2\\
        0 & 0 & 0 & 0 & 0 & 1 & -1\\
    \end{bmatrix}.\]
    So the polytope $\Omega_{2,4}$ is a simplex. 
    Only the fifth column is not a vertex, 
    and it is the midpoint of the edge between the sixth and seventh columns.
\end{example}

The next technical lemma says that the point $p_{[\M]}$ and the symmetrized indicator function $\symindicator{\mathscr{P}(\M)}$ encode the same information, and that the ambient space $\mathbb{R}^{\mathcal{S}_{r,n}}$ of the polytope $\Omega_{r,n}$ is naturally isomorphic to the linear span of the symmetrized indicator functions of all matroids in $\mathcal{M}_{r,n}$.  

\begin{lemma}\label{lemma:symindicator}
The real vector space $V_{r,n}$ spanned by the functions $\symindicator{\mathscr{P}(\M)}$ for $\M\in\mathcal{M}_{r,n}$
is isomorphic to $\mathbb{R}^{\overline{\mathcal{S}}_{r,n}}$,
by an isomorphism which sends $\symindicator{\mathscr{P}(\M)}$ to $p_{[\M]}$ for all $\M\in\overline{\mathcal{M}}_{r,n}$.
\end{lemma}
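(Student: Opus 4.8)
The plan is to exhibit the map explicitly and check it is a well-defined linear isomorphism. The natural candidate is the assignment $\symindicator{\mathscr{P}(\M)} \mapsto p_{[\M]}$ extended linearly; the content of the lemma is that this is (i) well-defined on all of $V_{r,n}$, not merely on the spanning set, and (ii) a bijection. First I would observe that, by Corollary~\ref{coro:isomorphism-classes-schubert}, each $\symindicator{\mathscr{P}(\M)}$ lies in the subspace $W$ of $V_{r,n}$ spanned by the $\symindicator{\mathscr{P}(\mathsf{S})}$ for $\mathsf{S}\in\mathcal{S}_{r,n}$; since these Schubert symmetrized indicators themselves belong to $V_{r,n}$, in fact $V_{r,n}=W$. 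So it suffices to show that $\{\symindicator{\mathscr{P}(\mathsf{S})} : \mathsf{S}\in\mathcal{S}_{r,n}\}$ is linearly independent, for then $\dim V_{r,n} = |\mathcal{S}_{r,n}| = \dim\mathbb{R}^{\mathcal{S}_{r,n}}$, and the map sending the basis $\symindicator{\mathscr{P}(\mathsf{S})}$ to the standard basis vector $p_{[\mathsf{S}]}$ (the coordinate indicator, which one checks from Corollary~\ref{coro:isomorphism-classes-schubert} equals $e_{[\mathsf{S}]}$ since a Schubert matroid's cyclic chain lattice has the full chain as its unique coatom) is a well-defined isomorphism. That this isomorphism carries $\symindicator{\mathscr{P}(\M)}$ to $p_{[\M]}$ for every $\M$, labelled or not, is then immediate by linearity from the Schubert expansion \eqref{eq:hampe-iso-classes}, whose coefficients are by definition the coordinates of $p_{[\M]}$.

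For the linear independence of the symmetrized Schubert indicators, the cleanest route is to deduce it from the \emph{labelled} statement, i.e.\ from Theorem~\ref{thm:DF}: the functions $\indicator{\mathscr{P}(\mathsf{S})}$ for $\mathsf{S}\in\overline{\mathcal{S}}_{r,n}$ are linearly independent in the space of functions on $\mathbb{R}^n$, because they form a basis for the span of all $\indicator{\mathscr{P}(\M)}$, $\M\in\overline{\mathcal{M}}_{r,n}$. Suppose $\sum_{[\mathsf{S}]\in\mathcal{S}_{r,n}} c_{[\mathsf{S}]}\,\symindicator{\mathscr{P}(\mathsf{S})} = 0$. Unfolding the definition of the symmetrized indicator, this reads $\frac{1}{n!}\sum_{[\mathsf{S}]}c_{[\mathsf{S}]}\sum_{\sigma\in\mathfrak{S}_n}\indicator{\mathscr{P}(\sigma\mathsf{S})} = 0$, which is a linear relation among the labelled indicators $\indicator{\mathscr{P}(\mathsf{S}')}$ for $\mathsf{S}'\in\overline{\mathcal{S}}_{r,n}$ in which the coefficient of each $\mathsf{S}'$ is a positive multiple of $c_{[\mathsf{S}']}$ (here one uses that $\sigma\mathsf{S}$ is again a Schubert matroid, since the defining property "lattice of cyclic flats is a chain" is invariant under relabelling, and that distinct $[\mathsf{S}]$ contribute to disjoint sets of labelled Schubert matroids). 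By the linear independence of the labelled Schubert indicators, all $c_{[\mathsf{S}]}=0$.

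The main obstacle, such as it is, is bookkeeping rather than depth: one must make sure that the two spanning/independence claims are phrased in the same ambient space — functions $\mathbb{R}^n\to\mathbb{R}$ — and that passing from labelled to symmetrized indicators does not collapse distinct isomorphism classes (it does not, precisely because the $\mathfrak{S}_n$-orbits of labelled Schubert matroids partition $\overline{\mathcal{S}}_{r,n}$). One should also spell out carefully that $V_{r,n}$, a priori spanned by $\symindicator{\mathscr{P}(\M)}$ over \emph{all} $\M\in\mathcal{M}_{r,n}$, coincides with the span of just the Schubert ones; this is exactly Corollary~\ref{coro:isomorphism-classes-schubert}. Finally, to see that the isomorphism sends $\symindicator{\mathscr{P}(\mathsf{S})}$ to the \emph{standard basis vector} $p_{[\mathsf{S}]}$, note that for a Schubert matroid $\mathsf{S}$ the lattice $\mathscr{Z}(\mathsf{S})$ is itself a chain, so $\mathcal{C}_{\mathscr{Z}}(\mathsf{S})$ has a unique element $\mathrm{C}\neq\widehat{\mathbf 1}$ below the top, namely the full chain, with $\lambda_{\mathrm{C}}=1$ and $\mathsf{S}_{\mathrm{C}}=\mathsf{S}$; hence $p_{[\mathsf{S}]}=e_{[\mathsf{S}]}$, and the claimed isomorphism is the evident coordinate identification.
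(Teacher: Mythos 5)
Your proposal is correct, and it establishes the key point — injectivity, i.e.\ linear independence of the symmetrized Schubert indicators $\symindicator{\mathscr{P}(\mathsf{S})}$, $[\mathsf{S}]\in\mathcal{S}_{r,n}$ — by a genuinely different route than the paper. You unfold the symmetrization and use that the class of Schubert matroids is stable under relabelling while distinct isomorphism classes have disjoint $\mathfrak{S}_n$-orbits, so a vanishing combination of symmetrized Schubert indicators yields a vanishing combination of labelled Schubert indicators whose coefficients are positive multiples of yours; Theorem~\ref{thm:DF} then kills all coefficients. The paper instead builds an explicit inverse to the evident map $p_{[\mathsf{S}]}\mapsto\symindicator{\mathscr{P}(\mathsf{S})}$: it invokes the Derksen--Fink functions $s^{\mathrm{sym}}_{\underline X,\underline r}$, checks that they, and hence the dual basis $a_{[\mathsf{S}]}$ of valuative invariants to the Schubert classes, are linear functionals on $V_{r,n}$, and takes $\sum_{\mathsf{S}}a_{[\mathsf{S}]}\,p_{[\mathsf{S}]}$ as the inverse. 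Your argument is more self-contained, resting only on Theorem~\ref{thm:DF} and Corollary~\ref{coro:isomorphism-classes-schubert}, both already in the paper; the paper's argument additionally records that valuative invariants factor through symmetrized indicator functions, which is exactly the fact used later to identify linear functionals on the ambient space of $\Omega_{r,n}$ with valuative invariants.

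Two small points of precision. Your parenthetical that $\mathcal{C}_{\mathscr{Z}}(\mathsf{S})$ ``has a unique element $\mathrm{C}\neq\widehat{\mathbf{1}}$ below the top'' is not literally true (every subchain containing the extremes is an element); what is true is that the full chain is the unique coatom and $\lambda_{\mathrm{C}}=0$ for all proper subchains, which is most cleanly seen from the uniqueness in Theorem~\ref{thm:DF} applied to $\indicator{\mathscr{P}(\mathsf{S})}$ itself, or by a short M\"obius computation in a Boolean interval. Also, you prove the statement with ambient space $\mathbb{R}^{\mathcal{S}_{r,n}}$, which is the intended reading; the $\overline{\mathcal{S}}_{r,n}$ in the lemma as printed is a slip of notation.
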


\begin{proof}
Define a linear function 
$\phi:\mathbb{R}^{\overline{\mathcal{S}}_{r,n}}\to V_{r,n}$
by prescribing its values on the basis $\{p_{[\mathsf{S}]}:[\mathsf{S}]\in\mathcal{S}_{r,n}\}$ of the domain:
$\phi(p_{[\mathsf{S}]})=\symindicator{\mathscr{P}(\mathsf{S})}$.
By definition of $p_{[\M]}$, in fact 
$\phi(p_{[\M]})=\symindicator{\mathscr{P}(\M)}$ for all $\M$.

The functions called $s^{\mathrm{sym}}_{\underline X,\underline r}$ in \cite[Corollary 6.4]{derksen-fink}
are sums of $\mathfrak{S}_n$-orbits of valuations.
Expanding this sum and rewriting to move the $\sigma\in \mathfrak{S}_n$ from acting on the valuation
to acting on the indicator function,
we get an expression for $s^{\mathrm{sym}}_{\underline X,\underline r}(\M)$ as a linear function on $V_{r,n}$ applied to $\symindicator{\mathscr{P}(\M)}$.
Thus $s^{\mathrm{sym}}_{\underline X,\underline r}$ is a linear function on $V_{r,n}$.
Now the basis $\mathcal{S}_{r,n}$ of matroid isomorphism classes mod valuative relations
has a dual basis $\{a_{[\mathsf{S}]}:[\mathsf{S}]\in\mathcal S_{r,n}\}$ of valuative invariants;
these are linear combinations of the $s^{\mathrm{sym}}_{\underline X,\underline r}$,
and therefore they are also linear functions on $V_{r,n}$.
An inverse to $\phi$ is therefore given as
$\sum_{\mathsf S} a_{[\mathsf{S}]}p_{[\mathsf{S}]}$.
\end{proof}

\begin{remark}\label{rem:nonisomorphic-same-vertex}
    Two distinct isomorphism classes $[\M]$ and $[\N]$ in $\mathcal{M}_{r,n}$ may have the same symmetrized indicator function, and thus yield the same point $p_{[\M]}=p_{[\N]}\in \mathbb{R}^{\mathcal{S}_{r,n}}$. For example, consider the matroids with ground set $\{1,\ldots,6\}$ of rank $3$ and having set of bases:
    \begin{align*}
        \mathscr{B}(\M) &= \binom{[6]}{3} \smallsetminus \{\{1,2,3\}, \{4,5,6\}\},\\
        \mathscr{B}(\N) &= \binom{[6]}{3} \smallsetminus \{\{1,2,3\}, \{3,4,5\}\}.
    \end{align*}
    They are not isomorphic, yet Corollary~\ref{coro:isomorphism-classes-schubert} gives:
    \[p_{[\M]} = p_{[\N]} = 2\cdot 
        \begin{tikzpicture}[baseline=2.6mm,scale=0.25, line width=.5pt]
        \vspace*{10pt}
        \draw[line width=2pt,line cap=round] (0,0) -- (0,2) -- (1,2) -- (1,2) -- (1,3) -- (3,3);
        
        \draw (0,0) grid (3,3);
        \end{tikzpicture}
        - 1\cdot
        \begin{tikzpicture}[baseline=2.6mm,scale=0.25, line width=.5pt]
        
        \draw[line width=2pt,line cap=round] (0,0) -- (0,3) -- (3,3);
        
        \draw (0,0) grid (3,3);
        \end{tikzpicture},
    \]
    where on the right hand side the diagram for a Schubert matroid $\mathsf{S}$ means $p_{[\mathsf{S}]}\in\mathbb{R}^{\mathcal{S}_{r,n}}$.
\end{remark}

\subsection{The dimension of the polytopes} The first question one might ask of the two polytopes $\Omega_{r,n}$ and $\overline{\Omega}_{r,n}$ is their dimension.

\begin{theorem}\label{thm:dimensions-of-polytopes}
    The polytopes $\Omega_{r,n}$ and $\overline{\Omega}_{r,n}$ have codimension $1$. In other words,
        \begin{align*}
            \dim \Omega_{r,n} &= |\mathcal{S}_{r,n}| - 1,\\
            \dim \overline{\Omega}_{r,n} &= |\overline{\mathcal{S}}_{r,n}| - 1.
        \end{align*}
\end{theorem}

\begin{proof}
    Let us see this for the polytope $\overline{\Omega}_{r,n}$, as the proof for the other is almost identical. Every Schubert matroid in $\overline{\mathcal{S}}_{r,n}$ corresponds to a standard basis vector in the ambient space $\mathbb{R}^{\overline{\mathcal{S}}_{r,n}}$.
    This tells us that the dimension of $\overline{\Omega}_{r,n}$ is at least $|\overline{\mathcal{S}}_{r,n}|-1$. On the other hand, every point in $\overline{\Omega}_{r,n}$ has sum of coordinates equal to $1$, so the codimension is at least $1$. To prove the last assertion, it suffices to show that for every $\M\in \overline{\mathcal{M}}_{r,n}$ one has
    \[\sum_{\substack{\mathrm{C}\in\mathcal{C}_{\mathscr{Z}}(\M)\\\mathrm{C} \neq \widehat{\mathbf{1}}}} \lambda_{\mathrm{C}} = 1.\]
    This is immediately true either by the defining recursion of the M\"obius function, or by using Example~\ref{ex:constants-are-valuations} together with equation~\eqref{eq:hampe}.
\end{proof}

As mentioned earlier, Schubert matroids $\mathsf{S}\in \mathcal{S}_{r,n}$ can be seen up to isomorphism as the lattice path matroids whose lower path is the lower-right border of a grid. Since there are no restrictions on the upper path, 
we have
\[|\mathcal{S}_{r,n}| = \binom{n}{r}.\]
On the other hand, the number of (labelled) Schubert matroids on $n$ elements of rank $r$ is a 
\emph{binomial Eulerian number}, by \cite[Theorem~1.1]{stellahedral} plus, e.g., \cite[Section~10.4]{postnikov-reiner-williams}; 
see also the generating function in \cite[Theorem 1.5c]{derksen-fink}. 
We have
\[
        |\overline{\mathcal{S}}_{r,n}| = \sum_{j=0}^{n-r} \binom{n}{r+j} A_{r+j,j},
\]
where the $A_{m,i}$ are the usual \emph{Eulerian numbers}, i.e., $A_{m,i}$ is the number of permutations on $m$ elements that have exactly $i$ descents (see \cite{petersen}). This makes clear that the polytopes $\Omega_{r,n}$ and $\overline{\Omega}_{r,n}$ live in large-dimensional Euclidean spaces, and therefore their direct computation becomes infeasible very quickly, even for small values of $n$ and $r$. As an example, $\dim \Omega_{3,6} = 19$ and $\dim \overline{\Omega}_{3,6} = 882$.

\subsection{Integral points of the polytopes}

Our polytopes encode matroids through their lattice points, so it is natural to ask whether all the lattice points of our polytopes necessarily come from matroids. 

\begin{theorem}
    The only lattice points in $\overline{\Omega}_{r,n}$ are its vertices.
\end{theorem}

\begin{proof}
    Let us denote by $Z$ any fixed chain of sets of the form $\varnothing\subseteq Z_0\subsetneq\cdots\cdot\subsetneq Z_m \subseteq [n]$. Denote by $\mathbf{r}$ any fixed sequence of integers $r_0,\ldots,r_m$. Consider the map $\Phi_{Z,\mathbf{r}}:\overline{\mathcal{M}}_{r,n}\to \mathbb{R}$ taking values $0$ or $1$ according to whether each $Z_i$ has rank $r_i$ in $\M$. By \cite[Proposition~5.3]{derksen-fink}, this map is a valuation. Consider the set $U$ of all possible pairs $(Z,\mathbf{r})$ as above, and the valuative function $\Phi:\mathcal{M}_{r,n} \to \mathbb{R}^{U}$ whose coordinates are given by each $\Phi_{Z,\mathbf{r}}$ as before. By \cite[Corollary~5.6]{derksen-fink}, the valuations of the form $\Phi_{Z,\mathbf{r}}$ span the space of valuative functions on $\mathcal{M}_{r,n}$. Therefore, the function $\Phi$ can be viewed as an injective map $\Phi:\mathbb{R}^{\overline{\mathcal{S}}_{r,n}} \to \mathbb{R}^{U}$ that sends every lattice point of $\mathbb{R}^{\overline{\mathcal{S}}_{r,n}}$ to some lattice point in $\mathbb{R}^U$. However, since every vertex of $\overline{\Omega}_{r,n}$ is sent to a $\{0,1\}$-vector in $\mathbb{R}^{U}$, the image of $\overline{\Omega}_{r,n}$ under this map is a subpolytope of the unit cube in $\mathbb{R}^U$, and hence its only lattice points are its vertices. This immediately implies that $\overline{\Omega}_{r,n}$ does not contain other lattice points than its vertices.
\end{proof}

A slight modification of the strategy employed in the last proof was suggested to us by Benjamin Schr\"oter in private communication. The above proof in fact also yields that every point $\overline{p}_{\M}$ for $\M\in \mathcal{M}_{r,n}$ is a vertex of $\overline{\Omega}_{r,n}$, a result that we will prove below, in Theorem~\ref{thm:vertices-omegabar}, using a much simpler approach that does not rely on the valuativity of the map $\Phi$.

Notice that the same argument used in the last proof will not apply for the polytope $\Omega_{r,n}$, because we know that it contains lattice points other than its vertices (see Example~\ref{example:schuberts-4-2}, and Example~\ref{ex:T24-non-extremal} in the next section). However, a natural question to ask is whether all the lattice points in $\Omega_{r,n}$ do come from matroids. The answer to that question is negative, as can be verified with the use of a computer.

\begin{theorem}\label{thm:lattice-points-not-matroids}
    The polytope $\Omega_{3,6}$ contains $5$ lattice points which do not represent matroids.
\end{theorem}

For the interested reader, we indicate below what the $5$ lattice points are in the Schubert basis.

\begin{itemize}[itemsep=10pt]
\item[] $p_1 = 2\cdot 
    \begin{tikzpicture}[baseline=2.4mm,scale=0.30, line width=.5pt]
        \vspace*{10pt}
        \draw[line width=2pt,line cap=round] (0,0) -- (0,1) -- (1,1) -- (1,2) -- (2,2) -- (2,3) -- (3,3);
        
        \draw (0,0) grid (3,3);
        \end{tikzpicture} - \begin{tikzpicture}[baseline=2.4mm,scale=0.30, line width=.5pt]
        \vspace*{10pt}
        \draw[line width=2pt,line cap=round] (0,0) -- (0,2) -- (2,2) -- (2,3) -- (3,3);
        
        \draw (0,0) grid (3,3);
        \end{tikzpicture} - \begin{tikzpicture}[baseline=2.4mm,scale=0.30, line width=.5pt]
        \vspace*{10pt}
        \draw[line width=2pt,line cap=round] (0,0) -- (0,1) -- (1,1) -- (1,3) -- (3,3);
        
        \draw (0,0) grid (3,3);
        \end{tikzpicture} + \begin{tikzpicture}[baseline=2.4mm,scale=0.30, line width=.5pt]
        \vspace*{10pt}
        \draw[line width=2pt,line cap=round] (0,0) -- (0,3) -- (3,3);
        
        \draw (0,0) grid (3,3);
        \end{tikzpicture} $,
    \item[] $p_2 = 2\cdot \begin{tikzpicture}[baseline=2.4mm,scale=0.30, line width=.5pt]
        \vspace*{10pt}
        \draw[line width=2pt,line cap=round] (0,0) -- (0,1) -- (1,1) -- (1,2) -- (2,2) -- (2,3) -- (3,3);
        
        \draw (0,0) grid (3,3);
        \end{tikzpicture} - \begin{tikzpicture}[baseline=2.4mm,scale=0.30, line width=.5pt]
        \vspace*{10pt}
        \draw[line width=2pt,line cap=round] (0,0) -- (0,2) -- (2,2) -- (2,3) -- (3,3);
        
        \draw (0,0) grid (3,3);
        \end{tikzpicture} - \begin{tikzpicture}[baseline=2.4mm,scale=0.30, line width=.5pt]
        \vspace*{10pt}
        \draw[line width=2pt,line cap=round] (0,0) -- (0,1) -- (1,1) -- (1,3) -- (3,3);
        
        \draw (0,0) grid (3,3);
        \end{tikzpicture} + \begin{tikzpicture}[baseline=2.4mm,scale=0.30, line width=.5pt]
        \vspace*{10pt}
        \draw[line width=2pt,line cap=round] (0,0) -- (0,2) -- (1,2) -- (1,2) -- (1,3) -- (3,3);
        
        \draw (0,0) grid (3,3);
        \end{tikzpicture}$,
    \item[] $p_3 = 2\cdot \begin{tikzpicture}[baseline=2.4mm,scale=0.30, line width=.5pt]
        \vspace*{10pt}
        \draw[line width=2pt,line cap=round] (0,0) -- (0,1) -- (1,1) -- (1,2) -- (2,2) -- (2,3) -- (3,3);
        
        \draw (0,0) grid (3,3);
        \end{tikzpicture} - \begin{tikzpicture}[baseline=2.4mm,scale=0.30, line width=.5pt]
        \vspace*{10pt}
        \draw[line width=2pt,line cap=round] (0,0) -- (0,2) -- (2,2) -- (2,3) -- (3,3);
        
        \draw (0,0) grid (3,3);
        \end{tikzpicture} - \begin{tikzpicture}[baseline=2.4mm,scale=0.30, line width=.5pt]
        \vspace*{10pt}
        \draw[line width=2pt,line cap=round] (0,0) -- (0,1) -- (1,1) -- (1,3) -- (3,3);
        
        \draw (0,0) grid (3,3);
        \end{tikzpicture} + 2\cdot \begin{tikzpicture}[baseline=2.4mm,scale=0.30, line width=.5pt]
        \vspace*{10pt}
        \draw[line width=2pt,line cap=round] (0,0) -- (0,2) -- (1,2) -- (1,2) -- (1,3) -- (3,3);
        
        \draw (0,0) grid (3,3);
        \end{tikzpicture} -  \begin{tikzpicture}[baseline=2.4mm,scale=0.30, line width=.5pt]
        \vspace*{10pt}
        \draw[line width=2pt,line cap=round] (0,0) -- (0,3) -- (3,3);
        
        \draw (0,0) grid (3,3);
        \end{tikzpicture}$,
    \item[]  $p_4 = 4\cdot 
    \begin{tikzpicture}[baseline=2.4mm,scale=0.30, line width=.5pt]
        \vspace*{10pt}
        \draw[line width=2pt,line cap=round] (0,0) -- (0,1) -- (1,1) -- (1,2) -- (2,2) -- (2,3) -- (3,3);
        
        \draw (0,0) grid (3,3);
        \end{tikzpicture} -2 \cdot \begin{tikzpicture}[baseline=2.4mm,scale=0.30, line width=.5pt]
        \vspace*{10pt}
        \draw[line width=2pt,line cap=round] (0,0) -- (0,2) -- (2,2) -- (2,3) -- (3,3);
        
        \draw (0,0) grid (3,3);
        \end{tikzpicture} - 2\cdot \begin{tikzpicture}[baseline=2.4mm,scale=0.30, line width=.5pt]
        \vspace*{10pt}
        \draw[line width=2pt,line cap=round] (0,0) -- (0,1) -- (1,1) -- (1,3) -- (3,3);
        
        \draw (0,0) grid (3,3);
        \end{tikzpicture} + \begin{tikzpicture}[baseline=2.4mm,scale=0.30, line width=.5pt]
        \vspace*{10pt}
        \draw[line width=2pt,line cap=round] (0,0) -- (0,3) -- (3,3);
        
        \draw (0,0) grid (3,3);
        \end{tikzpicture}$,
    \item[] $p_5 = 4\cdot \begin{tikzpicture}[baseline=2.4mm,scale=0.30, line width=.5pt]
        \vspace*{10pt}
        \draw[line width=2pt,line cap=round] (0,0) -- (0,1) -- (1,1) -- (1,2) -- (2,2) -- (2,3) -- (3,3);
        
        \draw (0,0) grid (3,3);
        \end{tikzpicture} - 2\cdot \begin{tikzpicture}[baseline=2.4mm,scale=0.30, line width=.5pt]
        \vspace*{10pt}
        \draw[line width=2pt,line cap=round] (0,0) -- (0,2) -- (2,2) -- (2,3) -- (3,3);
        
        \draw (0,0) grid (3,3);
        \end{tikzpicture} - 2\cdot \begin{tikzpicture}[baseline=2.4mm,scale=0.30, line width=.5pt]
        \vspace*{10pt}
        \draw[line width=2pt,line cap=round] (0,0) -- (0,1) -- (1,1) -- (1,3) -- (3,3);
        
        \draw (0,0) grid (3,3);
        \end{tikzpicture} + \begin{tikzpicture}[baseline=2.4mm,scale=0.30, line width=.5pt]
        \vspace*{10pt}
        \draw[line width=2pt,line cap=round] (0,0) -- (0,2) -- (1,2) -- (1,2) -- (1,3) -- (3,3);
        
        \draw (0,0) grid (3,3);
        \end{tikzpicture}$.
    \end{itemize}

\section{Vertices of the polytopes: Extremal matroids}\label{sec:four}

Now that we have defined our polytopes $\overline{\Omega}_{r,n}$ and $\Omega_{r,n}$, it is natural to ask about their facial structures. As we will see in this section, their sets of vertices behave differently. In the case of $\overline{\Omega}_{r,n}$ every labelled matroid yields a distinct vertex. For the polytope $\Omega_{r,n}$ the situation is considerably subtler: some isomorphism classes of matroids fall together in the polytope, and not all isomorphism classes correspond to vertices. The following is a direct consequence of our definition of the polytopes $\overline{\Omega}_{r,n}$ and $\Omega_{r,n}$.

\begin{proposition}
    The linear functions on the ambient space of $\overline{\Omega}_{r,n}$ are the valuative functions on $\overline{\mathcal{M}}_{r,n}$. The linear functions on the ambient space of $\Omega_{r,n}$ are the valuative invariants on $\mathcal{M}_{r,n}$. 
\end{proposition}

Therefore, characterizing vertices and, more generally, faces of these polytopes
is a matter of finding valuative functions or invariants
that attain their maximum exactly at prescribed points 
$\overline{p}_{\M}$, respectively $p_{[\M]}$.

\subsection{The vertices of \texorpdfstring{$\overline{\Omega}_{r,n}$}{barOmega(r,n)}}

\begin{theorem}\label{thm:vertices-omegabar}
    For every matroid $\M\in\overline{\mathcal{M}}_{r,n}$, 
    the point $\overline{p}_{\M}$ is a vertex of $\overline{\Omega}_{r,n}$. 
\end{theorem}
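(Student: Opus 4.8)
The plan is to realize each point $\overline{p}_{\M}$ as the unique maximizer over $\overline{\Omega}_{r,n}$ of a suitable linear functional. As the text observes, linear functionals on $\mathbb{R}^{\overline{\mathcal{S}}_{r,n}}$ are exactly the valuative functions $\overline{\mathcal{M}}_{r,n}\to\mathbb{R}$ (Theorem~\ref{thm:DF}): a valuative $g$ corresponds to the functional $\ell$ with $\ell(\overline{p}_{\N})=g(\N)$ for all $\N$, because the relation $\indicator{\mathscr{P}(\N)}=\sum_{\mathsf S}a_{\mathsf S}\indicator{\mathscr{P}(\mathsf S)}$ forces $g(\N)=\sum_{\mathsf S}a_{\mathsf S}\,g(\mathsf S)$. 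Since $\overline{\Omega}_{r,n}=\operatorname{conv}(\overline{p}_{\N}:\N\in\overline{\mathcal{M}}_{r,n})$, it then suffices to produce, for each fixed $\M$, a valuative function $g_{\M}$ on $\overline{\mathcal{M}}_{r,n}$ with $g_{\M}(\N)<g_{\M}(\M)$ for every $\N\neq\M$; the corresponding functional is then maximized over the polytope only at $\overline{p}_{\M}$, which is therefore a vertex.

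The valuative functions I would use are point evaluations of base-polytope indicator functions. For a fixed $x\in\mathbb{R}^n$, set $f_x(\N):=\indicator{\mathscr{P}(\N)}(x)$. Evaluating any valuative relation $\sum_i a_i\indicator{\mathscr{P}(\N_i)}=0$ at the single point $x$ yields $\sum_i a_i f_x(\N_i)=0$, so each $f_x$ is valuative in the sense of Definition~\ref{def:valuation}, and hence so is every finite $\mathbb{R}$-linear combination of such functions. I would then restrict attention to the vertices $e_B\in\{0,1\}^n$ of the hypersimplex, one for each $B\in\binom{[n]}{r}$: since $\mathscr{P}(\N)$ is contained in that hypersimplex and $e_B$ is one of its vertices, we have $e_B\in\mathscr{P}(\N)$ if and only if $B\in\mathscr{B}(\N)$, so $f_{e_B}(\N)=[\,B\in\mathscr{B}(\N)\,]$.

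Now for fixed $\M$ put $g_{\M}:=\sum_{B\in\binom{[n]}{r}}c_B\,f_{e_B}$ with $c_B=1$ if $B\in\mathscr{B}(\M)$ and $c_B=-1$ otherwise. Then
\[
 g_{\M}(\N)=\bigl|\mathscr{B}(\N)\cap\mathscr{B}(\M)\bigr|-\bigl|\mathscr{B}(\N)\setminus\mathscr{B}(\M)\bigr|\leq\bigl|\mathscr{B}(\M)\bigr|=g_{\M}(\M),
\]
and equality in the middle inequality forces both $\mathscr{B}(\M)\subseteq\mathscr{B}(\N)$ and $\mathscr{B}(\N)\subseteq\mathscr{B}(\M)$, hence $\N=\M$. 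Thus $g_{\M}$ is valuative and uniquely maximized at $\M$, which completes the argument via the reduction of the first paragraph.

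I do not expect a genuine obstacle here: the entire content is the observation that the point-evaluation $\indicator{\mathscr{P}(\cdot)}(x)$ is a valuation, which is immediate, together with the elementary identification $f_{e_B}(\N)=[\,B\in\mathscr{B}(\N)\,]$. This is the ``much simpler approach'' alluded to after the proposition on lattice points of $\overline{\Omega}_{r,n}$ — it sidesteps the Derksen--Fink spanning theorem for valuations entirely. The only point requiring a little care in the write-up is the passage between valuative functions and linear functionals, which is handled by Theorem~\ref{thm:DF}.
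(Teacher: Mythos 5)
Your proposal is correct and is essentially the paper's own argument: both use the basis-indicator valuations $f_B(\N)=\indicator{\mathscr{P}(\N)}(e_B)$ (point evaluation at the hypersimplex vertex $e_B$) and a signed count of agreements between $\mathscr{B}(\N)$ and $\mathscr{B}(\M)$ that is uniquely maximized at $\M$, then pass to linear functionals via the Schubert expansion. The paper's functional $\sum_{B\in\mathscr{B}(\M)}f_B+\sum_{B\notin\mathscr{B}(\M)}(1-f_B)$ differs from your $g_{\M}$ only by the additive constant $\binom{n}{r}-|\mathscr{B}(\M)|$ (the constant terms being valuative by Example~\ref{ex:constants-are-valuations}), so the two proofs coincide in substance.
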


\begin{proof}
    Let us fix $\M$ and consider its sets of bases $\mathscr{B}(\M)$ and of non-bases $\mathscr{B}(\M)^{\mathrm{c}} = \binom{[n]}{r} \smallsetminus \mathscr{B}(\M)$. Define the function $g:\overline{\mathcal{M}}_{r,n}\to \mathbb{Z}$ by 
    \[ g(\N) = \sum_{B\in\mathscr{B}(\M)} f_B(\N) + \sum_{B\in \mathscr{B}(\M)^{\mathrm{c}}} (1-f_B(\N)).\]
    where $f_B$ is the basis indicator function defined in Example~\ref{ex:compendium}. By Example~\ref{ex:constants-are-valuations}, $g$ is a sum of valuations and therefore a valuation itself. Notice that $g(\M) = \binom{n}{r}$ while $g(\N) < \binom{n}{r}$ for every matroid $\N\in \overline{\mathcal{M}}_{r,n}$ such that $\N\neq \M$. This shows that $\M$ corresponds to a vertex of $\overline{\Omega}_{r,n}$.
\end{proof}

\subsection{The vertices of \texorpdfstring{$\Omega_{r,n}$}{Omega(r,n)} and first examples of extremal matroids}

We now turn our attention to the vertices of the polytopes $\Omega_{r,n}$. 
Their subtler behaviour motivates us to name the matroids lying at these vertices.

\begin{definition}
    A matroid $\M$ of rank $r$ on $n$ elements is said to be \emph{extremal} if $p_{[\M]}$ is a vertex of the polytope $\Omega_{r,n}$.
\end{definition}

We once again put emphasis on the fact that vertices of $\Omega_{r,n}$ correspond a~priori to \emph{sets} of isomorphism classes of matroids: 
Remark~\ref{rem:nonisomorphic-same-vertex} shows that distinct elements of $\mathcal{M}_{r,n}$ can produce the same point in $\mathbb{R}^{\mathcal{S}_{r,n}}$. In particular, if $p_{[\M]} = p_{[\N]}$ for two different isomorphism classes $\M$ and $\N$, we have that $\M$ is extremal if and only if $\N$ is extremal. See Table~\ref{table:number-matroids} where we list on the left the number of vertices of $\Omega_{r,n}$ and on the right the number of classes of isomorphism of extremal matroids on $[n]$ having rank $r$, for $0\leq r\leq n\leq 8$. Observe that for $n=8$ and $r=3,5$ the numbers in the two tables differ, which means that in the polytopes $\Omega_{3,8}$ and $\Omega_{5,8}$ there are vertices that correspond to more than one isomorphism class of matroids. We have been unable to compute the numbers for $(r,n)=(4,8)$.

{\footnotesize
\begin{table}\label{table:}
    \begin{subtable}[ht]{.5\textwidth}
        \begin{tabular}{l r r r r r r r r}\hline
        $r\backslash n$   & 1   & 2 & 3 & 4 & 5 & 6 & 7& 8   \\ \hline
        0 &   1 & 1 & 1 & 1 & 1 & 1 & 1 & 1\\
        1 &   1 & 2 & 3 & 4 & 5 & 6 & 7 & 8\\
        2 &    & 1 & 3 & 6 & 11 & 17 & 27 & 38\\
        3 &    &  & 1 & 4 & 11 & 28 & 64  & 145\\
        4 &    &  &  & 1 & 5 & 17 & 64 & ? \\
        5 &    &  &  &  & 1 & 6 & 27 & 145\\
        6 &    &  &  &  &  & 1 & 7 & 38\\
        7 &    &  &  &  &  &  & 1  & 8 \\
        8 &    &  &  &  &  &  &    & 1
        \end{tabular}
    \end{subtable}
    \begin{subtable}[t]{.5\textwidth}
        
        \begin{tabular}{l r r r r r r r r}\hline
        $r\backslash n$   & 1   & 2 & 3 & 4 & 5 & 6 & 7& 8   \\ \hline
        0 &   1 & 1 & 1 & 1 & 1 & 1 & 1 & 1\\
        1 &   1 & 2 & 3 & 4 & 5 & 6 & 7 & 8\\
        2 &    & 1 & 3 & 6 & 11 & 17 & 27 & 38\\
        3 &    &  & 1 & 4 & 11 & 28 & 64  & 148\\
        4 &    &  &  & 1 & 5 & 17 & 64 & ? \\
        5 &    &  &  &  & 1 & 6 & 27 & 148\\
        6 &    &  &  &  &  & 1 & 7 & 38\\
        7 &    &  &  &  &  &  & 1  & 8 \\
        8 &    &  &  &  &  &  &    & 1
        \end{tabular}
    \end{subtable}
    \caption{On the left the number of vertices of $\Omega_{r,n}$. On the right the number of isomorphism classes of extremal matroids.}
    \label{table:number-matroids}
\end{table}
}

We will be focusing on isomorphism classes of matroids throughout this section, but we will often write $\M$ instead of $[\M]$ to avoid overloading the notation. The following result is our workhorse tool for showing that given isomorphism classes of matroids are indeed extremal.

\begin{lemma}\label{lemma:extremal-sequence-invariants}
    Let $f_1,\ldots,f_m$ be a sequence of valuative invariants on $\mathcal{M}_{r,n}$. Consider the following sets of isomorphism classes of matroids:
        \begin{align*}
        S_0 &= \mathcal{M}_{r,n},\\
        S_1 &= \left\{\M\in S_0 \;: f_1(\M) = \max_{\N\in S_0} f_1(\N)\right\},\\
        S_2 &= \left\{\M\in S_1 \;: f_2(\M) = \max_{\N\in S_1} f_2(\N)\right\},\\
        &\enspace \vdots\\
        S_m &= \left\{\M\in S_{m-1} : f_m(\M) = \max_{\N\in S_{m-1}} f_m(\N)\right\}.
        \end{align*}
    If all the elements in $S_m$ yield the same point $p_{[\M]}\in \mathbb{R}^{\mathcal{S}_{r,n}}$, then $p_{[\M]}$ is a vertex and all the elements in $S_m$ correspond to isomorphism classes of extremal matroids.
\end{lemma}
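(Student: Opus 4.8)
The plan is to manufacture from $f_1,\ldots,f_m$ a single valuative invariant $F$ on $\mathcal{M}_{r,n}$ whose maximum over $\mathcal{M}_{r,n}$ is attained exactly at the classes in $S_m$. Since a positive linear combination of valuative invariants is again a valuative invariant, and $\mathcal{M}_{r,n}$ is finite, it is enough to choose the coefficients of $F$ in a lexicographic ``big-$M$'' fashion. Once this is done, Lemma~\ref{lemma:symindicator} lets us view $F$ as a linear functional $\ell$ on the ambient space $\mathbb{R}^{\mathcal{S}_{r,n}}$ of $\Omega_{r,n}$, with $F(\M)=\ell(p_{[\M]})$ for all $\M$; the invariant $F$ will then certify that the common point $p_{[\M]}$ of the classes in $S_m$ is a vertex.

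For the construction, write $M_i=\max_{\N\in S_{i-1}}f_i(\N)$, the value that cuts out $S_i$ inside $S_{i-1}$; let $\delta_i$ be the minimum of $M_i-f_i(\N)$ over $\N\in S_{i-1}\smallsetminus S_i$, which is automatically positive, with the convention $\delta_i=1$ if that set is empty; and let $R_i=\max_{\N\in\mathcal{M}_{r,n}}f_i(\N)-\min_{\N\in\mathcal{M}_{r,n}}f_i(\N)$. Put $\lambda_m=1$ and, by downward induction on $i$, choose a positive real $\lambda_i$ with $\lambda_i\delta_i>\sum_{j>i}\lambda_j R_j$. Set $F=\lambda_1 f_1+\cdots+\lambda_m f_m$.

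It remains to verify that $F$ is maximized over $\mathcal{M}_{r,n}$ exactly at the classes in $S_m$. Every $\M\in S_m$ lies in each $S_i$, hence $f_i(\M)=M_i$ for all $i$, so $F$ is constant on $S_m$ with value $F^{\star}=\sum_i\lambda_i M_i$. For a class $\N\notin S_m$, let $k$ be the least index with $\N\notin S_k$; then $\N\in S_{k-1}\smallsetminus S_k$, so $f_i(\N)=M_i$ for $i<k$ while $f_k(\N)\le M_k-\delta_k$. Therefore $F^{\star}-F(\N)=\sum_{i\ge k}\lambda_i\bigl(M_i-f_i(\N)\bigr)\ge\lambda_k\delta_k-\sum_{i>k}\lambda_i R_i>0$. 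Translating to $\ell$, this gives $\ell(p_{[\N]})<\ell(p_{[\M]})=F^{\star}$ for every class $\N$ with $p_{[\N]}\ne p_{[\M]}$ (where $\M\in S_m$). Since any point of $\Omega_{r,n}$ is a convex combination of the points $p_{[\N]}$, it follows that $p_{[\M]}$ is the unique maximizer of $\ell$ over $\Omega_{r,n}$, hence a (zero-dimensional) face, i.e.\ a vertex; so every class in $S_m$ is extremal.

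The whole argument is a lexicographic separation, and the only step that needs genuine care is the inductive choice of the $\lambda_i$ together with the telescoping estimate in the last paragraph showing that this choice really isolates $S_m$ from its complement; everything else — finiteness of $\mathcal{M}_{r,n}$, the fact that the construction is linear in the $f_i$, and the identification of valuative invariants with linear functionals on $\mathbb{R}^{\mathcal{S}_{r,n}}$ — is immediate from the preceding material.
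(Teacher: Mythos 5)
Your proof is correct and follows essentially the same route as the paper: you identify valuative invariants with linear functionals on $\mathbb{R}^{\mathcal{S}_{r,n}}$ (so that $F(\N)=\ell(p_{[\N]})$) and observe that $S_m$ is the outcome of a lexicographic maximization, so its common point is the unique maximizer of a linear functional over $\Omega_{r,n}$, hence a vertex. The only difference is that the paper simply cites the standard fact that vertices are exactly the points obtained by maximizing a sequence of linear functionals, whereas you prove that fact explicitly by aggregating $f_1,\ldots,f_m$ into a single functional with lexicographic (big-$M$) weights; your choice of the $\lambda_i$ and the telescoping estimate are carried out correctly, so this is just a self-contained rendering of the same argument.
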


\begin{proof}
    The proof is immediate from the fact that the vertices of any polytope can be characterized as the points that maximize a sequence of linear functionals. In the case of $\Omega_{r,n}$, linear functionals on $\mathbb{R}^{\mathcal{S}_{r,n}}$ correspond to valuative invariants on $\mathcal{M}_{r,n}$.
\end{proof}

Of course, minimizing an invariant $f_i$ is the same as maximizing $-f_i$, and the sign change preserves valuativity.
So when we apply this lemma we can freely choose to minimize valuations as well as to maximize them.

\begin{example}\label{example:uniforms-are-extremal}
    Uniform matroids are extremal. We have a valuative invariant $f:\mathcal{M}_{r,n}\to\mathbb{R}$ given by $\M\mapsto |\mathscr{B}(\M)|$, i.e., the number of bases of the matroid or, equivalently, the number of integer points in the base polytope $\mathscr{P}(\M)$ (see Example~\ref{ex:compendium}). 
    The unique class $\M\in \mathcal{M}_{r,n}$ for which this valuation attains its maximum value is the uniform matroid $\U_{r,n}$, which is therefore a vertex of $\Omega_{r,n}$.
\end{example}

\begin{example}\label{ex:T24-non-extremal}
    Example~\ref{example:schuberts-4-2} shows that the minimal matroid $\mathsf{T}_{2,4}$, which is the Schubert matroid with lattice path drawing
    $
        \begin{tikzpicture}[scale=0.30, line width=.5pt, baseline=2.0mm]
        
        \draw[line width=2pt,line cap=round] (0,0) -- (0,1) -- (1,1) -- (1,2) -- (2,2);
        
        \draw (0,0) grid (2,2);
        \end{tikzpicture},    
    $
    is not extremal\footnote{Recall that minimal matroids minimize the number of bases (which is a valuative invariant) but only among \emph{connected} matroids.}. In fact it is the smallest non-extremal matroid. 
%
%
\end{example}

\begin{example}
    The matroid $\U_{r,r} \oplus \U_{0,n-r} \in \mathcal{M}_{r,n}$ is extremal. Notice that this matroid has only one basis, and that any other isomorphism class of matroids has at least $2$ bases. As was said in Example~\ref{example:uniforms-are-extremal}, the number of bases is a valuative invariant, and since $\U_{r,r}\oplus \U_{0,n-r}$ is the only minimizer up to isomorphism, it is extremal.
\end{example}

\begin{example}\label{example:thickening-uniform}
    To see a less straightforward application of Lemma~\ref{lemma:extremal-sequence-invariants}, 
    consider the matroid $\U_{r,n}^{(2)}\in \mathcal{M}_{r,2n}$ obtained from $\U_{r,n}$ by adding a parallel copy to each element of the ground set. Let us show that when $r > 1$ this matroid is extremal. Consider the valuative invariant
    \begin{align*}
        f_1(\M) = \sum_{\substack{1\le i\le r\\i\neq 2}} \#\{C \text{ circuit of $\M$} : |C| = i\}.
    \end{align*}
    (The valuativity of this invariant follows from Example~\ref{ex:compendium}.)
    The minimum value that $f_1(\M)$ could possibly attain is $0$, and this happens precisely when $\M$ has no loops nor circuits of sizes $3,\ldots,r$. This is equivalent to the simplified matroid $\operatorname{si}(\M)$ not having circuits of sizes $1,\ldots,r$, i.e., to $\operatorname{si}(\M)$ being uniform. In other words, the set $S_1$ defined as in Lemma~\ref{lemma:extremal-sequence-invariants} (with a $\min$ instead of a $\max$) is precisely the set of isomorphism classes of matroids $\M$ whose simplification is uniform. Now, by Example~\ref{ex:compendium} we may also consider the valuative invariant
    \[f_2(\M) := \sum_{\substack{1\le i\le r\\i\neq 2}} \#\{F\in \mathcal{L}(\M): \rk(F) = 1,\, |F| = i\}. \]
    The minimum value that this invariant attains is $0$, and this happens if and only if all the flats of rank $1$ have size $2$. It is immediate to conclude that the set $S_2$ that would result from Lemma~\ref{lemma:extremal-sequence-invariants} has actually only one isomorphism class, and it is precisely $\U_{r,n}^{(2)}$. In particular, this is an extremal matroid.
\end{example}

\subsection{Extremality and direct sums} The class of extremal matroids is not closed under minors: 
for example, the non-extremal $\mathsf{T}_{2,4}$ is a restriction of the extremal $\mathsf{U}^{(2)}_{2,3}$. 
On the other hand, matroid duality does preserve extremality because in $p_{[\M]}$ the coordinate indexed by some $[\mathsf{S}]\in \mathcal{S}_{r,n}$ coincides with the coordinate indexed by $[\mathsf{S}^*]\in \mathcal{S}_{n-r,n}$ in $p_{[\M^*]}$. It is natural to inquire about the preservation of extremality under other basic operations such as direct sums.

\begin{theorem}
    If a direct sum $\M = \M_1\oplus \cdots \oplus \M_s$ is extremal, then each direct summand $\M_1,\ldots,\M_s$ is extremal. 
\end{theorem}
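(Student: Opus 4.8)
The plan is to use a contrapositive-style argument together with Lemma~\ref{lemma:extremal-sequence-invariants}: we want to show that if $\M_1$ (say) is \emph{not} extremal, then $\M=\M_1\oplus\cdots\oplus\M_s$ is not extremal either. The starting point is the observation that direct sums interact well with Schubert decompositions. Indeed, the base polytope of a direct sum is the product of the base polytopes of the summands, $\mathscr P(\M_1\oplus\M_2)=\mathscr P(\M_1)\times\mathscr P(\M_2)$, so indicator functions multiply: $\indicator{\mathscr P(\M_1\oplus\M_2)}=\indicator{\mathscr P(\M_1)}\cdot\indicator{\mathscr P(\M_2)}$ (as functions on $\mathbb R^{n_1}\times\mathbb R^{n_2}$). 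Moreover a direct sum of Schubert matroids is again a Schubert matroid: the lattice of cyclic flats of $\mathsf S_1\oplus\mathsf S_2$ is the "ordinal-sum-like" gluing of the two chains, again a chain. So if $\symindicator{\mathscr P(\M_1)}=\sum_{[\mathsf S]} a_{[\mathsf S]}\,\symindicator{\mathscr P(\mathsf S)}$ and similarly for the other summands, multiplying out and symmetrizing expresses $\symindicator{\mathscr P(\M)}$ as an integer combination of $\symindicator{\mathscr P(\mathsf S)}$ for Schubert matroids $\mathsf S$, and the coefficients are bilinear (more precisely multilinear) in the coefficient vectors $p_{[\M_i]}$.

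The key consequence is this: \emph{the map sending $(p_{[\M_1]},\dots,p_{[\M_s]})$ to $p_{[\M_1\oplus\cdots\oplus\M_s]}$ is (the restriction of) a multilinear map}, hence for fixed $\M_2,\dots,\M_s$ the map $p_{[\M_1]}\mapsto p_{[\M_1\oplus\M_2\oplus\cdots\oplus\M_s]}$ is the restriction of a \emph{linear} map $L:\mathbb R^{\mathcal S_{r_1,n_1}}\to\mathbb R^{\mathcal S_{r,n}}$. Now suppose $\M_1$ is not extremal, i.e.\ $p_{[\M_1]}$ is not a vertex of $\Omega_{r_1,n_1}$. Then $p_{[\M_1]}$ is a convex combination $\sum_k t_k\, p_{[\N_k]}$ of points associated to other isomorphism classes $\N_k\in\mathcal M_{r_1,n_1}$ with $t_k>0$ and at least two distinct points appearing (this uses that $\Omega_{r_1,n_1}$ is the convex hull of the $p_{[\N]}$, so a non-vertex is a proper convex combination of vertices). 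Applying the linear map $L$ gives
\[
p_{[\M]} \;=\; L\big(p_{[\M_1]}\big) \;=\; \sum_k t_k\, L\big(p_{[\N_k]}\big) \;=\; \sum_k t_k\, p_{[\N_k\oplus\M_2\oplus\cdots\oplus\M_s]},
\]
a convex combination of points in $\Omega_{r,n}$. To conclude that $p_{[\M]}$ is not a vertex, it remains to check that not all the points $p_{[\N_k\oplus\M_2\oplus\cdots\oplus\M_s]}$ coincide with $p_{[\M]}$ — equivalently, that $L$ does not collapse the relevant segment to a point.

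The main obstacle is exactly that last injectivity-type point: a priori $L$ could identify $p_{[\N_k]}$ with $p_{[\M_1]}$ for all $k$ even though the $p_{[\N_k]}$ are genuinely distinct in $\mathbb R^{\mathcal S_{r_1,n_1}}$. I would handle this by showing $L$ is injective, or at least injective on the relevant affine hull. One clean way: exhibit a left inverse built from a valuative invariant that "reads off" a single summand — for instance, one can use connected-component or Tutte-polynomial-type valuative invariants of $\M$ to recover valuative data of $\M_1$ when $\M_2,\dots,\M_s$ are held fixed (the multiplicativity of the Tutte polynomial under direct sums, $T_{\M_1\oplus\M_2}=T_{\M_1}\cdot T_{\M_2}$, together with the fact that the Tutte polynomial is a valuative invariant whose coefficients span a large subspace of invariants, makes this plausible). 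Concretely, since $\symindicator{\mathscr P(\M_2\oplus\cdots\oplus\M_s)}$ is a nonzero function, the map $g\mapsto g\cdot\symindicator{\mathscr P(\M_2\oplus\cdots\oplus\M_s)}$ on symmetrized indicator functions is injective, and by Lemma~\ref{lemma:symindicator} this injectivity transports to injectivity of $L$ on the span of the $p_{[\N]}$. Granting that, the $p_{[\N_k\oplus\M_2\oplus\cdots]}$ are pairwise as distinct as the $p_{[\N_k]}$ were, so $p_{[\M]}$ is a proper convex combination of distinct points of $\Omega_{r,n}$ and therefore not a vertex. Iterating over the choice of summand gives the theorem.
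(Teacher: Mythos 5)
Your overall route is the same as the paper's: argue the contrapositive, write $p_{[\M_1]}$ as a nontrivial convex combination of other points of $\Omega_{r_1,n_1}$, and push this through the direct sum using the fact that indicator functions multiply, $\indicator{\mathscr{P}(\M_1\oplus\M_2)}=\indicator{\mathscr{P}(\M_1)}\cdot\indicator{\mathscr{P}(\M_2)}$, followed by symmetrization; the well-definedness and linearity of your map $L$ is exactly the paper's computation. One supporting claim is simply false, though repairable: a direct sum of Schubert matroids is in general \emph{not} Schubert, because the lattice of cyclic flats of $\mathsf{S}_1\oplus\mathsf{S}_2$ is the product of the two chains, not their ordinal sum; already $\U_{1,2}\oplus\U_{1,2}$ (the paper's own Example~\ref{example:schuberts-4-2}) is non-Schubert. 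This does not hurt the linearity of $L$, which only needs that valuative relations are preserved under taking products with $\indicator{\mathscr{P}(\M_2)}$ and averaging, together with a further Schubert expansion of each summand.

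The genuine gap is at the injectivity step, which you correctly single out as the crux but do not actually prove. The map $L$ is \emph{not} pointwise multiplication by $\symindicator{\mathscr{P}(\M_2\oplus\cdots\oplus\M_s)}$: the product $\symindicator{\mathscr{P}(\N)}\cdot\symindicator{\mathscr{P}(\M_2)}$ is only $\mathfrak{S}_{n_1}\times\mathfrak{S}_{n_2}$-invariant and is not $\symindicator{\mathscr{P}(\N\oplus\M_2)}$; to reach the latter one must still average over coset representatives of $\mathfrak{S}_{n_1}\times\mathfrak{S}_{n_2}$ in $\mathfrak{S}_n$. Multiplication by a nonzero function of the complementary coordinates is indeed injective, but that injectivity does not ``transport'' through the subsequent averaging, which is a projection and is precisely where distinct classes could a priori be identified; so your argument establishes injectivity of the wrong map. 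The Tutte-polynomial fallback cannot close the gap either, since the Tutte polynomial is not a universal valuative invariant: $T_{\N}=T_{\M_1}$ does not imply $p_{[\N]}=p_{[\M_1]}$. What is needed is that taking the direct sum with a fixed matroid is injective on symmetrized valuative classes, or at least that some $p_{[\N_k\oplus\M_2\oplus\cdots\oplus\M_s]}$ differs from $p_{[\M]}$; this is true, but requires a real argument, for instance via the universal valuative invariant for isomorphism classes (Derksen's $\mathcal{G}$-invariant), under which direct sum becomes a shuffle-type product valued in an integral domain. The paper's proof follows the same strategy as your first paragraph and treats this last point only briefly; since you made it the centerpiece of your proposal, it is the part that remains unproved as written.
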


\begin{proof}
By induction it is enough to assume $s=2$. Let $n_i:=|E(\M_i)|$ and let $r_i$ be the rank of~$\M_i$ for $i\in \{1,2\}$. Let us assume, without loss of generality, that $\M_1$ is not extremal. This implies that $p_{[\M_1]}$ is a nontrivial convex combination
\[p_{[\M_1]}=\sum_{j=1}^k b_j\,p_{[\mathsf N_j]}\]
of other distinct points $p_{[\mathsf N_j]}$ in $\Omega_{r,n}$.
By Lemma~\ref{lemma:symindicator}, we have
\[\symindicator{\mathscr{P}(\M_1)}=\sum_{j=1}^k b_j\symindicator{\mathscr{P}(\mathsf{N}_j)}.\]
Taking Cartesian products with $\mathscr{P}(\M_2)$, we have
\[\frac1{n_1!}\sum_{\sigma\in S_{n_1}}\indicator{\mathscr{P}(\sigma(\M_1)\oplus\M_2)}
=\sum_{j=1}^kb_j\cdot\frac1{n_1!}\sum_{\sigma\in S_{n_1}}\indicator{\mathscr{P}(\sigma(\mathsf N_j)\oplus \M_2)}.\]
Now, averaging over a set of coset representatives for $\mathfrak{S}_{n_1}$ in $\mathfrak{S}_{n_1+n_2}$, we see that it is possible
to express $\symindicator{\mathscr{P}(\M_1\oplus\M_2)}$ as a convex combination of other symmetrized indicator functions.
This shows $\M_1\oplus\M_2$ is not extremal.
\end{proof}

We conjecture the converse, which appears subtle:

\begin{conjecture}\label{conj:direct-sums}
    If $\M_1,\ldots,\M_s$ are extremal matroids, then $\M_1\oplus \cdots \oplus \M_s$ is extremal.
\end{conjecture}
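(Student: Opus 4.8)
The plan is to reduce the conjecture to a two-factor statement and then feed a carefully chosen sequence of valuative invariants into Lemma~\ref{lemma:extremal-sequence-invariants}. Combining the implication just proved with induction on the number of summands, it suffices to show: \emph{if $\M_1$ is extremal and $\M_2$ is a connected extremal matroid, then $\M_1\oplus\M_2$ is extremal.} Indeed, decomposing each $\M_i$ into its connected components — each extremal, by the theorem just proved — reduces the conjecture to the case of connected summands, and then writing $\M_1\oplus\cdots\oplus\M_s=(\M_1\oplus\cdots\oplus\M_{s-1})\oplus\M_s$ and inducting on $s$ reduces further to the displayed two-factor statement with the second factor connected. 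Fix $n_i=|E(\M_i)|$, $r_i=\rk(\M_i)$, and put $n=n_1+n_2$, $r=r_1+r_2$, and write $m_i=\symindicator{\mathscr{P}(\M_i)}$.

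The Cartesian-product-and-symmetrize operation used in the proof of the preceding theorem defines a bilinear map $B\colon V_{r_1,n_1}\times V_{r_2,n_2}\to V_{r,n}$ with $B\big(\symindicator{\mathscr{P}(\N_1)},\symindicator{\mathscr{P}(\N_2)}\big)=\symindicator{\mathscr{P}(\N_1\oplus\N_2)}$; fixing the second argument, $L:=B(-,m_2)\colon V_{r_1,n_1}\to V_{r,n}$ is linear, and it is injective (on generic points whose first block of coordinates sums to $r_1$ and whose second block lies in $\mathscr{P}(\M_2)$, only block-preserving permutations contribute to $B$, so $L(f)$ determines $f$). By Lemma~\ref{lemma:symindicator}, $L$ is an affine isomorphism from $\Omega_{r_1,n_1}$ onto a sub-polytope $\Omega'\subseteq\Omega_{r,n}$, and the extremality of $\M_1$ makes $p_{[\M_1\oplus\M_2]}=L(p_{[\M_1]})$ a vertex of $\Omega'$. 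Thus it remains to prove that this point is not absorbed into the larger polytope, i.e.\ that it is a vertex of $\Omega_{r,n}$; by Lemma~\ref{lemma:extremal-sequence-invariants} it suffices to exhibit a sequence of valuative invariants on $\mathcal{M}_{r,n}$ whose iterated maximum over $\mathcal{M}_{r,n}$ is attained exactly at the isomorphism classes mapping to $p_{[\M_1\oplus\M_2]}$.

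The sequence is built in two stages. Stage one isolates matroids decomposing with the shape of $\M_1\oplus\M_2$: from the rank-flag valuations $\Phi_{Z,\mathbf r}$ of Derksen–Fink (used already in the proof of Proposition~\ref{prop:dimensions-of-polytopes}) one assembles a non-negative valuative invariant counting ordered pairs $(X,E\setminus X)$ with $|X|=n_1$, $\rk_\N(X)=r_1$ and $\rk_\N(E\setminus X)=r_2$ — such an $X$ is automatically a separator, by submodularity — and, combining this with the valuative invariant cutting out the disconnected matroids as a face (Section~\ref{sec:faces}) and with valuative counts of circuits and cyclic flats recognizing that the size-$n_2$ block is connected, one whittles the running set down to matroids $\N=\N_1\oplus\N_2$ with $\N_2$ connected of type $(r_2,n_2)$. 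Stage two imports the extremality witnesses: if $\psi^{(1)}_\bullet$, $\psi^{(2)}_\bullet$ are the sequences of valuative invariants on $\mathcal{M}_{r_1,n_1}$, $\mathcal{M}_{r_2,n_2}$ whose iterated maxima isolate $\M_1$, $\M_2$, then $\N\mapsto\sum_X\psi^{(2)}_j(\N|X)$, summed over the connected size-$n_2$ rank-$r_2$ separators $X$, is again valuative (via the same $\Phi_{Z,\mathbf r}$-machinery), and on a matroid surviving stage one it evaluates to $\psi^{(2)}_j(\N_2)$ up to a controlled correction; appending these invariants, and then the analogous ones built from $\psi^{(1)}_\bullet$ and the complementary block, should cut the running set down to the single point $p_{[\M_1\oplus\M_2]}$.

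The main obstacle is stage one, together with the ``no-leakage'' assertions of stage two. The naive counting invariants tend to have maximum loci that are too large (a connected matroid vacuously maximizes any invariant supported off the decomposable locus) or maximum values that themselves depend on $\M_1$ and $\M_2$, and the separator used in stage two is not truly canonical: a size-$n_2$ rank-$r_2$ separator of $\N_1\oplus\N_2$ need not equal $E(\N_2)$ once $\N_1$ contains an isomorphic component. Identifying a combination of valuations — presumably one depending on $\M_1$ and $\M_2$ — that carves out precisely the decomposable-of-the-right-type locus with the correct multiplicities and that simultaneously controls the mixing between the two blocks is where we expect the real difficulty to lie. One should also keep in mind throughout that a vertex of $\Omega_{r,n}$ may represent several isomorphism classes (Remark~\ref{rem:nonisomorphic-same-vertex}), so every invariant in the sequence must be verified to be constant on the entire fiber over $p_{[\M_1\oplus\M_2]}$.
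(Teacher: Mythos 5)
This statement is Conjecture~\ref{conj:direct-sums}: the paper does not prove it, explicitly calls the converse direction ``subtle,'' and offers only computer verification for ground sets of size at most $7$. So there is no proof in the paper to compare against, and your proposal does not close the gap either: it is a plan whose decisive steps are left unexecuted, as you yourself concede. The reductions at the start are fine (using the proved theorem to pass to connected summands, and induction on $s$), and the observation that $p_{[\M_1\oplus\M_2]}=L(p_{[\M_1]})$ is a vertex of the subpolytope $\Omega'=L(\Omega_{r_1,n_1})$ is plausible modulo the hand-waved injectivity of $L$; but being a vertex of a subpolytope says nothing unless $\Omega'$ (or at least a face of $\Omega_{r,n}$ containing the point and contained in the relevant locus) is carved out by valuative invariants. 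That ``non-absorption'' step is precisely the whole content of the conjecture, and it is exactly what your stages one and two defer.

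Concretely, the gaps are: (1) the stage-one invariants are not shown to be valuative --- the indicator of the conjunction $\rk_\N(X)=r_1$ and $\rk_\N(E\setminus X)=r_2$ involves two non-nested sets, so it is not one of the $\Phi_{Z,\mathbf r}$, and ``the restriction $\N|X$ is connected'' is not a valuative condition (connectivity is detected only through quantities like Crapo's $\beta$ of the whole matroid, which is why the paper gets the \emph{disconnected} matroids, not the connected ones, as a face); (2) even granting valuativity, you give no argument that the iterated maxima of these invariants over all of $\mathcal{M}_{r,n}$ land exactly on the fiber of $p_{[\M_1\oplus\M_2]}$ --- your own remarks about maximum loci being too large, maximum values depending on $\M_1,\M_2$, and the separator not being canonical when $\M_1$ has a component isomorphic to $\M_2$ are genuine obstructions, not side issues; and (3) Lemma~\ref{lemma:extremal-sequence-invariants} additionally requires every invariant in the chain to be checked on the whole fiber over the candidate vertex (Remark~\ref{rem:nonisomorphic-same-vertex}), which is not addressed. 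A further caution: the extremality witnesses $\psi^{(i)}_\bullet$ exist only because $\M_i$ is a vertex of $\Omega_{r_i,n_i}$; transporting them to invariants on $\mathcal{M}_{r,n}$ by summing over separators changes both their domain and their normalization, and nothing guarantees the transported functions still separate $p_{[\M_1\oplus\M_2]}$ from points of $\Omega_{r,n}$ coming from connected matroids, which never appear in the smaller polytopes at all. As it stands, the proposal is a reasonable research outline for attacking the conjecture, but not a proof.
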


We have verified with the help of a computer that a disconnected matroid on at most $7$ elements is extremal if and only if all of its direct summands are extremal.

\section{Special faces}\label{sec:faces}

This section's aim is to show that many famous classes of matroids correspond to actual faces of the polytopes $\Omega_{r,n}$ and $\overline{\Omega}_{r,n}$. 

\begin{definition}
A family of matroids $\overline{\mathcal{F}}\subseteq\overline{\mathcal{M}}_{r,n}$ is \emph{facial in~$\overline{\Omega}_{r,n}$}
if the elements in the class are \emph{exactly the vertices} of some face of $\overline{\Omega}_{r,n}$. 

Similarly, a family of isomorphism classes $\mathcal{F}$ is \emph{facial in~$\Omega_{r,n}$} if the elements in $\overline{\mathcal{F}}$ correspond \emph{exactly to the set of $[\M]$ such that $p_{[\M]}$ is a point} of that face of $\Omega_{r,n}$.
\end{definition}

Note that a~priori we do not rule out the presence of further lattice points that do not come from isomorphism classes of matroids (see Theorem~\ref{thm:lattice-points-not-matroids} below).

\begin{lemma}
    Let $\mathcal{F}\subseteq\mathcal{M}_{r,n}$ a set of isomorphism classes, and let $\overline{\mathcal{F}}\subseteq \overline{\mathcal{M}}_{r,n}$ the set of all matroids whose isomorphism class lies in $\mathcal{F}$. If $\mathcal{F}$ is facial in $\Omega_{r,n}$, then $\overline{\mathcal{F}}$ is facial in $\overline{\Omega}_{r,n}$.
\end{lemma}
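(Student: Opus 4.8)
The plan is to lift the linear functional that cuts out the face in $\Omega_{r,n}$ to the ambient space of $\overline{\Omega}_{r,n}$ via the projection $\pi$ from Remark~\ref{rem:symmetrization-is-a-projection}, and then check that the face it cuts out there has exactly $\overline{\mathcal{F}}$ as its vertex set. Since $\mathcal{F}$ is facial in $\Omega_{r,n}$, there is a valuative invariant $f:\mathcal{M}_{r,n}\to\mathbb{R}$ (equivalently a linear functional on $\mathbb{R}^{\mathcal{S}_{r,n}}$) and a constant $c$ such that $f(\M)\le c$ for all $[\M]\in\mathcal{M}_{r,n}$, with equality precisely for $[\M]\in\mathcal{F}$. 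Composing with $\pi$ gives a linear functional on $\mathbb{R}^{\overline{\mathcal{S}}_{r,n}}$, which by the correspondence between valuative functions and valuative invariants (the Remark following Theorem~\ref{thm:DF}) is exactly the induced valuative function $\overline{f}:\overline{\mathcal{M}}_{r,n}\to\mathbb{R}$, $\overline{f}(\M)=f([\M])$.

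Next I would verify that $\overline{f}$ attains its maximum on $\overline{\Omega}_{r,n}$ exactly at the points $\overline{p}_\M$ with $\M\in\overline{\mathcal{F}}$. Because $\Omega_{r,n}=\pi(\overline{\Omega}_{r,n})$ and $\overline{p}_\M\mapsto p_{[\M]}$, we have $\overline{f}(\M)=f([\M])\le c$ for all $\M\in\overline{\mathcal{M}}_{r,n}$, and the maximum value $c$ is attained by $\overline{p}_\M$ if and only if $p_{[\M]}$ lies on the face $f=c$ of $\Omega_{r,n}$, i.e.\ if and only if $[\M]\in\mathcal{F}$, i.e.\ if and only if $\M\in\overline{\mathcal{F}}$. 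So $\{x\in\overline{\Omega}_{r,n}:\overline{f}(x)=c\}$ is a face whose vertices among the $\overline{p}_\M$ are exactly those with $\M\in\overline{\mathcal{F}}$. Finally, by Theorem~\ref{thm:vertices-omegabar} every $\overline{p}_\M$ is a vertex of $\overline{\Omega}_{r,n}$, so every vertex of this face is of the form $\overline{p}_\M$; hence the vertex set of the face is exactly $\{\overline{p}_\M:\M\in\overline{\mathcal{F}}\}$, which is what it means for $\overline{\mathcal{F}}$ to be facial in $\overline{\Omega}_{r,n}$.

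The only point requiring a little care is the passage "$p_{[\M]}$ on the face $\iff$ $\overline{p}_\M$ maximizes $\overline{f}$": one direction is automatic from $\pi(\overline{p}_\M)=p_{[\M]}$ and linearity, and the other uses that $c$ is genuinely the maximum of $f$ over all of $\mathcal{M}_{r,n}$, hence over the image $\pi(\overline{\Omega}_{r,n})=\Omega_{r,n}$. I do not anticipate a real obstacle here; the statement is essentially a formal consequence of the fact that $\Omega_{r,n}$ is a projection of $\overline{\Omega}_{r,n}$ together with Theorem~\ref{thm:vertices-omegabar}, and the proof should be just a few lines.
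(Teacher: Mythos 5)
Your proposal is correct and is essentially the paper's argument: the paper simply notes that $\Omega_{r,n}=\pi(\overline{\Omega}_{r,n})$ and that the preimage of a face under a linear projection is a face, and your pullback of the supporting functional $f$ to $f\circ\pi$ is exactly the standard way to see that fact. Your extra bookkeeping via Theorem~\ref{thm:vertices-omegabar} to identify the vertex set of the resulting face is a detail the paper leaves implicit, not a different route.
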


\begin{proof}
    This is immediate from the fact that $\Omega_{r,n}$ is a projection of $\overline{\Omega}_{r,n}$. Recall that the preimage of a face under a projection is always a face.
\end{proof}

We will also use the following handy lemma, which is of independent interest for matroid theorists. Recall that the \emph{girth} of a matroid is the size of its smallest circuit.

\begin{lemma}\label{lemma:girth}
    Let $\M\in \overline{\mathcal{M}}_{r,n}$ be a matroid. Denote by $\overline{\mathcal{S}}(\M)$ the set of all the Schubert matroids $\mathsf{S}\in \overline{\mathcal{S}}_{r,n}$ for which the coefficients $a_{\mathsf{S}}$ in equation~\eqref{eq:integer-combination-of-Schuberts} are non-zero. Then,
        \begin{equation}\label{eq:girths}
        \girth(\M) = \min\left\{ \girth(\mathsf{S}) : \mathsf{S} \in \overline{\mathcal{S}}(\M)\right\}.
        \end{equation}
\end{lemma}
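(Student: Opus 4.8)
The plan is to combine two facts: first, that the girth of a matroid can be read off from its base polytope (equivalently, is a valuative function), and second, a monotonicity property of girth under the kind of "polytopal containment" that is visible in a Schubert decomposition. Recall that $\girth(\M)$ is the size of the smallest circuit, equivalently $n - \sp(\M)$ where $\sp(\M)$ is the largest size of a proper subset of $[n]$ of full rank; dually, $1 - \girth(\M)/1$ — more precisely, the quantity $\girth(\M)$ is determined by which dilates of simplices appear along the "bottom" of $\mathscr{P}(\M)$. Concretely, $\girth(\M) = k$ iff $\mathscr{P}(\M)$ contains the base polytope of $\U_{r,[n]}$ restricted to... — the cleanest phrasing is: $\girth(\M) > t$ iff every $(r)$-subset of $[n]$ meeting a fixed $t$-set is... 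This needs to be set up carefully, but the upshot I want is a formula of the shape
\[
\girth(\M) = \min\{\,|C| : C \subseteq [n],\ \indicator{\mathscr{P}(\M)}\text{ vanishes on the face }x_C = 0\text{-ish region}\,\},
\]
which exhibits $\girth$ as recoverable from $\indicator{\mathscr{P}(\M)}$ in a way that interacts well with linear combinations.

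The key step is to show that if $\indicator{\mathscr{P}(\M)} = \sum_{\mathsf S} a_{\mathsf S}\indicator{\mathscr{P}(\mathsf S)}$ with all $a_{\mathsf S} \ne 0$, then $\girth(\M) = \min_{\mathsf S}\girth(\mathsf S)$. For the inequality $\girth(\M)\ge \min_{\mathsf S}\girth(\mathsf S)$: let $g = \min_{\mathsf S}\girth(\mathsf S)$, so every $\mathsf S\in\overline{\mathcal S}(\M)$ has no circuit of size $< g$; I claim $\M$ then has no circuit of size $< g$ either. A circuit of size $k$ in $\M$ is witnessed by the existence of a $k$-subset $D$ with $\rk_\M(D) = k-1$ and $\rk_\M(D\setminus x) = k-1$ for all $x\in D$; this "having a dependent $k$-set all of whose proper subsets are independent" is detected by evaluating $\indicator{\mathscr{P}(\M)}$ (equivalently $\indicator{\mathscr{P}_{\mathrm I}(\M)}$, via Lemma~\ref{lemma:valuative-relations-independence-polytope}) on an appropriate region — for instance, whether the point $\frac{k-1}{k}\mathbf 1_D$ lies in $\mathscr{P}_{\mathrm I}(\M)$ while its neighbours $\mathbf 1_{D\setminus x} + \tfrac{k-1-(k-1)}{\dots}$... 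Here I'd rather argue directly with ranks: $\rk_\M$ is itself a valuative function of $D$ (this is \cite[Proposition~5.3]{derksen-fink}, used already in the proof that lattice points are vertices), so $\indicator{(\text{"}\rk D = k-1\text{"})}$ is a valuative function, and the Schubert expansion gives $\rk_\M(D)$ as a combination of $\rk_{\mathsf S}(D)$; more usefully, the \emph{independence} of a set is a valuation ($f_B$-type functions), and one checks from the Schubert expansion that $D$ is a circuit of $\M$ forces $D$ to be a circuit of at least one $\mathsf S\in\overline{\mathcal S}(\M)$. That last implication is the crux and I expect it to be the main obstacle.

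For that crux, here is the route I would take. Order the ground set so that among circuits of minimum size $g = \girth(\M)$ we pick one, $C$. The statement "$C$ is dependent" is equivalent to $\mathbf 1_C/|C| \cdot (g-1) = \tfrac{g-1}{g}\mathbf 1_C \in \mathscr{P}_{\mathrm I}(\M)$ failing — wait, signs: $C$ dependent means $\rk(C) \le g-1$, i.e. $\mathscr{P}_{\mathrm I}(\M)$ does \emph{not} contain the scaled point $\mathbf 1_C$ truncated appropriately; meanwhile every proper subset independent means the relevant lower point \emph{is} in $\mathscr{P}_{\mathrm I}(\M)$. Evaluating the identity $\indicator{\mathscr{P}_{\mathrm I}(\M)} = \sum a_{\mathsf S}\indicator{\mathscr{P}_{\mathrm I}(\mathsf S)}$ at a point $y$ that lies in $\mathscr{P}_{\mathrm I}(\mathsf S)$ precisely when $C$ is independent in $\mathsf S$: such a $y$ exists because, for a fixed $C$ with $|C| = g$, the condition "$\rk_{\N}(C) = |C|$" is a single linear inequality being tight on $\mathscr{P}_{\mathrm I}(\N)$, and picking $y$ just outside that facet isolates it. Then $\indicator{\mathscr{P}_{\mathrm I}(\M)}(y) = 0$ (as $C$ dependent in $\M$) forces $\sum_{\mathsf S : C\text{ indep}} a_{\mathsf S} = 0$; combined with a second evaluation handling the "proper subsets independent" part and the fact that all $a_{\mathsf S}\ne 0$, this yields that at least one $\mathsf S\in\overline{\mathcal S}(\M)$ has $C$ dependent, hence $\girth(\mathsf S)\le g$. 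Running this for the minimum-size circuit gives $\min_{\mathsf S}\girth(\mathsf S) \le \girth(\M)$. Conversely, $\girth(\mathsf S)\ge \girth(\M)$ for every $\mathsf S\in\overline{\mathcal S}(\M)$: this is the cleaner direction and should follow because girth, as the minimal $g$ such that some $g$-set is dependent, can only increase when passing to a "summand" polytope in a valuative decomposition where $\M$'s polytope is (set-theoretically) covered — here I'd invoke that $\mathscr{P}(\M) = \bigcup_{\mathsf S\in\overline{\mathcal S}(\M), a_{\mathsf S}>0}\mathscr{P}(\mathsf S)$ is false in general (coefficients can be negative), so instead I'd use the contrapositive of the previous paragraph's argument: if some $\mathsf S$ had a circuit $C'$ smaller than $\girth(\M)$, then $C'$ independent in $\M$ plus the leading behaviour of the expansion near that face would force a cancellation incompatible with $\indicator{\mathscr{P}(\M)}$ being a genuine $\{0,1\}$-valued function on that region. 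I expect the bookkeeping of which $a_{\mathsf S}$ survive each evaluation — essentially a Möbius/inclusion-exclusion argument over the poset of faces "$C$ spans" — to be the delicate part, and I would lean on Theorem~\ref{thm:indicator-functions-schubert} to make the surviving terms explicit in terms of cyclic flats containing or avoiding $C$.
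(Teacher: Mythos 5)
Your plan for the direction $\min\{\girth(\mathsf{S}):\mathsf{S}\in\overline{\mathcal{S}}(\M)\}\le\girth(\M)$ is essentially sound, but it is written far less cleanly than it needs to be: there is no need to hunt for a point ``just outside a facet''. Take a minimum circuit $C$ of $\M$ and evaluate the identity $\indicator{\mathscr{P}_{\mathrm I}(\M)}=\sum_{\mathsf{S}}a_{\mathsf{S}}\indicator{\mathscr{P}_{\mathrm I}(\mathsf{S})}$ (Lemma~\ref{lemma:valuative-relations-independence-polytope}) at the two lattice points $\mathbf{1}_C$ and $\mathbf{1}_{C\smallsetminus\{x\}}$ for some $x\in C$; a $0/1$ vector lies in an independence polytope exactly when its support is independent, so these evaluations give $\sum_{\mathsf{S}:\,C\text{ indep.}}a_{\mathsf{S}}=0$ and $\sum_{\mathsf{S}:\,C\smallsetminus\{x\}\text{ indep.}}a_{\mathsf{S}}=1$, and subtracting shows some $\mathsf{S}\in\overline{\mathcal{S}}(\M)$ has $C$ dependent, hence $\girth(\mathsf{S})\le|C|=\girth(\M)$. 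Note that what closes this argument is the second evaluation (equivalently the normalization $\sum_{\mathsf{S}}a_{\mathsf{S}}=1$), \emph{not} ``the fact that all $a_{\mathsf{S}}\ne 0$'', which by itself gives nothing. This route differs from the paper, which instead feeds the valuative invariant counting independent sets of a fixed size, together with the coefficient sum being $1$, into the expansion; both work.

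The genuine gap is in the other direction, $\girth(\mathsf{S})\ge\girth(\M)$ for every $\mathsf{S}\in\overline{\mathcal{S}}(\M)$, which you call ``the cleaner direction'' but for which your proposed mechanism cannot work. If some $\mathsf{S}$ in the support had a circuit $C'$ with $|C'|<\girth(\M)$, then $C'$ is independent in $\M$ and evaluating the identity at $\mathbf{1}_{C'}$ only yields $\sum_{\mathsf{S}:\,C'\text{ indep.}}a_{\mathsf{S}}=1$; this is perfectly compatible with other support matroids having $C'$ dependent, because the coefficients $a_{\mathsf{S}}$ may be negative and cancel. No pointwise evaluation can produce the ``cancellation incompatible with $\indicator{\mathscr{P}(\M)}$ being $\{0,1\}$-valued'' that you hope for: the identity already guarantees agreement of values everywhere, so there is no contradiction to be extracted this way. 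A genuinely different input is needed. The paper supplies it via truncation: by Lemma~\ref{lemma:truncation} the iterated truncation of $\M$ down to rank below the girth is a uniform matroid, truncations of Schubert matroids are Schubert, and the uniqueness in Theorem~\ref{thm:DF} then forces the support Schubert matroids to truncate to the uniform matroid, i.e.\ to have girth at least $\girth(\M)$. Alternatively one can argue through Theorem~\ref{thm:indicator-functions-schubert}: the support matroids are the $\mathsf{S}_{\mathrm{C}}$ for chains of cyclic flats of $\M$, the girth of $\mathsf{S}_{\mathrm{C}}$ is one more than the rank of the smallest nonempty cyclic flat in the chain, and that cyclic flat, being dependent in $\M$, already contains a circuit of $\M$ of at most that size. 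Either way, this direction requires one of these structural ingredients, which your proposal does not contain; it is a missing idea, not bookkeeping.
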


\begin{proof}
    Let us name $\alpha := \girth(\M)$, and give the right~hand side of equation~\eqref{eq:girths} the name $\beta$. If we truncate $\M$ exactly $r-\alpha+1$ times, we obtain the uniform matroid $\operatorname{tr}^{(r-\alpha+1)}(\M) = \U_{\alpha,n}$. By relying on Lemma~\ref{lemma:truncation}, the only possibility is that the truncations of all the Schubert matroids with non-zero coefficient are also uniform. This means that all of these Schubert matroids have girth at least $\alpha$. In other words, $\girth(\M) = \alpha \leq \beta$.

    On the other hand, consider the valuative function $\M \mapsto I_{\beta}(\M)$, i.e., the number of independent subsets of size $\beta$. Since all the corresponding Schubert matroids attain the maximum possible $\binom{n}{\beta}$, it follows that the number of independent sets of $\M$ of size $\beta$ is $\binom{n}{\beta}$ as well. In other words, $\alpha=\girth(\M) \geq \beta$.
\end{proof}

\begin{remark}\label{remark:girth-of-schubert}
    It is straightforward to check that the girth of a Schubert matroid, when seen as a lattice path matroid, is determined by the number of initial vertical segments of the upper path: if the upper path for $\mathsf{S}$ starts with $m$ vertical steps, then $\girth(\mathsf{S}) = m+1$. For example, the girths of the Schubert matroids depicted in Figure~\ref{fig:schuberts-4-2} are from left to right respectively $1$, $1$, $1$, $2$, $2$, and $3$. 
\end{remark}

\subsection{Simple and disconnected matroids}

A matroid being connected or simple is captured by the facial structure of our polytopes in a straightforward way.

\begin{proposition}\label{prop:disconnected}
    For every $n>1$, the class of disconnected matroids is facial in $\Omega_{r,n}$ and $\overline{\Omega}_{r,n}$.
\end{proposition}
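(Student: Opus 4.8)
The plan is to exhibit a valuative invariant (respectively valuative function) whose maximum is attained precisely on the disconnected matroids, and then apply Lemma~\ref{lemma:extremal-sequence-invariants} together with the observation that linear functionals on the ambient space of $\Omega_{r,n}$ (resp.\ $\overline{\Omega}_{r,n}$) are exactly the valuative invariants (resp.\ valuative functions). A natural candidate is built from the number of bases: by Example~\ref{example:uniforms-are-extremal} the map $\M\mapsto|\mathscr B(\M)|$ is valuative, but this is maximized by $\U_{r,n}$, which is connected, so we need something finer. Instead I would use an invariant detecting a separator. For a fixed proper nonempty subset $A\subseteq[n]$, the quantity $c_A(\M) := \rk_\M(A) + \rk_\M([n]\setminus A) - r$ is valuative (it is a $\Z$-linear combination of the rank valuations $\rk_\M(A)$, $\rk_\M([n]\setminus A)$ and a constant, all valuative by \cite[Proposition~5.3]{derksen-fink}), it is always $\ge 0$, and it equals $0$ exactly when $A$ is a separator of $\M$. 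Summing $-c_A$ over all $A$ (or, in the unlabelled case, over a set of orbit representatives, which keeps it a valuative invariant) gives a valuative function that attains its maximum value $0$ precisely at the matroids possessing some separator, i.e.\ the disconnected ones. Hence the disconnected matroids are exactly the vertices lying on the face where this linear functional is maximized, so they are facial in $\overline{\Omega}_{r,n}$; the statement for $\Omega_{r,n}$ follows either by the same argument with the symmetrized invariant or by the lemma preceding this proposition, since facial in $\Omega_{r,n}$ implies facial in $\overline{\Omega}_{r,n}$ and the converse direction here is handled directly.

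One subtlety to check is that the face so produced really has the disconnected matroids as \emph{exactly} its vertices, with no spurious extra vertices: since the functional $\sum_A(-c_A)$ is strictly negative on every connected matroid and identically $0$ on every disconnected one, the maximizing face contains $\overline p_{\M}$ if and only if $\M$ is disconnected, which is precisely the definition of facial. For $\Omega_{r,n}$ one must also confirm that disconnected isomorphism classes are closed under the identifications $p_{[\M]}=p_{[\N]}$ of Remark~\ref{rem:nonisomorphic-same-vertex}; but connectedness is a valuative invariant's level set in the relevant sense — more carefully, $c_A$ descends to the symmetrized setting, so two classes with the same point $p_{[\cdot]}$ agree on $\sum_A(-c_A)$ and hence are both disconnected or both connected.

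The main obstacle I anticipate is not the construction of the functional but the verification that $c_A$ (equivalently each rank function $\M\mapsto\rk_\M(A)$) is genuinely valuative in the strong sense of Definition~\ref{def:valuation}; one should cite \cite[Proposition~5.3]{derksen-fink}, where the functions $\Phi_{Z,\mathbf r}$ are shown to be valuations, and note that $\rk_\M(A) = \sum_{i\ge 0} \indicator{\Phi_{(\varnothing\subseteq A\subseteq[n]),(i,\,r)}(\M)=1}\cdot 1$ is a $\Z$-combination of such indicator valuations (or, more simply, invoke that $x\mapsto\indicator{\mathscr P(\M)}$ evaluated against the functional "maximize $\sum_{i\in A}x_i$" computes $\rk_\M(A)$, which is the polyhedral way to see valuativity). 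Once valuativity is in hand, the rest is a short argument: $c_A\ge 0$ by submodularity of the rank function, and $c_A=0 \iff A$ is a separator is standard matroid theory, so a direct sum decomposition exists iff some $c_A$ vanishes, i.e.\ iff $\sum_A(-c_A)$ attains its maximum $0$.
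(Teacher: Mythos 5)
Your construction has a genuine gap at its central step. Each individual functional $c_A(\M)=\rk_\M(A)+\rk_\M([n]\setminus A)-r$ is indeed valuative, non-negative, and vanishes exactly when $A$ is a separator, so each set $\{\M : c_A(\M)=0\}$ is a face. But the disconnected matroids are the \emph{union} over all proper nonempty $A$ of these faces, and a union of faces is not in general a face; in particular the summed functional $\sum_A(-c_A)$ does not do what you claim. Since every summand is $\le 0$, the sum equals $0$ only when $c_A=0$ for \emph{every} $A$, i.e.\ only for matroids all of whose subsets are separators (direct sums of loops and coloops). Your assertion that $\sum_A(-c_A)$ is ``identically $0$ on every disconnected matroid'' is false: for $\U_{1,2}\oplus\U_{1,2}$ with summands $\{1,2\}$ and $\{3,4\}$, the set $A=\{1,3\}$ gives $c_A=2$, so the sum is strictly negative there. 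Replacing the sum by $\max_A(-c_A)$ would have the right zero locus but is no longer a linear functional, so it does not exhibit a face. This is the essential difficulty: one needs a \emph{single} valuative invariant whose vanishing locus is exactly the disconnected matroids, and no nonnegative combination of the $c_A$ achieves this. (Note also that the naive fix ``number of connected components'' is not valuative, as the hypersimplex split of $\mathscr{P}(\U_{2,4})$ into two pieces meeting in $\mathscr{P}(\U_{1,2}\oplus\U_{1,2})$ shows.)

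The paper resolves exactly this point by using Crapo's beta invariant: $\beta$ is valuative because it is a coefficient of the Tutte polynomial (citing \cite[Corollary~5.7]{ardila-fink-rincon}), it satisfies $\beta(\M)\ge 0$ always, and for $n>1$ it vanishes precisely on disconnected matroids. Minimizing $\beta$ therefore cuts out the disconnected matroids as a face of both $\overline{\Omega}_{r,n}$ and $\Omega_{r,n}$ in one stroke. The parts of your argument concerning valuativity of the rank functions, the equivalence $c_A=0\iff A$ is a separator, and the reduction between the labelled and unlabelled polytopes are fine, but without a single functional playing the role of $\beta$ the proof does not go through.
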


\begin{proof}
    Consider the map $\M\mapsto \beta(\M)$ where $\beta$ denotes Crapo's beta invariant. This is a valuative invariant as it is a coefficient of the Tutte polynomial (see \cite[Corollary~5.7]{ardila-fink-rincon}). It is a well known fact that $\beta(\M) \geq 0$ for all $\M$, and that $\beta(\M) = 0$ if and only if $\M$ is disconnected or $\M\cong \U_{0,1}$. In particular, for $n>1$, $\beta$ attains its minimum exactly at disconnected matroids, which thus correspond to faces of our polytopes.
\end{proof}

\begin{remark}
    The minimal face of $\Omega_{2,4}$ containing all points coming from connected matroids must of course contain the points associated to $\mathsf{T}_{2,4}$ and $\U_{2,4}$, and in particular the full edge that joins them. By Example~\ref{ex:T24-non-extremal}, it follows that $\U_{1,2}\oplus \U_{1,2}$ belongs to this face. This says that \emph{connected} matroids are not facial in $\Omega_{2,4}$.
    Repeating the example with an appropriate number of loop and coloop summands included
    shows that connected matroids are not facial in $\Omega_{r,n}$ for any $2\le r\le n-2$. 
    
    Similarly, connected matroids are not facial in $\overline{\Omega}_{r,n}$. To prove this at $n = 4$ and $r=2$, take the matroids $\M_1$, $\M_2$, $\M_3$ with sets of bases
    \begin{align*} 
    \mathscr{B}(\M_1) &= \binom{[4]}{2} \smallsetminus \{\{1,2\}\},\\
    \mathscr{B}(\M_2) &= \binom{[4]}{2} \smallsetminus \{\{3,4\}\},\\
    \mathscr{B}(\M_3) &= \binom{[4]}{2} \smallsetminus \{\{1,2\},\{3,4\}\}.
    \end{align*}
We have that $\M_1,\M_2\cong \mathsf{T}_{2,4}$ (cf.\ Example~\ref{ex:T24-non-extremal}) are connected, and $\M_3 \cong \U_{1,2}\oplus \U_{1,2}$ is disconnected. Notice that there is a valuative relation coming from the split of a hypersimplex into two square pyramids:
    \[ \indicator{\mathscr{P}(\M_3)}  = -\indicator{\mathscr{P}(\U_{2,4})} + \indicator{\mathscr{P}(\M_1)} + \indicator{\mathscr{P}(\M_2)}.\]
The smallest face containing all points coming from connected matroids in $\mathcal{M}_{2,4}$ must in fact contain the three matroids appearing on the right hand side in the last display, and therefore also the disconnected matroid on the left.
\end{remark}

That loopless matroids and simple matroids are facial is the $m=2$ and $m=3$ cases of the next proposition.

\begin{proposition}\label{prop:simple-face}
    Let $m$ be a fixed non-negative integer. The class of all matroids whose girth is at least $m$ is facial in $\Omega_{r,n}$ and $\overline{\Omega}_{r,n}$. Moreover,
    \begin{align*}
        \dim(\text{matroids in $\Omega_{r,n}$ with $\girth(\M)\geq m$}) &= \binom{n-m+1}{r-m+1} - 1 \\
        \dim(\text{matroids in $\overline{\Omega}_{r,n}$ with $\girth(\M)\geq m$}) & =  [x^{r-m+1}] \mathrm{H}_{\U_{n-m+1,n}}(x) - 1.
    \end{align*}
    In the second equation, $\mathrm{H}_{\U_{r,n}}(x)$ stands for the augmented Chow polynomial of the uniform matroid $\U_{r,n}$. In particular,
    \begin{align*}
        \dim(\text{simple matroids in $\Omega_{r,n}$}) &= \binom{n-2}{r-2} - 1 \\
        \dim(\text{simple matroids in $\overline{\Omega}_{r,n}$}) & =  [x^{r-2}] \mathrm{H}_{\U_{n-2,n}}(x) - 1.
    \end{align*}
\end{proposition}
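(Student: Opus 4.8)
The plan is, for each fixed $m\ge 1$, to exhibit one valuative invariant whose maximum locus is exactly the class of matroids of girth $\ge m$, thereby establishing faciality, and then to pin down the dimension of the resulting face by trapping it between two affine subspaces of equal dimension. (When $r<m-1$ the class is empty, hence facial via the empty face, and the claimed dimension reads $-1$; so I assume $r\ge m-1$.) For faciality, take $f(\M)=I_{m-1}(\M)$, the number of independent sets of size $m-1$; this is a valuative invariant (as already used in the proof of Lemma~\ref{lemma:girth}; see also Lemma~\ref{lemma:valuative-relations-independence-polytope}). One has $f(\M)\le\binom{n}{m-1}$ always, with equality precisely when every $(m-1)$-element subset of the ground set is independent, i.e.\ precisely when $\girth(\M)\ge m$. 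Since the linear functionals on the ambient space of $\Omega_{r,n}$ are exactly the valuative invariants (and those of $\overline{\Omega}_{r,n}$ exactly the valuative functions), the locus $F$ where $f$ attains its maximum over $\Omega_{r,n}$ is a face, and $p_{[\M]}\in F$ if and only if $\girth(\M)\ge m$; the same holds for the corresponding face $\overline F$ of $\overline{\Omega}_{r,n}$. In particular $F=\conv\{p_{[\M]}:\girth(\M)\ge m\}$.

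For the dimension I would sandwich $F$ as follows. By Lemma~\ref{lemma:girth}, if $\girth(\M)\ge m$ then every Schubert matroid occurring in the expansion~\eqref{eq:integer-combination-of-Schuberts} of $\indicator{\mathscr{P}(\M)}$ has girth $\ge m$; passing to isomorphism classes, $p_{[\M]}$ is supported on the coordinates indexed by the $[\mathsf{S}]\in\mathcal{S}_{r,n}$ with $\girth(\mathsf{S})\ge m$. Together with the fact that every $p_{[\M]}$ has coordinate sum $1$ (Proposition~\ref{prop:dimensions-of-polytopes}), this confines $F$ to an affine subspace of dimension $k-1$, where $k:=|\{[\mathsf{S}]\in\mathcal{S}_{r,n}:\girth(\mathsf{S})\ge m\}|$. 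Conversely, each such $\mathsf{S}$ is itself a matroid of girth $\ge m$, so $p_{[\mathsf{S}]}$ is the standard basis vector $e_{[\mathsf{S}]}\in F$; these $k$ standard basis vectors are affinely independent, so they already span an affine subspace of dimension $k-1$. Hence $\dim F=k-1$, and the identical argument gives $\dim\overline F=\overline k-1$ with $\overline k:=|\{\mathsf{S}\in\overline{\mathcal{S}}_{r,n}:\girth(\mathsf{S})\ge m\}|$.

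It remains to count. For $k$: by Remark~\ref{remark:girth-of-schubert}, an isomorphism class of rank-$r$ Schubert matroid on $n$ elements, drawn as a lattice path matroid, has girth $\ge m$ exactly when its upper path begins with at least $m-1$ vertical steps; deleting those steps leaves an arbitrary lattice path with $r-m+1$ vertical and $n-r$ horizontal steps, so $k=\binom{n-m+1}{r-m+1}$, which becomes $\binom{n-2}{r-2}$ when $m=3$. For $\overline k$, the labelled count must be matched with $[x^{r-m+1}]\mathrm{H}_{\U_{n-m+1,n}}(x)$, and this is where I expect the real work to lie: it requires parametrizing labelled Schubert matroids by their chain of cyclic flats together with its rank data, reading off the girth from that data, and comparing with the known expansion of the augmented Chow polynomial of a uniform matroid — or, equivalently, checking the identity against the generating function for labelled Schubert matroids in \cite[Theorem~1.5c]{derksen-fink}, of which the case $m=1$ is the binomial-Eulerian identity $|\overline{\mathcal{S}}_{r,n}|=[x^r]\mathrm{H}_{\U_{n,n}}(x)$. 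Specializing to $m=2$ and $m=3$ then records that loopless and simple matroids are facial with the stated dimensions.
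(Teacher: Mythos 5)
Your argument is essentially the paper's: faciality via the valuative invariant $I_{m-1}$ maximized exactly on girth $\ge m$, the dimension sandwich via Lemma~\ref{lemma:girth} together with the coordinate-sum-$1$ observation of Proposition~\ref{prop:dimensions-of-polytopes} on one side and the standard basis vectors $p_{[\mathsf{S}]}$ of Schubert matroids of girth $\ge m$ on the other, and the lattice-path count from Remark~\ref{remark:girth-of-schubert} giving $\binom{n-m+1}{r-m+1}$ in the unlabelled case. The one step you leave open --- that the number of \emph{labelled} Schubert matroids in $\overline{\mathcal{S}}_{r,n}$ of girth $\ge m$ equals $[x^{r-m+1}]\mathrm{H}_{\U_{n-m+1,n}}(x)$ --- is precisely where you say ``the real work'' lies, and it is a genuine gap in your write-up as it stands; but the paper does not rederive it either: it closes this step by citing Hoster's theorem (reference \cite{hoster}), which enumerates labelled Schubert matroids of prescribed girth or cogirth via augmented Chow polynomials of uniform matroids. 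So your proof is complete once you replace the sketched ``parametrize by chains of cyclic flats and compare generating functions'' with that citation (or actually carry out such a comparison, which is not done in the paper).
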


\begin{proof}
    Consider the map $\M \mapsto I_{m-1}$, where $I_{m-1}$ stands for the number of independent sets of rank $m-1$ of $\M$. Notice that  $|I_{m-1}| = [x^{r-m}]T_{\M}(x+1,1)$ where $T_{\M}(x,y)$ is the Tutte polynomial of $\M$; the map is therefore a valuation by \cite[Corollary~5.7]{ardila-fink-rincon}. This map attains its maximum exactly at the matroids for which all the sets of size $m-1$ are independent, i.e., at the matroids whose girth is at least $m$. In particular, this class of matroids must form a face in each of the two polytopes.

    In light of Lemma~\ref{lemma:girth}, the non-zero coordinates of a  matroid of girth $\ge m$ (resp.\ its isomorphism class) in $\overline{\Omega}_{r,n}$  (resp.\ $\Omega_{r,n}$) correspond to labelled (resp.\ isomorphism classes of) Schubert matroids of girth $\ge m$. 
    So to prove the equations for dimension in the proposition,
    it is enough to count these Schubert matroids (cf.\ Theorem~\ref{thm:dimensions-of-polytopes}) and show:
    \begin{align*}
        \#(\text{matroids in $\mathcal{S}_{r,n}$ with $\girth(\M)\geq m$}) &= \binom{n-m+1}{r-m+1} - 1 \\
        \#(\text{classes in $\overline{\mathcal{S}}_{r,n}$ with $\girth(\M)\geq m$}) & =  [x^{r-m+1}] \mathrm{H}_{\U_{n-m+1,n}}(x) - 1.
    \end{align*}
    The first equation follows from Remark~\ref{remark:girth-of-schubert}, as one must enumerate lattice paths from $(0,0)$ to $(n-r,r)$ that start with (at least) $m-1$ upper steps.  The second follows from the main theorem proved by Hoster in \cite{hoster}, which enables the enumeration of (labelled) Schubert matroids with given (co)girth via augmented Chow polynomials of uniform matroids.
\end{proof}

Chow polynomials of uniform matroids were described in \cite[Theorem~1.9]{ferroni-matherne-stevens-vecchi}, where a closed expression for $[x^i]\mathrm{H}_{\U_{r,n}}(x)$ in terms of Eulerian numbers is recorded, making it possible to compute these dimensions fairly explicitly. As an alternative, one can use the combinatorial formula found by Hameister, Rao, and Simpson in \cite[Theorem~5.1]{hameister-rao-simpson}. Notice that the case $m=2$ in the above statement, together with the fact that the augmented Chow polynomial of $\U_{n-1,n}$ is an Eulerian polynomial, gives that the dimension of the loopless face in $\overline{\Omega}_{r,n}$ is $A_{n-1,r-1}-1$, where $A_{n-1,r-1}$ is an Eulerian number.

\begin{remark}
    In Section~6.2 of his paper \cite{hampe} introducing the \emph{intersection ring of matroids},
    Hampe left the following open problem: study the polytope that arises as the convex hull of all matroid classes in that ring. 
    Only loopless matroids give classes, and these expand into loopless Schubert matroid classes with the same coefficients as in Theorem~\ref{thm:indicator-functions-schubert};
    so in the language of the present paper Hampe asked for the convex hull of all $\overline{p}_{\M}$ for $\M\in \overline{\mathcal{M}}_{r,n}$ without loops.
    Let us temporarily call this $\underline{\overline{\Omega}}_{r,n}$
    (with the underline riffing on the notational usage of \cite{semismall}). 
    
    Proposition~\ref{prop:simple-face} with $m=2$ implies that $\underline{\overline{\Omega}}_{r,n}$ is a face of $\overline{\Omega}_{r,n}$.
    Moreover, because the valuative relations for $\overline{\mathcal{M}}_{r,n}$ are generated by 
    relations in which every matroid appearing has an identical set of loops (Remark~\ref{rem:definitions-of-valuation})---we could say that matroids with different sets of loops cannot interact valuatively---we have that $\overline{\Omega}_{r,n}$ is the join of the family consisting of $\binom{n}{m}$ copies of~$\underline{\overline{\Omega}}_{r,m}$ for each $r\le m\le n$,
    each embedded in the coordinate subspace of Schubert matroids with a fixed set of $n-m$ loops.
\end{remark}

\subsection{Split matroids and their subclasses}

A matroid $\M$ of rank $r$ is said to be \emph{paving} if all of its subsets of cardinality $r-1$ are independent, i.e., if $\girth(\M)\geq r$. If a paving matroid $\M$ has a dual which is also paving, then one says that $\M$ is a \emph{sparse paving matroid}. A famous conjecture, originating from a hypothesis by Crapo and Rota in \cite[p.~317]{crapo-rota}, asserts that asymptotically almost all matroids are sparse paving. We refer to \cite{mayhew-etal} for more precise conjectures. The main result of \cite{pendavingh-vanderpol} provides partial evidence for these beliefs. 

As we show below, these classes of matroids appear as faces of our polytopes.

\begin{proposition}\label{prop:paving-copaving-faces}
    The classes of paving matroids and copaving matroids are facial in $\Omega_{r,n}$ and $\overline{\Omega}_{r,n}$. Moreover,
    \begin{align*}
        \dim(\text{paving matroids in $\Omega_{r,n}$}) &= n-r, \\
        \dim(\text{paving matroids in $\overline{\Omega}_{r,n}$}) &= \sum_{j=r}^{n} \binom{n}{j} - 1,
    \end{align*}
    and symmetrically for copaving matroids.
\end{proposition}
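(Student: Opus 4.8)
The plan is to deduce the whole statement from Proposition~\ref{prop:simple-face}. A matroid of rank $r$ is paving precisely when $\girth(\M)\ge r$, so taking $m=r$ in Proposition~\ref{prop:simple-face} already shows that the paving matroids are facial in both $\Omega_{r,n}$ and $\overline{\Omega}_{r,n}$, and it gives
\[
\dim(\text{paving in }\Omega_{r,n})=\binom{n-r+1}{\,r-r+1\,}-1=\binom{n-r+1}{1}-1=n-r .
\]
For the second dimension, the argument of Proposition~\ref{prop:simple-face} (via Lemma~\ref{lemma:girth} and the reasoning behind Proposition~\ref{prop:dimensions-of-polytopes}) shows that the paving face of $\overline{\Omega}_{r,n}$ contains the point $\overline{p}_{\mathsf{S}}=e_{\mathsf{S}}$ of every paving Schubert matroid $\mathsf{S}$, and is contained in the coordinate subspace those $e_{\mathsf{S}}$ span; hence its dimension is one less than the number of \emph{labelled} paving Schubert matroids of rank $r$ on $[n]$. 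So the remaining task for the paving case is to show that this number is $\sum_{j=r}^{n}\binom{n}{j}$. (Equivalently one must verify $[x^{1}]\mathrm{H}_{\U_{n-r+1,n}}(x)=\sum_{j=r}^{n}\binom{n}{j}$, which could also be extracted from the closed formula for augmented Chow polynomials of uniform matroids in \cite[Theorem~1.9]{ferroni-matherne-stevens-vecchi}.)

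To count labelled paving Schubert matroids I would classify them by their lattices of cyclic flats. Assume $2\le r\le n-1$; the cases $r\le 1$ and $r=n$ are immediate. Such a matroid $\mathsf{S}$ is loopless, and every nonempty cyclic flat $Z$ contains a circuit, necessarily of size $\ge r$, so $\rk Z\ge r-1$; since the only rank-$r$ flat is $E$, every nonempty \emph{proper} cyclic flat has rank exactly $r-1$, and — the cyclic flats forming a chain — there is at most one such flat $Z$. Finally $E$ is a cyclic flat iff $\mathsf{S}$ has no coloops. Using that a Schubert matroid is determined by its chain of cyclic flats together with their ranks (the matroid $\mathsf{S}_{\mathrm{C}}$ of Section~\ref{sec:preliminaries}), one is left with exactly three families:
\begin{itemize}
\item cyclic flats $\{\varnothing,E\}$: this is $\U_{r,n}$, contributing $1$;
\item cyclic flats $\{\varnothing,Z,E\}$ with $\rk Z=r-1$: here $Z$ must be a union of circuits, hence dependent, forcing $|Z|\ge r$, and $E$ must have no coloops, which fails exactly when $Z$ omits a single element, forcing $|Z|\le n-2$; this contributes $\sum_{z=r}^{n-2}\binom{n}{z}$, one matroid per admissible $Z$;
\item cyclic flats $\{\varnothing,Z\}$ with $\rk Z=r-1$: then $E\setminus Z$ consists of coloops, and since each coloop raises the rank by $1$, $\rk Z=r-1$ forces $|E\setminus Z|=1$, so $\mathsf{S}=\U_{r-1,n-1}\oplus\U_{1,1}$, contributing $n$ (one per choice of coloop).
\end{itemize}
These families are pairwise disjoint, and $1+\sum_{z=r}^{n-2}\binom{n}{z}+n=\sum_{j=r}^{n}\binom{n}{j}$ since $\binom{n}{n-1}+\binom{n}{n}=n+1$; up to isomorphism the middle family gives $n-r-1$ classes, so there are $n-r+1$ isomorphism classes of paving Schubert matroids, reconfirming the formula for $\Omega_{r,n}$. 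The delicate point here, which I expect to be the main obstacle, is precisely this bookkeeping: the chain $\varnothing\subsetneq Z\subsetneq E$ with $|Z|=n-1$ does \emph{not} arise from any matroid with that chain (its unique element outside $Z$ is forced to be a coloop, so $E$ fails to be cyclic), so the middle family must stop at $|Z|\le n-2$ and must not be conflated with the third family — overlooking this inflates the count by exactly~$n$.

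For copaving matroids the plan is to invoke matroid duality. Recall from Section~\ref{sec:four} that in $p_{[\M]}$ the coordinate at $[\mathsf{S}]$ equals the coordinate at $[\mathsf{S}^{*}]$ in $p_{[\M^{*}]}$; since duals of Schubert matroids are Schubert matroids, the coordinate permutation $e_{[\mathsf{S}]}\mapsto e_{[\mathsf{S}^{*}]}$ is a linear isomorphism $\R^{\mathcal{S}_{r,n}}\xrightarrow{\ \sim\ }\R^{\mathcal{S}_{n-r,n}}$ carrying $p_{[\M]}$ to $p_{[\M^{*}]}$, hence an affine isomorphism $\Omega_{r,n}\xrightarrow{\ \sim\ }\Omega_{n-r,n}$, and likewise $\overline{\Omega}_{r,n}\xrightarrow{\ \sim\ }\overline{\Omega}_{n-r,n}$. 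Because $\M$ is copaving iff $\M^{*}$ is paving, this isomorphism identifies the copaving locus of $\Omega_{r,n}$ with the paving face of $\Omega_{n-r,n}$; so copaving matroids are facial, with $\dim(\text{copaving in }\Omega_{r,n})=n-(n-r)=r$ and $\dim(\text{copaving in }\overline{\Omega}_{r,n})=\sum_{j=n-r}^{n}\binom{n}{j}-1$, i.e.\ the paving formulas with $r$ and $n-r$ interchanged.
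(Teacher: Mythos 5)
Your proposal is correct and takes essentially the same route as the paper: facialness comes from the girth/independent-set-count valuation (you invoke Proposition~\ref{prop:simple-face} at $m=r$, which is exactly the argument the paper runs), and both dimension formulas are reduced via Lemma~\ref{lemma:girth} and the coordinate-sum-one constraint of Proposition~\ref{prop:dimensions-of-polytopes} to counting paving Schubert matroids. Your enumeration by cyclic-flat chains (including the careful exclusion of $|Z|=n-1$, which forces a coloop) and your explicit duality map $e_{[\mathsf{S}]}\mapsto e_{[\mathsf{S}^*]}$ for the copaving case are just variant executions of the paper's lattice-path labelling count and its appeal to symmetry, and they yield the same totals $n-r+1$ and $\sum_{j=r}^{n}\binom{n}{j}$.
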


\begin{proof}
    Reasoning as in the proof of Proposition~\ref{prop:simple-face}, the map $\M\mapsto I_{r-1}$, where $I_{r-1}$ denotes the number of  independent sets of rank $r-1$ in $\M$, is a valuation attaining its the maximum value $\binom{n}{r-1}$ exactly at paving matroids. Thus, paving matroids correspond to faces of $\Omega_{r,n}$ and $\overline{\Omega}_{r,n}$. 
    
    From Lemma~\ref{lemma:girth}, we have that a matroid is paving if and only if all the Schubert matroids appearing on the right-hand-side of equation~\eqref{eq:integer-combination-of-Schuberts} with non-zero coefficients are paving and Schubert. Thus, to compute the dimension of the ``paving face'' of $\Omega_{r,n}$ and $\overline{\Omega}_{r,n}$, we shall enumerate paving Schubert matroids. The class of paving Schubert matroids is closely related to the ``panhandle matroids'' studied in \cite{hanely-et-al}. For every $1\leq r\leq n$ we have:
    \begin{align*}
        \dim(\text{paving matroids in $\Omega_{r,n}$}) &= \#\left\{[\mathsf{S}]\in \mathcal{S}_{r,n} : \mathsf{S} \text{ paving}\right\} - 1 = n-r, \\
        \dim(\text{paving matroids in $\overline{\Omega}_{r,n}$}) & = \#\left\{\mathsf{S}\in \overline{\mathcal{S}}_{r,n} : \mathsf{S} \text{ paving}\right\} - 1= \sum_{j=r}^{n} \binom{n}{j} - 1.
    \end{align*}
    The first equation is a consequence of Remark~\ref{remark:girth-of-schubert} because isomorphism classes of paving Schubert matroids correspond to lattice paths starting with $r-1$ vertical steps, leaving $n-r+1$ possibilities of where the $r$\/th vertical step is taken. The second equality follows from the fact that the isomorphism class corresponding to the path starting with $r-1$ upward steps and having the $r$\/th upward step in position $j$ can be labelled in $\binom{n}{j}$ different ways.
\end{proof}

\begin{proposition}
    The class of sparse paving matroids is facial in $\Omega_{r,n}$ and $\overline{\Omega}_{r,n}$. Moreover,
    \begin{align*}
        \dim(\text{sparse paving matroids in $\Omega_{r,n}$}) &= 1, \\
        \dim(\text{sparse paving matroids in $\overline{\Omega}_{r,n}$}) &= \binom{n}{r}.
    \end{align*}
\end{proposition}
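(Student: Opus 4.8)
The plan is to follow the two-step pattern of Propositions~\ref{prop:simple-face} and~\ref{prop:paving-copaving-faces}: first exhibit the sparse paving matroids as a face, then enumerate the sparse paving Schubert matroids and read off the two dimensions via Lemma~\ref{lemma:girth}.

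For faciality, recall that $\M$ is sparse paving exactly when both $\M$ and $\M^*$ are paving. By Proposition~\ref{prop:paving-copaving-faces} the paving matroids form a face and the copaving matroids form a face, in each of $\Omega_{r,n}$ and $\overline{\Omega}_{r,n}$; the intersection of two faces is a face, and a point $p_{[\M]}$ (resp.\ $\overline{p}_{\M}$) lies on it precisely when $\M$ is paving and copaving, i.e.\ sparse paving. So the sparse paving class is facial. (Equivalently one can use the single valuative invariant $\M\mapsto I_{r-1}(\M)+I_{n-r-1}(\M^*)$, whose maximum is attained exactly at sparse paving matroids.)

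For the dimensions I would reduce to counting sparse paving Schubert matroids, exactly as in the proofs of Propositions~\ref{prop:simple-face},~\ref{prop:paving-copaving-faces} and~\ref{prop:dimensions-of-polytopes}. Applying Lemma~\ref{lemma:girth} to $\M$ and to $\M^*$---using, as observed in Section~\ref{sec:four}, that dualizing the expansion~\eqref{eq:integer-combination-of-Schuberts} of $\M$ gives that of $\M^*$ term by term, so that $\overline{\mathcal S}(\M^*)=\{\mathsf S^*:\mathsf S\in\overline{\mathcal S}(\M)\}$---shows that $\M$ is sparse paving iff every Schubert matroid occurring in~\eqref{eq:integer-combination-of-Schuberts} with nonzero coefficient is sparse paving. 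Hence the sparse paving face lies in the coordinate subspace indexed by sparse paving Schubert matroids, and contains each of the corresponding standard basis vectors $p_{[\mathsf S]}$ (resp.\ $\overline{p}_{\mathsf S}$), so its dimension is one less than the number of such Schubert matroids.

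It remains to enumerate them, which I would do in the lattice path picture. A Schubert matroid drawn as a lattice path matroid with lower path the lower-right border is paving iff its upper path begins with at least $r-1$ vertical steps (Remark~\ref{remark:girth-of-schubert}); and since the dual of such a matroid is again one of this kind with upper path obtained by reflecting the diagram, applying the same remark to the dual shows it is copaving iff its upper path ends with at least $n-r-1$ horizontal steps. An upper path has length $n$ with exactly $r$ vertical steps, so a path meeting both conditions begins with $(0,1)^{r-1}$ and ends with $(1,0)^{n-r-1}$, leaving two middle steps that are one vertical and one horizontal in one of the two orders: either $(0,1)^{r}(1,0)^{n-r}$, which is $\U_{r,n}$, or $(0,1)^{r-1}(1,0)(0,1)(1,0)^{n-r-1}$, which is the matroid $\Lambda_{r,n}$ whose unique non-basis is, up to relabelling, $\{1,\dots,r\}$ (a circuit-hyperplane). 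Thus for $1\le r\le n-1$ there are exactly two isomorphism classes of sparse paving Schubert matroids, giving $\dim(\text{sparse paving in }\Omega_{r,n})=1$. For the labelled count, $\U_{r,n}$ has a single labelling, whereas the automorphisms of $\Lambda_{r,n}$ are precisely the permutations of $[n]$ fixing $\{1,\dots,r\}$ setwise, so $\Aut(\Lambda_{r,n})\cong\mathfrak S_{r}\times\mathfrak S_{n-r}$ and $\Lambda_{r,n}$ has $n!/\bigl(r!\,(n-r)!\bigr)=\binom nr$ labellings; hence there are $1+\binom nr$ labelled sparse paving Schubert matroids, giving $\dim(\text{sparse paving in }\overline{\Omega}_{r,n})=\binom nr$. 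The only delicate point is the reduction to Schubert matroids (checking the dual form of Lemma~\ref{lemma:girth} and that, as in Proposition~\ref{prop:dimensions-of-polytopes}, the face is trapped in, and spans, the sparse paving Schubert coordinate subspace); the degenerate cases $r\in\{0,n\}$, where the formulas need an obvious correction, and $r\in\{1,n-1\}$, where $\Lambda_{r,n}$ is disconnected, are handled by hand.
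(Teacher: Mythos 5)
Your proof is correct and follows essentially the same route as the paper: faciality comes from intersecting the paving and copaving faces of Proposition~\ref{prop:paving-copaving-faces}, and the dimensions come from enumerating sparse paving Schubert matroids (two isomorphism classes, $\binom{n}{r}+1$ labelled ones). The extra details you supply---the dual form of Lemma~\ref{lemma:girth} via term-by-term dualization of the Schubert expansion, and the count of labellings by the unique non-basis---are exactly the steps the paper leaves implicit.
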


\begin{proof}
    In both polytopes, the intersection of the faces coming from paving and copaving matroids yields the desired sparse paving face. For the computation of the dimensions we need to enumerate sparse paving Schubert matroids. There are only two isomorphism classes: the uniform matroid, and the matroid obtained from the uniform matroid by removing exactly one basis. There are $\binom{n}{r}+1$ (labelled) sparse paving Schubert matroids: the isomorphism class of the uniform matroid admits only one labelling, whereas the other one admits $\binom{n}{r}$ (which is determined by the only non-basis). 
\end{proof}

Given that sparse paving matroids form an edge in $\Omega_{r,n}$, one of whose vertices is the uniform matroid, it is natural to ask for the other vertex. 

\begin{theorem}\label{thm:extremal-sparse-paving}
    Let $\M\in \mathcal{M}_{r,n}$ be a sparse paving matroid. The following two assertions are equivalent:
    \begin{enumerate}[\normalfont (i)]
        \item $\M$ is an extremal matroid.
        \item $\M$ is a uniform matroid, or $\M$ maximizes the number of non-bases.
    \end{enumerate}
\end{theorem}

\begin{proof}
    (i) $\Rightarrow$ (ii). Assume that $\M$ is sparse paving and extremal. Notice that for a sparse paving matroid circuit-hyperplanes and non-bases coincide. Assume that $f:\mathcal{M}_{r,n}\to \mathbb{R}$ is a valuative invariant that attains its maximum at $\M$ and no other point of $\Omega_{r,n}$. Using \cite[Corollary~5.4]{ferroni-schroter}, we compute that
        \[ f(\M) = f(\U_{r,n}) - \lambda( f(\mathsf{T}_{r,n}) - f(\U_{r-1,r}\oplus \U_{1,n-r})).\]
    where $\lambda$ denotes the number of non-bases of $\M$, and $\mathsf{T}_{r,n}$ is the minimal matroid. 
    Let us denote $\alpha:= f(\U_{r,n})$ and $\beta:= f(\mathsf{T}_{r,n}) - f(\U_{r-1,r}\oplus \U_{1,n-r})$. Since $f$ attains its maximum at $\M$, if $\N$ is another sparse paving matroid we must have the inequality
    \[\alpha + |\{\text{non-bases of $\N$}\}|\cdot\beta \leq \alpha + |\{\text{non-bases of $\M$}\}|\cdot\beta\] 
    Depending on whether $\beta< 0$ or $\beta> 0$, this says that either $\lambda=0$ and hence $\M$ is the uniform matroid, or $\lambda$ is precisely the maximum possible number of non-bases for sparse paving matroids of this rank and size.
    (The case $\beta=0$ is excluded as it fails to select a single vertex of $\Omega_{r,n}$.)
    
    \noindent (ii) $\Rightarrow$ (i). We will use Lemma~\ref{lemma:extremal-sequence-invariants}. We already know that uniform matroids are extremal, so let us assume that $\M$ is a sparse paving matroid with the maximal possible number of non-bases. Consider the following two valuative invariants:
        \begin{align*}
            f(\M) &= [x^1]T_{\M}(x+1,1) + [y^1]T_{\M}(1,y+1),\\
            g(\M) &= \binom{n}{r} - T_{\M}(1,1).
        \end{align*}
    The first enumerates the independent sets of rank $r-1$ in $\M$ and the independent sets of rank $n-r-1$ in $\M^*$. This invariant is always at most $\binom{n}{r-1} + \binom{n}{n-r-1}$, and equality is attained only for sparse paving matroids. The second invariant counts the number of non-bases of $\M$. The matroids described in (ii) are exactly the ones that maximize $g$ among all the matroids that maximize $f$. Also, from \cite[Corollary~5.4]{ferroni-schroter} it follows that all these matroids yield the same point $p\in \mathbb{R}^{\mathcal{S}_{r,n}}$.
\end{proof}

The preceding result can be used to show that other famous families of matroids \emph{are not} facial.

\begin{corollary}
    The family of transversal matroids is not facial in $\Omega_{r,n}$. Similarly, the family of (matroids that under some ground set ordering become) positroids is not facial in $\Omega_{r,n}$. 
\end{corollary}

\begin{proof}
    To see this, notice that the matroid obtained by removing only one basis from $\U_{r,n}$ is transversal and isomorphic to a positroid. Since $\U_{r,n}$ itself is a transversal matroid and a positroid, if either of these two classes spanned a face of $\Omega_{r,n}$, then actually all extremal sparse paving matroids should be transversal matroids or isomorphic to positroids. One can check this does not happen for transversal matroids when $(r,n)=(2,6)$, or for positroids when $(r,n)=(3,6)$.
\end{proof}

\begin{remark}
    A longstanding conjecture asserts that sparse paving matroids ``predominate'' among matroids; see \cite{mayhew-etal} for a more precise statement. If this conjecture is true, then the number of vertices of the polytope $\Omega_{r,n}$ is negligible compared to the size of $\mathcal{M}_{r,n}$, because there are only two vertices populated by sparse paving matroids. A subtle caveat is that this is not the same as asserting that the number of isomorphism classes of extremal matroids is negligible when compared to the size of $\mathcal{M}_{r,n}$: proving this assertion would require a careful analysis of how many distinct isomorphism classes of sparse paving matroids sit at the same point of $\Omega_{r,n}$. We refer to Table~\ref{table:number-matroids} for a comparison between the number of vertices of $\Omega_{r,n}$ and the number of isomorphism classes of extremal matroids in $\mathcal{M}_{r,n}$.
\end{remark}

A generalization of paving and sparse paving matroids arises when considering compatible splits of a hypersimplex, as in the work of Joswig and Schr\"oter \cite{joswig-schroter}. In that paper they defined the class of \emph{split matroids}, which were later slightly modified in \cite{berczi} and \cite{ferroni-schroter}, giving rise to what are known as \emph{elementary split matroids}. The classes of split and elementary split matroids are essentially equal; they only differ on disconnected matroids. A matroid is said to be \emph{elementary split} if its nontrivial cyclic flats are pairwise incomparable.  Equivalently, these are the matroids that do not contain $\U_{0,1}\oplus\U_{1,1}\oplus \U_{1,2}$ as a minor \cite[Theorem~11]{berczi}.
A further characterization in terms of relaxations is found in \cite[Theorem~4.8]{ferroni-schroter}.

The following is a reformulation of one of the main results of Ferroni and Schr\"oter \cite[Theorem~5.3]{ferroni-schroter}.

\begin{theorem}
    Let $\M\in\overline{\mathcal{M}}_{r,n}$ be a matroid. Denote by $\overline{\mathcal{S}}(\M)$ the set of all the Schubert matroids $\mathsf{S}\in\overline{\mathcal{S}}_{r,n}$ for which the coefficients in equation~\eqref{eq:integer-combination-of-Schuberts} are non-zero. Then $\M$ is elementary split if and only if all the matroids in $\overline{\mathcal{S}}(\M)$ are elementary split.
\end{theorem}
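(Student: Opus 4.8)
The plan is to leverage the structural characterization of elementary split matroids in terms of cyclic flats together with the combinatorial description of the Schubert expansion provided by Theorem~\ref{thm:indicator-functions-schubert}. Recall that $\indicator{\mathscr{P}(\M)}$ expands as $\sum_{\mathrm C \neq \widehat{\mathbf 1}} \lambda_{\mathrm C}\,\indicator{\mathscr{P}(\mathsf S_{\mathrm C})}$, where $\mathrm C$ ranges over chains of cyclic flats of $\M$ containing the minimal and maximal cyclic flats, $\lambda_{\mathrm C} = -\mu(\mathrm C,\widehat{\mathbf 1})$ in the cyclic chain lattice $\mathcal C_{\mathscr Z}(\M)$, and $\mathsf S_{\mathrm C}$ is the Schubert matroid whose lattice of cyclic flats is exactly the chain $\mathrm C$. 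So $\overline{\mathcal S}(\M)$ is the set of $\mathsf S_{\mathrm C}$ for which $\lambda_{\mathrm C}\neq 0$.

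First I would handle the easy direction. If all matroids in $\overline{\mathcal S}(\M)$ are elementary split, I want to conclude $\M$ is elementary split. A Schubert matroid $\mathsf S_{\mathrm C}$ is elementary split precisely when its nontrivial cyclic flats are pairwise incomparable; since they already form a chain $\mathrm C$, this forces $\mathrm C$ to have \emph{at most one} nontrivial member (i.e.\ $\mathrm C$ consists of the bottom cyclic flat, the top cyclic flat, and at most one flat strictly in between). So the hypothesis says every chain $\mathrm C$ with $\lambda_{\mathrm C}\neq 0$ has length $\le 1$ (measuring by the number of intermediate flats). Now suppose for contradiction that $\M$ is not elementary split, so $\mathscr Z(\M)$ has two comparable nontrivial cyclic flats $F \subsetneq G$. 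I would build a maximal chain $\mathrm C_{\max}$ of $\mathscr Z(\M)$ through both $F$ and $G$; this is a maximal element among the proper elements of $\mathcal C_{\mathscr Z}(\M)$, so $\mu(\mathrm C_{\max},\widehat{\mathbf 1}) = -1$, giving $\lambda_{\mathrm C_{\max}} = 1 \neq 0$, yet $\mathrm C_{\max}$ contains at least two intermediate flats — contradiction. (Here I use the standard fact that the Möbius function of an interval $[\hat x, \hat 1]$ where $\hat x$ is a coatom equals $-1$; more carefully, any chain containing $F$ and $G$ is covered by some longer chain, and I should argue directly that some chain of length $\ge 2$ has nonzero $\lambda$, which follows from $\sum_{\mathrm C}\lambda_{\mathrm C}=1$ combined with the fact that the length-$\le 1$ chains form a \emph{proper} sub-poset whose alternating sum of Möbius values cannot be $1$ unless longer chains are absent.)

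For the converse — if $\M$ is elementary split, all $\mathsf S_{\mathrm C}$ with $\lambda_{\mathrm C}\neq 0$ are elementary split — the situation is cleaner, because being elementary split passes to the relevant Schubert matroids structurally. If $\M$ is elementary split, then $\mathscr Z(\M)$ has no two comparable nontrivial cyclic flats, so every chain $\mathrm C$ of cyclic flats of $\M$ containing top and bottom has at most one intermediate flat, whence $\mathsf S_{\mathrm C}$ — whose cyclic flat lattice is $\mathrm C$ — trivially has pairwise incomparable nontrivial cyclic flats and is therefore elementary split. This direction requires no vanishing-coefficient input at all; it is purely a statement about which chains $\mathrm C$ even appear in the poset $\mathcal C_{\mathscr Z}(\M)$. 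Alternatively, one can simply cite \cite[Theorem~5.3]{ferroni-schroter} directly, as the statement asserts it is a reformulation of that result; then the work reduces to translating the relaxation-theoretic or cyclic-flat language of that theorem into the Schubert-coefficient language via Theorem~\ref{thm:indicator-functions-schubert}, checking that the Schubert matroids appearing there with nonzero coefficient are exactly $\overline{\mathcal S}(\M)$.

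The main obstacle is the nonvanishing claim in the hard direction: from ``$\M$ not elementary split'' I must produce a chain $\mathrm C$ of length $\ge 2$ with $\lambda_{\mathrm C} = -\mu(\mathrm C,\widehat{\mathbf 1}) \neq 0$, rather than merely a chain that exists in the poset. The clean way is to take $\mathrm C$ to be a \emph{maximal} proper element of $\mathcal C_{\mathscr Z}(\M)$ passing through both comparable flats $F\subsetneq G$: a coatom of a lattice has Möbius value $-1$ with the top, so $\lambda_{\mathrm C}=1$. I would need to confirm that $\mathcal C_{\mathscr Z}(\M)$ is indeed a lattice (it is, by the definition given — adjoining a top to the poset of chains ordered by containment) and that a maximal chain of $\mathscr Z(\M)$ through $F$ and $G$ is a coatom of it; the latter is essentially immediate since its only cover is $\widehat{\mathbf 1}$. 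Once that is in hand, both directions are short, and I expect the total proof to be under a page, modulo the citation to \cite{ferroni-schroter}.
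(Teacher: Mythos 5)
Your route is essentially correct, and it is genuinely different from the paper's: the paper offers no argument for this statement at all, presenting it as a reformulation of \cite[Theorem~5.3]{ferroni-schroter}, whereas you derive it directly from the chain expansion of Theorem~\ref{thm:indicator-functions-schubert}. Your key step for the hard direction is right and needs no lattice structure: a maximal chain $\mathrm{C}_{\max}$ of $\mathscr{Z}(\M)$ through two comparable nontrivial cyclic flats $F\subsetneq G$ is a coatom of $\mathcal{C}_{\mathscr{Z}}(\M)$, so $\lambda_{\mathrm{C}_{\max}}=-\mu(\mathrm{C}_{\max},\widehat{\mathbf{1}})=1\neq 0$ (any coatom of a finite poset with a top has M\"obius value $-1$); the hedged parenthetical about alternating sums of M\"obius values is not an argument and should simply be deleted in favour of this. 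One small point worth a sentence: since $\mathrm{C}$ is recovered from $\mathsf{S}_{\mathrm{C}}$ as its set of cyclic flats, the map $\mathrm{C}\mapsto\mathsf{S}_{\mathrm{C}}$ is injective, so by the uniqueness in Theorem~\ref{thm:DF} one indeed has $\overline{\mathcal{S}}(\M)=\{\mathsf{S}_{\mathrm{C}}:\lambda_{\mathrm{C}}\neq 0\}$, which you use implicitly. What your approach buys is a self-contained proof within the machinery the paper already sets up; what the citation buys is brevity.

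The step you must repair is the recurring identification of ``nontrivial cyclic flats of $\mathsf{S}_{\mathrm{C}}$'' with ``flats strictly between the bottom and top of the chain $\mathrm{C}$''. The chains in $\mathcal{C}_{\mathscr{Z}}(\M)$ contain the minimum and maximum of $\mathscr{Z}(\M)$, namely the set of loops and the complement of the set of coloops, and these endpoints can themselves be nontrivial (i.e.\ different from $\varnothing$ and $[n]$). Under your reading the excluded minor $\U_{0,1}\oplus\U_{1,1}\oplus\U_{1,2}$ would count as elementary split, which it is not: its two comparable nontrivial cyclic flats are exactly the endpoints of its unique (maximal) chain of cyclic flats, which has no intermediate members, so for such an $\M$ your stated contradiction ``$\mathrm{C}_{\max}$ contains at least two intermediate flats'' is literally false even though $\mathsf{S}_{\mathrm{C}_{\max}}$ is indeed not elementary split. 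The fix is immediate and uniform: phrase everything in terms of the members of $\mathrm{C}$ other than $\varnothing$ and $[n]$, which are precisely the nontrivial cyclic flats of $\mathsf{S}_{\mathrm{C}}$. For the ``if'' direction, any two such members are comparable nontrivial cyclic flats of $\M$, so when $\M$ is elementary split there is at most one of them and $\mathsf{S}_{\mathrm{C}}$ is elementary split (no case analysis on loops or coloops needed); for the ``only if'' direction, $F$ and $G$ are such members of $\mathrm{C}_{\max}$, so $\mathsf{S}_{\mathrm{C}_{\max}}\in\overline{\mathcal{S}}(\M)$ fails to be elementary split, giving the contradiction. With that rewording both directions are complete.
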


Using the preceding theorem similarly to how we have used Lemma~\ref{lemma:girth} above, we can deduce that elementary split matroids also form a face for our polytopes.

\begin{proposition}\label{prop:split-face}
    The class of elementary split matroids is facial in $\Omega_{r,n}$ and $\overline{\Omega}_{r,n}$. Moreover,
    \begin{align*}
        \dim(\text{elementary split matroids in $\Omega_{r,n}$}) &= r(n-r), \\
        \dim(\text{elementary split matroids in $\overline{\Omega}_{r,n}$}) &= \binom{n}{r} + \sum_{i=1}^r \sum_{j=1}^{n-r} \binom{n}{i+j} -1.
    \end{align*}
\end{proposition}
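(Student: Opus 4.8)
The plan is to mirror the proofs of Propositions~\ref{prop:simple-face} and~\ref{prop:paving-copaving-faces}. First I would exhibit a valuative \emph{invariant} whose extreme value is attained at exactly the elementary split matroids; since it is an invariant (hence also a valuative function after pulling back along $\overline{\mathcal{M}}_{r,n}\to\mathcal{M}_{r,n}$), this simultaneously makes the class facial in $\Omega_{r,n}$ and in $\overline{\Omega}_{r,n}$. Then I would read off the dimensions from the reformulation of \cite{ferroni-schroter} stated just above this proposition --- which here plays the role that Lemma~\ref{lemma:girth} played in Proposition~\ref{prop:simple-face} --- by enumerating the elementary split Schubert matroids. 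As always (cf.\ Remark~\ref{rem:nonisomorphic-same-vertex}) several isomorphism classes may sit at a common point of this face, but that is harmless for the notion of being facial.

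For the facial part the target is a valuative invariant $g$ with $g(\M)\le c$ for every $\M$ and $g(\M)=c$ precisely when $\M$ is elementary split. Two routes suggest themselves. (a)~Using the excluded-minor characterization ($\U_{0,1}\oplus\U_{1,1}\oplus\U_{1,2}$-minor-free) of \cite{berczi}, take $g$ to be minus a suitable weighted count of the sub-configurations of $\M$ that encode a minor isomorphic to $\U_{0,1}\oplus\U_{1,1}\oplus\U_{1,2}$; such counts of circuits, flats, and related data are valuative by the machinery of Bonin and Kung \cite{bonin-kung}, exactly as in Example~\ref{example:thickening-uniform}. (b)~Alternatively, one can bypass a combinatorial invariant entirely: by the theorem above, $\M$ is elementary split iff, in the Schubert expansion~\eqref{eq:integer-combination-of-Schuberts}, every coefficient $a_{\mathsf{S}}$ attached to a non-elementary-split Schubert matroid $\mathsf{S}$ vanishes, so it suffices to produce a linear combination of these coordinates that is sign-definite on the polytope and vanishes only on that affine slice; one inspects the M\"obius-function computation behind Theorem~\ref{thm:indicator-functions-schubert} to exhibit such a combination. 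Either way, the class of elementary split matroids is realized as the maximal face of a valuative functional.

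For the dimensions: by the theorem above, the Schubert matroids occurring with non-zero coefficient in the expansion of an elementary split $\M$ (resp.\ $[\M]$) are themselves elementary split, so the face lies in the coordinate subspace of $\mathbb{R}^{\overline{\mathcal{S}}_{r,n}}$ (resp.\ $\mathbb{R}^{\mathcal{S}_{r,n}}$) spanned by the elementary split Schubert matroids; conversely, each such Schubert matroid, being elementary split, contributes its standard basis vector to the face, so the face is full-dimensional in that subspace and its dimension equals $\#\{\text{elementary split Schuberts}\}-1$ (cf.\ Proposition~\ref{prop:dimensions-of-polytopes}). It then remains to count. A Schubert matroid has all of its cyclic flats on a chain, hence it is elementary split iff it has at most one nontrivial cyclic flat; an elementary split Schubert matroid which is not uniform has exactly one nontrivial cyclic flat $Z$, and $\M|Z$ and $\M/Z$ are then both uniform, so its isomorphism class is pinned down by $\rk_{\M}(Z)$ and $|Z|$ --- with the degenerate possibilities that $Z$ is the set of loops, or $E$ minus the set of coloops, or both, handled separately. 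Bucketing by the rank and corank of $Z$ gives $\#\{[\mathsf{S}]\in\mathcal{S}_{r,n}:\mathsf{S}\text{ elementary split}\}=r(n-r)+1$; labelling such a class amounts to choosing the ground set of $Z$, which yields $\#\{\mathsf{S}\in\overline{\mathcal{S}}_{r,n}:\mathsf{S}\text{ elementary split}\}=\binom{n}{r}+\sum_{i=1}^{r}\sum_{j=1}^{n-r}\binom{n}{i+j}$. (As in Proposition~\ref{prop:paving-copaving-faces}, these counts can also be extracted from the enumeration of panhandle matroids in \cite{hanely-et-al}.) Subtracting $1$ gives the two displayed formulas.

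The principal obstacle is the facial part, specifically proving that the extremum of $g$ --- equivalently, that the supporting hyperplane --- is attained at \emph{exactly} the elementary split matroids and at no others; this is where the structure theory of (elementary) split matroids from \cite{ferroni-schroter, berczi} must be invoked, either to control the signs of the non-elementary-split Schubert coordinates over the whole polytope (route (b)) or to pin down the maximum locus of the excluded-minor count (route (a)). The subsequent enumeration of elementary split Schubert matroids, once the ``at most one nontrivial cyclic flat'' reduction is in hand, is routine bookkeeping.
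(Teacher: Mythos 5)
Your enumeration of elementary split Schubert matroids and the resulting dimension formulas are correct and essentially identical to the paper's argument: the paper also reduces to the observation that a Schubert matroid is elementary split exactly when it has at most one nontrivial cyclic flat, counts the $r(n-r)+1$ isomorphism classes of such ``cuspidal'' matroids (citing \cite[Proposition~4.1]{ferroni-schroter-tutte} and the lattice-path picture of Figure~\ref{fig:cuspidal}), and obtains the labelled count by choosing the ground set of the unique nontrivial cyclic flat. So the second half of your proposal is fine.

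The genuine gap is the first assertion, faciality, and you have in effect conceded it yourself by calling it ``the principal obstacle'': neither of your two routes is carried out. Route (a) rests on the claim that a weighted count of configurations witnessing a $\U_{0,1}\oplus\U_{1,1}\oplus\U_{1,2}$ minor is valuative ``by the machinery of Bonin and Kung''; that machinery gives valuativity of statistics such as the number of circuits or flats of prescribed rank and size, not of counts of minors of a fixed isomorphism type (nor of cyclic-flat statistics, which is what such a count essentially amounts to here), so this premise is unsupported and the route would likely fail as stated. Route (b) is closer to what the paper actually does: the paper derives faciality from the restated theorem of Ferroni--Schr\"oter \cite[Theorem~5.3]{ferroni-schroter}, namely that $\M$ is elementary split if and only if every Schubert matroid with nonzero coefficient in \eqref{eq:integer-combination-of-Schuberts} is elementary split, used in the same way Lemma~\ref{lemma:girth} was used before; your route (b) starts from this same characterization but then asks for a sign-definite linear combination of the non-elementary-split Schubert coordinates to be ``exhibited by inspecting the M\"obius-function computation,'' which is precisely the content that would need to be proved (a supporting functional exists only if the set is a face, so one cannot simply posit it). In short: the counting is correct and matches the paper, but the facial claim --- the main structural statement of Proposition~\ref{prop:split-face} --- is not established by your proposal; to complete it you would need to actually invoke the support characterization from \cite{ferroni-schroter} and turn it into a supporting hyperplane argument, rather than leave that step as an acknowledged obstacle.
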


\begin{proof}
    By the above discussion, if $\M$ is elementary split, then all the Schubert matroids $\mathsf{S}$ appearing on the right~hand side of equation~\eqref{eq:integer-combination-of-Schuberts} are Schubert and elementary split. (The arXiv version of \cite{ferroni-schroter} names this class the ``cuspidal matroids''.)
    A computation in \cite[Proposition~4.1]{ferroni-schroter-tutte} shows that there are exactly $r(n-r)+1$ such matroids, whence the first equation in this statement follows. The idea is to consider Schubert elementary split  matroids as lattice path matroids: see for example Figure~\ref{fig:cuspidal}. The isomorphism class is determined by the location of the green point. It can be located at any point $(i,j)$ where $1\leq i\leq n-r$ and $1\leq j\leq r$ or, alternatively, on the top border of the rectangle, in which case its exact location is irrelevant and the matroid is  uniform. Hence, there are $r(n-r)+1$ non-isomorphic Schubert elementary split matroids. To enumerate the labelled Schubert elementary split matroids, proceed as follows: once fixed the position $(i,j)$ there are exactly $\binom{n}{i+j}$ ways of choosing the $i+j$ elements in the unique non-trivial cyclic flat of the Schubert elementary split matroid. We leave the remaining details  to the interested reader.
\end{proof}

    \begin{figure}[ht]
        \centering
        \begin{tikzpicture}[scale=0.40, line width=.4pt]
        
        \draw[line width=1.5pt,line cap=round] (0,0)--(0,3) -- (5,3)--(5,7)--(9,7);
        \draw (0,0) grid (9,7);
        \draw[decoration={brace,raise=7pt},decorate]
            (0,0) -- node[left=7pt] {$r-s$} (0,3);
        \draw (5,0) grid (9,7);
        \draw[decoration={brace,mirror, raise=4pt},decorate]
        (0,0) -- node[below=7pt] {$n-r$} (9,0);
        
        \draw[decoration={brace, raise=5pt},decorate]
        (5,7) -- node[above=7pt] {$h-s$} (9,7); 
        
        \draw[decoration={brace, raise=5pt},decorate]
        (9,7) -- node[right=7pt] {$r$} (9,0); 
        \node[circle,fill=Green,scale=0.5] at (5,3) {};
        \end{tikzpicture}
        \caption{A Schubert elementary split matroid.}
        \label{fig:cuspidal}
    \end{figure}

\subsection{Less well-behaved classes}

A matroid is said to be \emph{modular} whenever $\rk(F_1) + \rk(F_2) = \rk(F_1\wedge F_2) + \rk(F_1\vee F_2)$ for every pair of flats $F_1,F_2\in\mathcal{L}(\M)$. 

\begin{proposition}
    The class of modular matroids is facial in $\Omega_{r,n}$ and $\overline{\Omega}_{r,n}$.
\end{proposition}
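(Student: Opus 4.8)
The strategy is the one used throughout this section: exhibit a single valuative invariant on $\mathcal{M}_{r,n}$ whose maximum over $\mathcal{M}_{r,n}$ is attained exactly at the modular matroids. Since the linear functionals on the ambient space $\mathbb{R}^{\mathcal{S}_{r,n}}$ of $\Omega_{r,n}$ are precisely the valuative invariants, such an invariant exhibits the modular matroids as exactly the matroid-points of a face, i.e.\ as a facial family; the statement for $\overline{\Omega}_{r,n}$ then follows from the lemma that a facial family of isomorphism classes in $\Omega_{r,n}$ lifts to a facial family of labelled matroids in $\overline{\Omega}_{r,n}$.

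Write $W_k(\M)$ for the number of flats of $\M$ of rank $k$ (a Whitney number of the second kind). The invariant I would use is
\[
    f(\M) \;:=\; W_1(\M) - W_{r-1}(\M).
\]
The relevant extremal input is a classical theorem on geometric lattices (Basterfield--Kelly, Greene): a simple matroid of rank $r$ has at least as many hyperplanes as points, i.e.\ $W_1 \le W_{r-1}$, with equality if and only if the matroid is modular. For an arbitrary $\M\in\mathcal{M}_{r,n}$ I would first pass to the simplification $\operatorname{si}(\M)$: deleting loops and all-but-one element of each parallel class does not change the lattice of flats, hence changes neither $W_1$, nor $W_{r-1}$, nor the property of being modular. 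Thus $f\le 0$ on all of $\mathcal{M}_{r,n}$, with $f(\M)=0$ precisely when $\M$ is modular, so the face of $\Omega_{r,n}$ on which the functional $f$ is maximized is exactly one whose matroid-points are those coming from modular matroids. (For $r\le 2$ one has $W_1 = W_{r-1}$ identically, so $f\equiv 0$ and the ``modular face'' is the whole polytope; this is consistent, since every matroid of rank $\le 2$ is modular. The case $r=0$ is trivial, as $\Omega_{0,n}$ is a point.)

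The one point that needs a genuine argument is that $W_1$ and $W_{r-1}$ --- indeed $W_k$ for every $k$ --- are valuative invariants. I would prove this by M\"obius inversion over supersets: for each $Z\subseteq[n]$,
\[
    \bigl[\,Z \text{ is a flat of }\M\text{ of rank }k\,\bigr] \;=\; \sum_{Y\supseteq Z}(-1)^{|Y\smallsetminus Z|}\,\bigl[\,\rk_\M(Z)=k \ \text{and}\ \rk_\M(Y)=k\,\bigr],
\]
because, once $\rk_\M(Z)=k$, the sets $Y\supseteq Z$ with $\rk_\M(Y)=k$ are exactly those with $Z\subseteq Y\subseteq\operatorname{cl}_\M(Z)$, and $\sum_{W\subseteq \operatorname{cl}_\M(Z)\smallsetminus Z}(-1)^{|W|}$ detects whether $Z$ is closed. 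Each indicator on the right-hand side is one of the rank-prescribing valuations $\Phi_{Z,\mathbf r}$ of Derksen--Fink \cite[Proposition~5.3]{derksen-fink} (the two-element chain $Z\subseteq Y$ with both prescribed ranks equal to $k$, or the one-element chain $Z$ when $Y=Z$), so summing over all $Z\subseteq[n]$ writes $W_k$ as a finite integer combination of valuations; it is visibly symmetric under $\mathfrak{S}_n$, hence a valuative invariant. For $W_{r-1}$ there is also a shorter route: hyperplanes are the complements of cocircuits, so $W_{r-1}(\M)$ equals the number of cocircuits of $\M$, which is valuative because the number of circuits of each cardinality is valuative by Bonin--Kung \cite[Corollary~5.3]{bonin-kung} (as used in Example~\ref{example:thickening-uniform}) and valuative relations are preserved under matroid duality, the base polytope of $\M^*$ being the reflection $x\mapsto\mathbf 1-x$ of $\mathscr{P}(\M)$.

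Assembling these pieces yields the proposition. I do not anticipate a real obstacle: the only non-routine ingredient is the equality case of the Basterfield--Kelly/Greene inequality --- that $W_1=W_{r-1}$ forces modularity --- which is a well-known fact about geometric lattices; the remaining work is the reduction to the simple case and the valuativity of $W_k$, both short.
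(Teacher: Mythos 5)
Your proof is correct and is essentially the paper's argument: the paper uses the same invariant (with the opposite sign, $W_{r-1}-W_1$, minimized) together with Greene's hyperplane theorem and its equality case to cut out exactly the modular matroids, then passes from $\Omega_{r,n}$ to $\overline{\Omega}_{r,n}$ just as you do. The only difference is bookkeeping: where you prove valuativity of $W_k$ by hand via M\"obius inversion over the Derksen--Fink valuations $\Phi_{Z,\mathbf r}$, the paper simply cites \cite[Corollary~8.4]{ferroni-schroter}, so your route is a self-contained (and correct) substitute for that citation.
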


\begin{proof}
    Consider the map $\M \mapsto W_{r-1} - W_1$ where $W_i$ denotes the number of rank~$i$ flats of~$\M$. By \cite[Corollary~8.4]{ferroni-schroter}, $W_{r-1}$ and $W_1$ are valuations. By Greene's hyperplane theorem \cite{greene}, our map always takes non-negative values, but attains the value zero only at modular matroids, which therefore must form a face of each of our two polytopes. 
\end{proof}

In contrast to the foregoing sections, it is not easy to describe the dimension of the faces that modular matroids span. Notice that it is not true that the non-zero coordinates of a modular matroid in the Schubert basis must necessarily correspond to modular Schubert matroids.

\begin{proposition}
    The class of series-parallel matroids corresponds to a hyperplane \emph{slice} of the polytopes $\Omega_{r,n}$ and $\overline{\Omega}_{r,n}$.
\end{proposition}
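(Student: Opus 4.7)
The plan is to invoke Brylawski's classical theorem that, for any matroid $\M$ with $|E(\M)|\ge 2$, one has $\beta(\M)=1$ if and only if $\M$ is series-parallel, where $\beta$ denotes Crapo's beta invariant. Since $\beta$ is a coefficient of the Tutte polynomial, \cite[Corollary~5.7]{ardila-fink-rincon} ensures it is a valuative invariant on $\mathcal{M}_{r,n}$. By Lemma~\ref{lemma:symindicator}, $\beta$ therefore induces a linear functional on the ambient space $\mathbb{R}^{\mathcal{S}_{r,n}}$ of $\Omega_{r,n}$, and pulling back along the projection described in Remark~\ref{rem:symmetrization-is-a-projection} yields an analogous linear functional on $\mathbb{R}^{\overline{\mathcal{S}}_{r,n}}$.

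I would then consider the affine hyperplane $H$ cut out by $\beta=1$ in each ambient space. By construction, $H\cap\Omega_{r,n}$ is a hyperplane slice of $\Omega_{r,n}$, and by Brylawski's theorem (assuming $n\ge 2$, the only non-trivial case) the isomorphism classes $[\M]$ for which $p_{[\M]}\in H$ are precisely the series-parallel ones. The same reasoning applies verbatim to $\overline{\Omega}_{r,n}$.

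The argument is essentially a one-line corollary of Brylawski's theorem, and I do not foresee a real obstacle. The only point worth emphasizing, and the reason the statement uses the word \emph{slice} rather than \emph{face}, is that the hyperplane $\beta=1$ is not supporting: $\beta$ takes values both strictly smaller and strictly larger than $1$ on $\mathcal{M}_{r,n}$ in all non-trivial cases, since disconnected matroids attain $\beta=0$ while $\U_{2,n}$ attains $\beta=n-2\ge 2$ whenever $n\ge 4$. Hence $H$ genuinely cuts through the relative interior of the polytopes, which contrasts qualitatively with all the faces described earlier in the section.
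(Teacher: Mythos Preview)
Your proof is correct and follows essentially the same approach as the paper: both invoke Brylawski's characterization of series-parallel matroids as those with $\beta(\M)=1$ and use the valuativity of $\beta$ (as a Tutte coefficient) to produce the desired affine hyperplane. Your added remark that $\beta$ attains values both below and above~$1$, explaining why one gets a genuine slice rather than a supporting hyperplane, parallels the observation the paper makes immediately after its proof.
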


In other words, there is an affine hyperplane in the ambient space of $\overline{\Omega}_{r,n}$, resp.\ $\Omega_{r,n}$,
passing through the point $\overline{p}_{\M}$, resp.\ $p_{[\M]}$,
exactly when $\M$ is series-parallel.

\begin{proof}
    A classical result due to Brylawski \cite{brylawski} characterizes series-parallel matroids on a ground set of size at least~$2$ precisely as those matroids whose $\beta$-invariant is $1$. Since the $\beta$-invariant is a valuation, the result follows.
\end{proof}

We observe that the slicing hyperplane in the above proposition does not support a face but is parallel to the hyperplane that supports the face of disconnected matroids, Proposition~\ref{prop:disconnected}.

\section{Representability}\label{sec:extremal-non-representable}

A representation of a matroid brings with it many tools for proving non-negativity of invariants of that matroid,
especially building an algebraic variety from the representation and then applying positivity theorems from algebraic geometry. 
It is often very hard to generalize a proof of this origin to arbitrary matroids. 
However, since in most cases the invariant under study is valuative, it is natural to ask whether 
a valuative invariant taking non-negative values at all representable matroids must take non-negative values in general.
If so, this would circumvent the need to carry out the above generalization. 
The next theorem shows that this is not the case.

\begin{theorem}\label{thm:extremal-non-representable}
    There exist valuative invariants $f:\mathcal{M}_{r,n}\to \mathbb{R}$ that attain non-negative values at all matroids that are representable over some field, yet do attain a negative value at a non-representable matroid.
\end{theorem}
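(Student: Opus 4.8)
The plan is to read the statement off the geometry of $\Omega_{r,n}$. Linear functionals on the ambient space $\mathbb{R}^{\mathcal S_{r,n}}$ are exactly the valuative invariants, so it suffices to produce, for some $r$ and $n$, a vertex $v$ of $\Omega_{r,n}$ such that \emph{every} isomorphism class $[\M]$ with $p_{[\M]}=v$ is representable over no field. Given such a $v$, I would use Lemma~\ref{lemma:extremal-sequence-invariants} to obtain a valuative invariant $f_0$ maximized over $\mathcal M_{r,n}$ exactly at the classes occupying $v$; if $m<f_0(v)$ is the next-largest value of $f_0$, then $f:=-f_0+\tfrac12\bigl(f_0(v)+m\bigr)$ is a negative combination of valuations plus a constant, hence valuative by Example~\ref{ex:constants-are-valuations}, it has $f(v)<0$, and $f(p_{[\N]})\ge\tfrac12\bigl(f_0(v)-m\bigr)>0$ for every class not occupying $v$, in particular for every representable matroid. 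So the whole problem reduces to exhibiting $v$.

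For $v$ I would exploit the sparse-paving edge. By Proposition~\ref{prop:extremal-sparse-paving} its two endpoints are $\U_{r,n}$ and a vertex occupied by the sparse paving matroids with the maximal number of circuit-hyperplanes; and since sparse paving matroids are facial in $\Omega_{r,n}$, a matroid whose point lies on that edge must be sparse paving, so by Proposition~\ref{coro:paving-same-point} its image in $\mathbb{R}^{\mathcal S_{r,n}}$ depends only on its number of circuit-hyperplanes. Hence this second vertex $v$ is occupied by exactly the sparse paving matroids of rank $r$ on $n$ elements achieving the maximum. A suitable $f_0$ for the previous paragraph is then $f_0=A\cdot f_1+f_2$ with $A$ large, where $f_1(\M)$ is the number of independent sets of rank $r-1$ in $\M$ plus the number of independent sets of rank $n-r-1$ in $\M^*$ (a valuation, cf.\ the proof of Proposition~\ref{prop:paving-copaving-faces}, maximized precisely at sparse paving matroids) and $f_2(\M)=\binom nr-|\mathscr B(\M)|$ is the number of circuit-hyperplanes.

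It remains to find $r,n$ for which every maximum sparse paving matroid is non-representable, and this is the crux. A sparse paving matroid of rank $r$ on $n$ elements is the same as a family of $r$-subsets of $[n]$ pairwise meeting in at most $r-2$ elements, and double counting $(r-1)$-subsets bounds such a family by $\binom n{r-1}/r$, with equality only for a Steiner system $S(r-1,r,n)$. When that design is unique and is not a classical projective or affine geometry---the Witt designs $S(4,5,11)$ and $S(5,6,12)$ being the natural candidates, giving $(r,n)=(5,11)$ or $(6,12)$---the vertex $v$ is occupied by a single isomorphism class $\mathsf W$, and one is reduced to proving that $\mathsf W$ is representable over no field at all. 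This simultaneous exclusion of every field is the main obstacle. I would attack it by combining the elementary observation that, in any representation, the hyperplane spanned by a block $B$ contains no point of $E\setminus B$ (which already rules out small finite fields by counting hyperplanes) with the structure of the derived designs obtained by contracting points and with the $4$- or $5$-transitive Mathieu automorphism group; should a clean synthetic argument prove elusive, a finite computer search over small $r,n$ for the minimal working pair, or a citation to the design-theory literature on representability, would close the argument.
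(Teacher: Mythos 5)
Your structural reduction is correct and coincides with the paper's: it suffices to exhibit a vertex of $\Omega_{r,n}$ such that every isomorphism class occupying it is non-representable, and your passage from such a vertex to the invariant $f$ (Lemma~\ref{lemma:extremal-sequence-invariants}, a large-multiple combination $A f_1+f_2$, and a constant shift, which is valuative by Example~\ref{ex:constants-are-valuations}) works. Your identification of the far endpoint of the sparse-paving edge is also fine: by faciality and Proposition~\ref{coro:paving-same-point}, that vertex is occupied exactly by the sparse paving matroids with the maximal number of circuit-hyperplanes, and the double-counting bound $\binom{n}{r-1}/r$ with equality exactly for Steiner systems $S(r-1,r,n)$ is correct.

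The genuine gap is that the entire content of Theorem~\ref{thm:extremal-non-representable} lies in proving, for a concrete $(r,n)$, that every matroid at the chosen vertex is representable over \emph{no} field, and you do not prove this. For your candidates $(r,n)=(5,11)$ or $(6,12)$ you defer the non-representability of the Witt-design sparse paving matroid to a hoped-for synthetic argument, a computer search, or an unspecified citation; none is supplied, and this is not a routine check: Steiner-system matroids can perfectly well be representable (the blocks of $S(2,3,7)$ give the Fano matroid over $\mathbb{F}_2$, and $S(2,3,9)$ gives $\mathrm{AG}(2,3)$ over $\mathbb{F}_3$), and excluding all fields simultaneously cannot be done by a naive finite computation. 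The paper sidesteps this by choosing a different vertex: simple rank-$3$ matroids on $19$ elements maximizing first the number of $7$-point lines (three, which are forced to be concurrent) and then the number of $3$-point lines ($36$, equivalent to a Latin square of order $6$); these are paving, so Proposition~\ref{coro:paving-same-point} puts them all at one point, Lemma~\ref{lemma:extremal-sequence-invariants} makes it a vertex, and non-representability follows from a short synthetic argument: placing the point of concurrency at infinity, the six finite points on one $7$-point line must form a coset of an additive subgroup of order $6$ of the field, which is impossible since the additive group of a field is a vector space over its prime field. To salvage your route you would have to prove (or correctly cite) that the matroids attached to $S(4,5,11)$ or $S(5,6,12)$ are non-representable over every field; as written, the crux of the theorem is missing.
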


\begin{proof}
    It suffices to show that for adequate choices of $n$ and~$r$, one may find a vertex $p$ of $\Omega_{r,n}$ having the property that all the isomorphism classes $[\M]\in \mathcal{M}_{r,n}$ for which $p_{[\M]} = p$ are non-representable.
    
    Let us consider simple matroids of rank~$3$ on $19$ elements with the maximum number of flats of rank~$2$ having size~$7$. Among those, consider the ones that attain the maximum number of flats of rank~$2$ having size~$3$. Let us denote by $\mathcal{C}\subseteq \mathcal{M}_{3,19}$ the resulting set of isomorphism classes. We will divide the proof into three steps. 
    \begin{enumerate}[Step (i)]
        \item We will characterize all the elements in $\mathcal{C}$.
        \item Using the characterization, we will show that all the matroids in $\mathcal{C}$ sit at the same point $p$. This will allow us to apply Lemma~\ref{lemma:extremal-sequence-invariants} and conclude that the point $p$ is a vertex.
        \item We will finally show that none of the elements in $\mathcal{C}$ is representable. 
    \end{enumerate}

    \noindent\emph{Step (i).} For a simple matroid $\M\in \mathcal{M}_{3,19}$ there are at most three flats of rank $2$ and size $7$. 
    To see this, think of the geometric representation of $\M$ as a configuration of points and (quasi)lines. Having four $7$-point lines is not possible, because being a simple matroid requires each pair of lines to intersect in at most one point, and $7\cdot 4 - \binom{4}{2} = 22 > 19$. One may easily produce a configuration with exactly three $7$-point lines.
    
    Provided that we have have exactly $3$ lines containing exactly $7$ points each, the maximum possible number of $3$-point lines is $36$. We first show this is an upper bound.
    Being simple guarantees that each pair of points of~$\M$ spans a rank $2$ flat (i.e., a line). However, if $\ell_i$ denotes the number of $i$-point lines, we have the equality
        \[ \sum_{i} \ell_i \binom{i}{2} = \binom{19}{2}.\]
    In particular,
        \[\ell_3\binom{3}{2} + \ell_7\binom{7}{2} \leq \binom{19}{2}.\]
    In the case of our interest, we have $\ell_7 = 3$, so that $\ell_3 \leq\frac{1}{3}\left(\binom{19}{2} - 3\binom{7}{2}\right) = 36$.
    
    By the reasoning above, in a configuration attaining $\ell_3=36$ (should one exist),
    we must have $\ell_i = 0$ for all $i\neq 3,7$. That is, every pair of points spans one of our $7$-point lines or $3$-point lines.  

    We now show that in any such potential configuration there cannot be a point $x$ lying on exactly two of the three $7$-point lines. Indeed, there are $12$ points on these two $7$-point lines different from $x$. Since every line contains $3$ points or $7$ points, the remaining $19-1-12=6$ points of the configuration must fall two each onto three $3$-point lines passing through $x$. However, in such a scenario there is no room for the third $7$-point line. 
    
    As there is also no room for three disjoint $7$-point lines on $19$ elements,
    the preceding contradiction tells us that in any of our potential matroids, the three $7$-point lines are concurrent. To see that $\ell_3=36$ is attainable in this case, notice that our problem is that of making a Latin square of order $6$, i.e., finding a $6\times 6$ matrix whose entries (sometimes called ``letters'') are integers from $1$ to $6$, in such a way that each letter appears exactly once in each row and each column. Think of the points on the $7$-point lines aside from the point of concurrency as indexing rows, columns, and letters.

    The geometric representation (as \cite{oxley} calls it) of one such matroid is shown in Figure~\ref{fig:extremal-19-3},
    with some of the $3$-point lines faded to gray to relieve the visual clutter.

    \begin{figure}[htb]
    \includegraphics[scale=0.95]{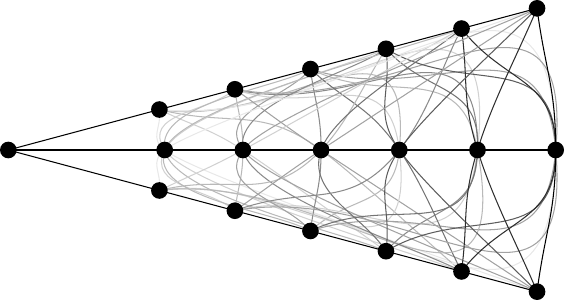}
    \caption{A nonrepresentable extremal rank~$3$ matroid on $19$ elements.}
    \label{fig:extremal-19-3}
    \end{figure}

    \noindent\emph{Step (ii).} Step~(i) shows that the matroids in our class $\mathcal{C}\subseteq \mathcal{M}_{19,3}$ are simple, and all of them have exactly $39$ rank~$2$ flats, $3$ of which have size~$7$, and $36$ of which have size~$3$. With this we can prove that all of them yield the same point $p\in \Omega_{3,19}$. Indeed, a simple matroid has girth is at least $3$, which in rank~$3$ implies that the matroid is paving, so that \cite[Theorem~5.3]{ferroni-schroter} applied to paving matroids implies that all these matroids sit at the same point. Now we can apply Lemma~\ref{lemma:extremal-sequence-invariants}, because the number of rank~$r$ flats of size~$s$ is a valuative invariant. We conclude that all of our matroids in $\mathcal{C}$ are extremal.
    
    \noindent\emph{Step (iii).} To see why none of the matroids in $\mathcal{C}$ is representable over any field $\mathbb{F}$, we proceed by contradiction. By the characterization above, any such matroid $\M$ has three $7$-point lines, say $L_0$, $L_1$, and $L_2$, that are concurrent at a point $x$. Let us take coordinates so that $x$ is a point at infinity, say the ``horizontal'' one. The three lines $L_0$, $L_1$, $L_2$ through $x$, written in affine coordinates, are of the form $L_i=\{(t,y_i) : t \in \mathbb{F}\}$ for some scalars $y_0,y_1,y_2\in \mathbb{F}$.  Given points $a = (t_a,y_0)$, $b = (t_b,y_0)$, $c = (t_c,y_0)$ on $L_0$, choose any $d$ on~$L_1$ and $f$ on~$L_2$ so that $adf$ is a line.  Let $bf$ meet $L_1$ in $e$, and $cd$ meet $L_2$ in $g$.  Then $eg$ meets $L_0$ in $b+c-a=(t_b+t_c-t_a, y_0)$.  This means that the $x$-coordinates of the finite points of~$\M$ lying on~$L_0$ form a coset of an additive subgroup of $\mathbb{F}$.  But there are six such points, and no field has an additive subgroup of order~$6$, contradiction.
\end{proof}

\begin{remark}
    Representability is preserved by adding or removing loops and coloops. Remark~\ref{remark:loops-coloops} also guarantees that extremality is preserved under adding or removing loops and coloops.  Therefore, the preceding construction extends, by adding suitable loops or coloops, to all pairs $(r,n)$ such that $r\geq 3$ and $n-r\geq 16$. 
\end{remark}

\begin{remark}
    Similarly to how Ingleton's inequality (see Oxley \cite[p.~169]{oxley}) provides a test for representability as a condition that the rank function must satisfy,
    one can interpret Theorem~\ref{thm:extremal-non-representable} as an assertion that there exists a valuative invariant which serves as a test for representability,
    at least in the cases $r\ge3$, $n-r\ge16$.
    It would be interesting to find more such valuative tests for representability.
\end{remark}

\section{Final remarks and conjectures}\label{sec:final-remarks}

\subsection{Extremal matroids and the pursuit of counterexamples in matroid theory}

Within matroid theory there remain many open problems asserting that some valuative invariant of matroids is non-negative.

These problems can be interpreted in our framework as a describing a linear functional on our polytopes $\Omega_{r,n}$. In particular, showing that the conjecture is true \text{is equivalent} to proving it for all extremal matroids. 

Let us comment briefly on two conjectures that have been disproved recently. 

\begin{itemize}
\item The conjecture by De Loera, Haws, and K\"oppe \cite{deloera-haws-koppe} on the positivity of the Ehrhart coefficients of a matroid polytope. This was disproved by Ferroni in \cite{ferroni3}, essentially by showing that the extremal matroids described in Theorem~\ref{thm:extremal-sparse-paving} attain a negative coefficient when $r$ and $n$ are sufficiently large. 

\item The Merino--Welsh conjecture for loopless and coloopless matroids \cite{merino-welsh}, which was settled negatively by Beke, Cs\'aji, Csikv\'ari, and Pituk \cite{beke-csaji-csikvari-pituk}. This conjecture can be proved to be equivalent\footnote{To be very precise, the conjecture asserted that for every matroid without loops and coloops $\max(T_{\M}(2,0),T_{\M}(0,2)) \geq T_{\M}(1,1)$. The validity of this inequality for all matroids without loops and coloops is equivalent to the validity of $T_{\M}(2,0)+T_{\M}(0,2)\geq 2\,T_{\M}(1,1)$ for all matroids without loops and coloops. Notice that, for a \emph{specific} matroid $\M$, the second inequality implies the first one, but perhaps not conversely.} to the Tutte polynomial inequality $T_{\M}(2,0)+T_{\M}(0,2) - 2\,T_{\M}(1,1)\geq 0$, which is a positivity conjecture for a valuative invariant. The extremal matroid violating this inequality is obtained for the choices $r=22$ and $n=33$ in Example~\ref{example:thickening-uniform}.
\end{itemize}
Although a characterization of all extremal matroids seems out of reach, we feel motivated to invite the reader to find more meaningful classes:

\begin{problem}
    Identify classes of matroids that are extremal.
\end{problem}

Specifically, Conjecture~\ref{conj:direct-sums} would provide an important such class.  
We also conjecture some particular answers to this problem, firstly:

\begin{conjecture}\label{conj:braid}
    For every $n\geq 1$, the graphic matroid $\mathsf{K}_n$ of the complete graph on $n$ vertices is an extremal matroid.
\end{conjecture}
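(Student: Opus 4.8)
The plan is to realize $[\mathsf{K}_n]$ as the unique point of $\Omega_{n-1,N}$ maximizing an explicit finite sequence of valuative invariants (here $N=\binom n2$ and $\mathsf{K}_n\in\mathcal{M}_{n-1,N}$), and then invoke Lemma~\ref{lemma:extremal-sequence-invariants}. As a first step I would pass to the simple face of $\Omega_{n-1,N}$ (Proposition~\ref{prop:simple-face}), i.e.\ begin with $f_1(\M)=I_2(\M)$, the number of independent $2$-subsets, which is valuative and is maximized precisely by the simple matroids. The main tool thereafter is the family of valuative invariants $\M\mapsto\#\{F\in\mathcal{L}(\M):\rk(F)=i,\ |F|=s\}$, whose valuativity follows from \cite{ferroni-schroter}. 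The relevant feature of $\mathsf{K}_n$ is that each of its flats is a disjoint union of cliques: a rank-$i$ flat corresponds to a partition of $[n]$ into $n-i$ blocks and has size $\sum_{B}\binom{|B|}2$, which by convexity is at most $\binom{i+1}2$, with equality exactly for the $\binom{n}{i+1}$ flats whose unique nonsingleton block is an $(i+1)$-set; such a flat, viewed as a restriction of $\mathsf{K}_n$, is itself isomorphic to $\mathsf{K}_{i+1}$.

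The core of the argument would be an induction on $n$ that processes flats in order of increasing rank $i=2,3,\dots,n-2$. Suppose $\mathsf{K}_{i+1}$ has already been characterized (for every $i+1<n$) as the surviving isomorphism class of the analogous procedure run in $\mathcal{M}_{i,\binom{i+1}2}$. At stage $i$, among the classes still selected, one would first \emph{minimize} $\#\{F:\rk(F)=i,\ |F|>\binom{i+1}2\}$, with the intended conclusion that the minimum is $0$, so that every rank-$i$ flat has ``graph-like'' size; then \emph{maximize} $\#\{F:\rk(F)=i,\ |F|=\binom{i+1}2\}$, with the intended conclusion that the maximum equals $\binom{n}{i+1}$ and that at the optimum each such flat --- whose internal structure through rank $i-1$ has been constrained by the earlier stages --- is forced by the inductive characterization to be isomorphic to $\mathsf{K}_{i+1}$. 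After stage $n-2$ one would hope that all surviving classes carry exactly the invariant data of $\mathsf{K}_n$, indeed have the same rank- and size-decorated cyclic-flat lattice as $\mathsf{K}_n$, and therefore (via Theorem~\ref{thm:indicator-functions-schubert} and Corollary~\ref{coro:isomorphism-classes-schubert}) land on the single point $p_{[\mathsf{K}_n]}$; Lemma~\ref{lemma:extremal-sequence-invariants} then gives that $p_{[\mathsf{K}_n]}$ is a vertex.

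The hard part is every clause marked ``intended conclusion'': each of these is a genuine extremal problem. Already the case $i=2$ asks for the largest possible number of collinear triples in a simple rank-$(n-1)$ matroid with $\binom n2$ points and no line of size $\ge4$, and I do not know a clean proof that the answer is $\binom n3$ (nor a description of the extremizers). Forcing the candidate matroid to be binary would recast this as a tractable additive-combinatorics question about zero-sum triples in a spanning subset of $\mathbb{F}_2^{n-1}$ of size $\binom n2$, but the valuative hypotheses at our disposal only force every line to have at most $3$ points, which is strictly weaker than being binary (for instance $AG(2,3)$ has all lines of size $3$ yet is ternary, not binary), so that reduction is not available for free. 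Moreover, even if all the cardinality-refined flat counts are pinned down, cardinality-valued valuative invariants need not determine the poset structure of the cyclic-flat lattice, and Remark~\ref{rem:nonisomorphic-same-vertex} shows that distinct decorated cyclic-flat lattices can still collapse to a single point; closing this gap would require either enlarging the list of invariants (for example with valuative counts of cyclic flats of prescribed rank and size, should those follow from \cite{bonin-kung}) or a direct structural identification of all matroids compatible with the accumulated data.

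A complementary route, in the spirit of the proof of Theorem~\ref{thm:extremal-non-representable}, would be to look for a single bespoke valuative invariant uniquely maximized at $\mathsf{K}_n$ --- say a positive combination of the flat-count invariants above, or a weighted count of circuits by length (valuative by \cite{bonin-kung}), tuned so that the maximum over $\mathcal{M}_{n-1,N}$ is forced onto a configuration realized by a graph on $n$ vertices with the maximum number of edges. The low-rank cases are already settled and can be used to calibrate whichever invariants are chosen: $\mathsf{K}_3=\U_{2,3}$ is uniform and hence extremal by Example~\ref{example:uniforms-are-extremal}, while $\mathsf{K}_4$ is a sparse paving matroid attaining the maximal number of non-bases and hence extremal by Proposition~\ref{prop:extremal-sparse-paving}.
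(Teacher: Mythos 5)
There is no proof in the paper to compare against: the statement you are addressing is stated as a \emph{conjecture} (the authors only remark that the Bonin--Miller characterization of the matroids $\mathsf{K}_n$ by certain valuative data is suggestive, but explicitly say that extremality ``does not follow easily'' because that characterization is not phrased as maximizing a sequence of valuative invariants). Your proposal is in exactly the spirit the authors have in mind --- run Lemma~\ref{lemma:extremal-sequence-invariants} on flat-count invariants refined by rank and size, after first restricting to the simple face --- but as you yourself flag, it is a plan rather than a proof, and the gaps you mark as ``intended conclusions'' are precisely where the difficulty lives. Already at stage $i=2$ you need that a simple matroid of rank $n-1$ on $\binom n2$ elements with no line of size $\ge 4$ has at most $\binom n3$ three-point lines, with the extremizers pinned down; nothing in the valuative toolkit of the paper delivers this, and the analogous problems for higher $i$ (where the ``graph-like'' flats must moreover be forced, via induction, to restrict to copies of $\mathsf{K}_{i+1}$) are harder still. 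None of these extremal statements is established, so the argument does not get off the ground beyond the calibration cases $\mathsf{K}_3=\U_{2,3}$ and $\mathsf{K}_4$ (which are indeed extremal, by Example~\ref{example:uniforms-are-extremal} and Proposition~\ref{prop:extremal-sparse-paving}).

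The second gap is equally serious and you identify it correctly: even if every cardinality-refined flat count were pinned to its value on $\mathsf{K}_n$, Lemma~\ref{lemma:extremal-sequence-invariants} requires that \emph{all} surviving isomorphism classes occupy the single point $p_{[\mathsf{K}_n]}$. In the paper's own applications this collapse is achieved either because the survivor is literally unique up to isomorphism (Example~\ref{example:thickening-uniform}) or because the matroids are paving, where Proposition~\ref{coro:paving-same-point} says the point depends only on the sizes of dependent hyperplanes (Step (ii) of Theorem~\ref{thm:extremal-non-representable}). For $n\ge 5$ the matroid $\mathsf{K}_n$ is far from paving, no analogue of that proposition is available, and rank-and-size data of flats (or even of cyclic flats) does not in general determine the symmetrized indicator function, since the coefficients in Theorem~\ref{thm:indicator-functions-schubert} depend on the M\"obius function of the chain lattice and hence on the poset structure of $\mathscr{Z}(\M)$, not just on its decorated level sets. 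So both halves of the strategy --- solving the stagewise extremal problems and proving the collapse to one point --- remain open, which is consistent with the paper leaving the statement as a conjecture.
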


One fact that leads us to believe in this conjecture is a theorem by Bonin and Miller \cite[Theorem~3.2]{bonin-miller}, which characterizes  graphic matroids of complete graphs (also known as \emph{braid matroids}) in terms of certain valuative invariants. Unfortunately, as the characterization is not in terms of a sequence of valuative invariants being \emph{maximized}, the extremality of $\mathsf{K}_n$ does not follow easily. However, we believe that some adjustments of the strategy can lead to a full proof of our conjecture. Similarly, we propose the following:

\begin{conjecture}\label{conj:proj-dowl}\mbox{}
\begin{enumerate}
\item    Projective geometries are extremal matroids. 
\item    Dowling geometries are extremal matroids. 
\end{enumerate}
\end{conjecture}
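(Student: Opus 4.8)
The plan is to apply Lemma~\ref{lemma:extremal-sequence-invariants} in both cases, using only valuative invariants that count flats of a prescribed rank and size --- the kind of invariant already used in Example~\ref{example:thickening-uniform} and in the proof of Theorem~\ref{thm:extremal-non-representable}. For each choice of parameters one would exhibit a sequence of such invariants whose successive maximization isolates a set $S_m$ of isomorphism classes consisting \emph{exactly} of the projective (resp.\ Dowling) geometries with those parameters; since all of these occupy the same point of $\Omega_{r,n}$, Lemma~\ref{lemma:extremal-sequence-invariants} then gives extremality. The ranks $r\le 2$ are trivial, since $\mathrm{PG}(0,q)=\U_{1,1}$, $\mathrm{PG}(1,q)=\U_{2,q+1}$ and $Q_1(G)=\U_{1,1}$, $Q_2(G)=\U_{2,|G|+2}$ are uniform and hence extremal by Example~\ref{example:uniforms-are-extremal}; so assume $r\ge 3$.

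For a projective geometry $\mathrm{PG}(r-1,q)$ one works inside $\Omega_{r,n}$ with $n=(q^r-1)/(q-1)$. First pass to the face of simple matroids (Proposition~\ref{prop:simple-face} with $m=3$). Then maximize $f_2(\M)=\#\{F\in\mathcal{L}(\M):\rk F=2,\ |F|=q+1\}$; from $\sum_{F:\rk F=2}\binom{|F|}{2}=\binom n2$ one obtains $f_2(\M)\le\binom n2/\binom{q+1}{2}$, a number equal to the number of lines of $\mathrm{PG}(r-1,q)$, with equality forcing every line to have exactly $q+1$ points. When $r=3$ this already concludes: a simple rank-$3$ matroid on $q^2+q+1$ points all of whose lines have size $q+1$ is a projective plane of order $q$, all such matroids are paving with the same numbers of dependent hyperplanes of each size, and so Corollary~\ref{coro:paving-same-point} places them at a single point. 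When $r\ge 4$ one adds a further invariant: by the de Bruijn--Erd\H{o}s theorem every rank-$3$ flat of a matroid with all lines of size $q+1$ has at least $q^2+q+1$ points, so maximizing $f_3(\M)=(q^2+q+1)\cdot\#\{F:\rk F=3\}-\sum_{F:\rk F=3}|F|$ (maximum value $0$, attained at $\mathrm{PG}(r-1,q)$) forces every rank-$3$ flat to be a projective plane of order $q$. Then any two coplanar lines meet, so Veblen's axiom holds, and the classical Veblen--Young theorem identifies the matroid (simple, of rank $\ge 4$, with lines of size $q+1$) as $\mathrm{PG}(r-1,q)$; thus $S_3$ is a single isomorphism class.

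For a Dowling geometry $Q_r(G)$ with $m=|G|$ one runs the analogous program inside $\Omega_{r,n}$ with $n=r+m\binom r2$. In $Q_r(G)$ the lines of size $\ge 3$ are the $\binom r2$ ``joint lines'' of size $m+2$ together with the $\binom r3 m^2$ ``triangle lines'' of size $3$. After forcing simplicity one would use a valuative count that forbids lines of more than $m+2$ points and singles out the matroids extremal for the corresponding density bound, and then maximize $\#\{F:\rk F=2,\ |F|=m+2\}$ and $\#\{F:\rk F=2,\ |F|=3\}$; by Kung-type density bounds for matroids with bounded line sizes together with the characterization of the extremal configurations (see \cite{bonin-kung} and the references therein), the surviving set should consist precisely of the Dowling geometries over groups of order $m$. (The case $m=1$ recovers $M(K_{r+1})=\mathsf{K}_{r+1}$, so this would also settle the braid-matroid conjecture above.) One then needs that $Q_r(G)$ and $Q_r(G')$ occupy the same point of $\Omega_{r,n}$ whenever $|G|=|G'|$; for $r=3$ this is immediate from Corollary~\ref{coro:paving-same-point}, as these matroids are paving and their flat-size statistics depend only on $|G|$, and in general it amounts to checking that the universal valuative invariant of $Q_r(G)$ depends only on $|G|$.

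The projective case is essentially complete modulo the Veblen--Young theorem, which is precisely of the required ``every plane is projective $\Rightarrow$ the whole matroid is $\mathrm{PG}$'' form; so I expect the real difficulty to lie in the Dowling case. There the main obstacles are (a) upgrading the Kung-type density bounds, which deliver the correct upper bounds, to an on-the-nose synthetic characterization stating that the flat-count maximizations land on the Dowling geometries and nothing else at the extremal point count $r+m\binom r2$; and (b) the coincidence of valuative data across the possibly several groups of a fixed order in ranks $\ge 4$. Item (b) is the subtler of the two: were it to fail, the conjecture would instead require each individual $Q_r(G)$ to be a vertex, which could not be shown by maximizing valuative invariants that do not separate those isomorphism classes. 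I expect (b) to hold and to be checkable directly from the combinatorial structure of Dowling geometries, but confirming it is the crux of the argument.
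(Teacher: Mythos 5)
This statement is left as an open \emph{conjecture} in the paper --- the authors offer no proof, only the remark that characterizations by Kung, Bennett--Bogart--Bonin and Bonin--Miller are ``promising points of departure'' --- so there is no proof of record to compare against and your proposal must be judged on its own. For part (1) your plan looks essentially sound and close to complete. The chain (simple $\to$ maximize the number of $(q+1)$-point lines $\to$ maximize your $f_3$) uses only counts of flats of fixed rank and size, which are valuative invariants exactly as used in the proof of Theorem~\ref{thm:extremal-non-representable}; the double count $f_2\le\binom{n}{2}/\binom{q+1}{2}$ with equality forcing every line to have exactly $q+1$ points is correct; and the bound $|F|\ge q^2+q+1$ for rank-$3$ flats follows from counting the lines through one point of $F$ (de~Bruijn--Erd\H{o}s is not the relevant statement, but the inequality is elementary and true). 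For $r=3$ the survivors are all projective planes of order $q$, which are paving with identical dependent-hyperplane statistics, so Corollary~\ref{coro:paving-same-point} places them at one point, as you say. For $r\ge4$ one genuine step is missing at the end: Veblen--Young identifies the \emph{point--line incidence geometry} with that of $\mathrm{PG}(r-1,q)$, but points and lines do not determine a matroid in general (e.g.\ $\U_{3,4}$ and $\U_{4,4}$ have the same lines), so you must still argue $\M\cong\mathrm{PG}(r-1,q)$. This is repairable: every flat of $\M$ is line-closed, hence a projective subspace; a maximal-chain argument shows each rank-$k$ flat has projective dimension $k-1$; and an induction on dimension shows every subspace is a flat, so the lattices of flats coincide. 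With that supplied, part (1) becomes a proof, and notably one in precisely the ``successively maximize valuative invariants'' format the paper says the known numerical characterizations lack.

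Part (2) is where the genuine gap lies, and it is larger than your write-up concedes. The invoked ``Kung-type density bounds together with the characterization of the extremal configurations'' do not exist in the form you need: bounded-line-length density extremality singles out projective geometries (when $m+1$ is a prime power), not Dowling geometries, so the statement you actually need is of the shape ``a simple rank-$r$ matroid on exactly $n=r+m\binom{r}{2}$ elements with $\binom{r}{2}$ lines of size $m+2$ and the maximum possible number of $3$-point lines is a Dowling geometry'', and for $r\ge4$ this is essentially a coordinatization theorem in the spirit of Dowling and Kahn--Kung which you neither state precisely nor prove. Even at rank $3$ the maximizers include the quasigroup (Latin-square) analogues of $Q_3(G)$, not only group Dowlings --- harmless there, since all of them are paving with the same flat statistics and Corollary~\ref{coro:paving-same-point} applies, but it shows the equality analysis will not land on groups alone and must be controlled in higher rank. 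Finally, the issue you label (b) --- that $Q_r(G)$ and $Q_r(G')$ with $|G|=|G'|$ occupy the same point of $\Omega_{r,n}$ when $r\ge4$ --- is indeed the crux and is left unproved; Lemma~\ref{lemma:extremal-sequence-invariants} requires all survivors of the maximization to sit at a single point, while the invariants you maximize cannot separate such classes, so without (b) the argument cannot close. In sum: part (1) is repairable to a proof with the lattice argument above, but part (2) remains a program rather than a proof, so the conjecture as stated is not established by the proposal.
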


Results by Kung \cite{kung}, Bennet, Bogart, and Bonin \cite{bennet-bogart-bonin}, and the aforementioned paper by Bonin and Miller \cite{bonin-miller} seem to be promising points of departure toward this conjecture.\footnote{After this article was posted on the arXiv, Joseph Bonin \cite{bonin} announced a proof for Conjectures~\ref{conj:braid} and \ref{conj:proj-dowl}.}

\subsection{Extremality and uniqueness for the Tutte polynomial}

If $f:\overline{\mathcal{M}}_{r,n}\to V$ is a valuative function valued in a real vector space $V$,
then $\conv\{f(\M):\M\in\overline{\mathcal{M}}_{r,n}\}\subseteq V$, which we might call ``the polytope of~$f$'',
is a linear projection of $\overline{\Omega}_{r,n}$,
for the same reason as in Remark~\ref{rem:symmetrization-is-a-projection}.
If $f$ is moreover an invariant then the polytope of~$f$ is a projection of $\Omega_{r,n}$.

An example comes from the Tutte polynomial, surely the most studied polynomial matroid invariant, which is valuative \cite[Corollary~5.7]{ardila-fink-rincon}.
Work of Kung implies that the polytope of the Tutte polynomial has dimension $r(n-r)$:
\cite[Corollary 6.10]{kung-syzygies} shows that this polytope \emph{linearly} spans an $r(n-r)+1$ dimensional subspace of $\mathbb{R}[x,y]$ (see also \cite[Corollary~4.2]{ferroni-schroter-tutte})
but by \cite[Lemma~3.5]{kung-syzygies} all its vertices inhabit an affine hyperplane.

There is literature on \emph{Tutte unique} matroids, those determined up to isomorphism by their Tutte polynomial.
The two matroids $\M,\N$ of Remark~\ref{rem:nonisomorphic-same-vertex}
are better known as a nonexample thereof, i.e., for having equal Tutte polynomials:
from our point of view this is a consequence of the equality $p_{[\M]} = p_{[\N]}$.
Our work in this article suggests that it would be of interest also to study ``Tutte extremal'' matroids,
those determining vertices of the polytope of the Tutte polynomial.
This appears to be a basically independent problem:
in particular it seems that neither Tutte extremality nor our extremality implies the other, nor is either related by implication to valuative or Tutte uniqueness.

\subsection{Edges of the polytopes}

After our study of vertices of $\overline{\Omega}_{r,n}$ and $\Omega_{r,n}$, the faces of greater dimension warrant attention. The following question about the edges intrigues us.

\begin{question}
    When do two matroids $\M$ and $\N$ give a pair of neighboring vertices of $\overline{\Omega}_{r,n}$? When do two isomorphism classes of extremal matroids yield neighboring vertices of $\Omega_{r,n}$?
\end{question}

The second part, concerning edges of the polytope $\Omega_{r,n}$, seems to be much more difficult, as it is meaningful only for extremal matroids, of which we understand little.
A special case of the question arises when one of the two vertices is fixed to be the uniform matroid:

\begin{problem}
    Characterize or describe the matroids (resp.\ isomorphism classes of matroids) that yield a vertex neighboring the point $\overline{p}_{\U_{r,n}}$ in $\overline{\Omega}_{r,n}$ (resp. $p_{\U_{k,n}}$ in $\Omega_{r,n}$). 
\end{problem}

\bibliographystyle{amsalpha}
\bibliography{bibliography}

\newcommand{\etalchar}[1]{$^{#1}$}
\providecommand{\bysame}{\leavevmode\hbox to3em{\hrulefill}\thinspace}
\providecommand{\MR}{\relax\ifhmode\unskip\space\fi MR }
\providecommand{\MRhref}[2]{%
  \href{http://www.ams.org/mathscinet-getitem?mr=#1}{#2}
}
\providecommand{\href}[2]{#2}
\begin{thebibliography}{MNWW11}

\bibitem[AFR10]{ardila-fink-rincon}
Federico Ardila, Alex Fink, and Felipe Rinc\'{o}n, \emph{Valuations for matroid
  polytope subdivisions}, Canad. J. Math. \textbf{62} (2010), no.~6,
  1228--1245. \MR{2760656}

\bibitem[Bar02]{barvinok}
Alexander Barvinok, \emph{A course in convexity}, Graduate Studies in
  Mathematics, vol.~54, American Mathematical Society, Providence, RI, 2002.
  \MR{1940576}

\bibitem[BBB94]{bennet-bogart-bonin}
M.~K. Bennett, Kenneth~P. Bogart, and Joseph~E. Bonin, \emph{The geometry of
  {D}owling lattices}, Adv. Math. \textbf{103} (1994), no.~2, 131--161.
  \MR{1265790}

\bibitem[BCCP24]{beke-csaji-csikvari-pituk}
Csongor Beke, Gergely~K\'{a}l Cs\'{a}ji, P\'{e}ter Csikv\'{a}ri, and S\'{a}ra
  Pituk, \emph{The {M}erino--{W}elsh conjecture is false for matroids}, Adv.
  Math. \textbf{446} (2024), Paper No. 109674. \MR{4737143}

\bibitem[BdM06]{bonin-demier}
Joseph~E. Bonin and Anna de~Mier, \emph{Lattice path matroids: structural
  properties}, European J. Combin. \textbf{27} (2006), no.~5, 701--738.
  \MR{2215428}

\bibitem[BHM{\etalchar{+}}22]{semismall}
Tom Braden, June Huh, Jacob~P. Matherne, Nicholas Proudfoot, and Botong Wang,
  \emph{A semi-small decomposition of the {C}how ring of a matroid}, Adv. Math.
  \textbf{409} (2022), Paper No. 108646. \MR{4477425}

\bibitem[BK18]{bonin-kung}
Joseph~E. Bonin and Joseph P.~S. Kung, \emph{The {$\mathcal{G}$}-invariant and
  catenary data of a matroid}, Adv. in Appl. Math. \textbf{94} (2018), 39--70.
  \MR{3739496}

\bibitem[BKS{\etalchar{+}}23]{berczi}
Krist\'{o}f B\'{e}rczi, Tam\'{a}s Kir\'{a}ly, Tam\'{a}s Schwarcz, Yutaro
  Yamaguchi, and Yu~Yokoi, \emph{Hypergraph characterization of split
  matroids}, J. Combin. Theory Ser. A \textbf{194} (2023), Paper No. 105697.
  \MR{4499860}

\bibitem[BM99]{bonin-miller}
Joseph~E. Bonin and William~P. Miller, \emph{Characterizing combinatorial
  geometries by numerical invariants}, European J. Combin. \textbf{20} (1999),
  no.~8, 713--724. \MR{1730820}

\bibitem[{Bon}25]{bonin}
Joseph~E. {Bonin}, \emph{{Characterizations of certain matroids by maximizing
  valuative invariants}}, arXiv e-prints (2025), arXiv:2507.14720.

\bibitem[Bry71]{brylawski}
Thomas~H. Brylawski, \emph{A combinatorial model for series-parallel networks},
  Trans. Amer. Math. Soc. \textbf{154} (1971), 1--22. \MR{288039}

\bibitem[CR70]{crapo-rota}
Henry~H. Crapo and Gian-Carlo Rota, \emph{On the foundations of combinatorial
  theory: {C}ombinatorial geometries}, preliminary ed., The M.I.T. Press,
  Cambridge, Mass.-London, 1970. \MR{290980}

\bibitem[DF10]{derksen-fink}
Harm Derksen and Alex Fink, \emph{Valuative invariants for polymatroids}, Adv.
  Math. \textbf{225} (2010), no.~4, 1840--1892. \MR{2680193}

\bibitem[Din71]{dinolt}
George~W. Dinolt, \emph{An extremal problem for non-separable matroids},
  Th\'eorie des matro\"ides ({R}encontre {F}ranco-{B}ritannique, {B}rest,
  1970), Lecture Notes in Math., vol. Vol. 211, Springer, Berlin-New York,
  1971, pp.~31--49. \MR{389626}

\bibitem[DLHK09]{deloera-haws-koppe}
Jes\'{u}s~A. De~Loera, David~C. Haws, and Matthias K\"{o}ppe, \emph{Ehrhart
  polynomials of matroid polytopes and polymatroids}, Discrete Comput. Geom.
  \textbf{42} (2009), no.~4, 670--702. \MR{2556462}

\bibitem[EHL23]{stellahedral}
Christopher Eur, June Huh, and Matt Larson, \emph{Stellahedral geometry of
  matroids}, Forum Math. Pi \textbf{11} (2023), Paper No. e24, 48. \MR{4653766}

\bibitem[Fer22a]{ferroni3}
Luis Ferroni, \emph{Matroids are not {E}hrhart positive}, Adv. Math.
  \textbf{402} (2022), Paper No. 108337. \MR{4396506}

\bibitem[{Fer}22b]{ferroni-minimal}
Luis {Ferroni}, \emph{On the {E}hrhart polynomial of minimal matroids},
  Discrete Comput. Geom. \textbf{68} (2022), no.~1, 255--273. \MR{4430288}

\bibitem[{F}er23]{ferroni}
Luis {F}erroni, \emph{Schubert matroids, {D}elannoy paths, and {S}peyer's
  invariant}, Comb. Theory \textbf{3} (2023), no.~3, Paper No. 13, 22.
  \MR{4683620}

\bibitem[FMSV24]{ferroni-matherne-stevens-vecchi}
Luis Ferroni, Jacob~P. Matherne, Matthew Stevens, and Lorenzo Vecchi,
  \emph{Hilbert-{P}oincar\'e{} series of matroid {C}how rings and intersection
  cohomology}, Adv. Math. \textbf{449} (2024), Paper No. 109733, 55.
  \MR{4749982}

\bibitem[FS24]{ferroni-schroter}
Luis Ferroni and Benjamin Schr\"oter, \emph{Valuative invariants for large
  classes of matroids}, J. Lond. Math. Soc. (2) \textbf{110} (2024), no.~3,
  Paper No. e12984, 86. \MR{4795885}

\bibitem[FS25]{ferroni-schroter-tutte}
Luis {Ferroni} and Benjamin {Schr\"oter}, \emph{Tutte polynomials of matroids
  as universal valuative invariants}, Comb. Theory \textbf{5} (2025), no.~2,
  Paper No. 3, 24. \MR{4936853}

\bibitem[Gre70]{greene}
Curtis Greene, \emph{A rank inequality for finite geometric lattices}, J.
  Combinatorial Theory \textbf{9} (1970), 357--364. \MR{266824}

\bibitem[Ham17]{hampe}
Simon Hampe, \emph{The intersection ring of matroids}, J. Combin. Theory Ser. B
  \textbf{122} (2017), 578--614. \MR{3575220}

\bibitem[HMM{\etalchar{+}}23]{hanely-et-al}
Derek Hanely, Jeremy~L. Martin, Daniel McGinnis, Dane Miyata, George~D. Nasr,
  Andr\'{e}s~R. Vindas-Mel\'{e}ndez, and Mei Yin, \emph{Ehrhart theory of
  paving and panhandle matroids}, Adv. Geom. \textbf{23} (2023), no.~4,
  501--526. \MR{4655534}

\bibitem[{Hos}24]{hoster}
Elena {Hoster}, \emph{{The Chow and augmented Chow polynomials of uniform
  matroids}}, arXiv e-prints (2024), arXiv:2410.22329.

\bibitem[HRS21]{hameister-rao-simpson}
Thomas Hameister, Sujit Rao, and Connor Simpson, \emph{Chow rings of vector
  space matroids}, J. Comb. \textbf{12} (2021), no.~1, 55--83. \MR{4195584}

\bibitem[JS17]{joswig-schroter}
Michael Joswig and Benjamin Schr\"{o}ter, \emph{Matroids from hypersimplex
  splits}, J. Combin. Theory Ser. A \textbf{151} (2017), 254--284. \MR{3663497}

\bibitem[Kun02]{kung}
Joseph P.~S. Kung, \emph{Curious characterizations of projective and affine
  geometries}, Adv. in Appl. Math. \textbf{28} (2002), no.~3-4, 523--543,
  Special issue in memory of Rodica Simion.

\bibitem[Kun17]{kung-syzygies}
\bysame, \emph{Syzygies on {T}utte polynomials of freedom matroids}, Ann. Comb.
  \textbf{21} (2017), no.~4, 605--628. \MR{3721644}

\bibitem[LPST20]{lee-patel-spink-tseng}
Mitchell Lee, Anand Patel, Hunter Spink, and Dennis Tseng, \emph{Orbits in
  {$(\mathbb{P}^r)^n$} and equivariant quantum cohomology}, Adv. Math.
  \textbf{362} (2020), 106951, 79. \MR{4046073}

\bibitem[MNWW11]{mayhew-etal}
Dillon Mayhew, Mike Newman, Dominic Welsh, and Geoff Whittle, \emph{On the
  asymptotic proportion of connected matroids}, European J. Combin. \textbf{32}
  (2011), no.~6, 882--890. \MR{2821559}

\bibitem[MR08]{mayhew-royle}
Dillon Mayhew and Gordon~F. Royle, \emph{Matroids with nine elements}, J.
  Combin. Theory Ser. B \textbf{98} (2008), no.~2, 415--431. \MR{2389607}

\bibitem[MW99]{merino-welsh}
C.~Merino and D.~J.~A. Welsh, \emph{Forests, colorings and acyclic orientations
  of the square lattice}, Ann. Comb. \textbf{3} (1999), no.~2-4, 417--429.
  \MR{1772357}

\bibitem[Oxl11]{oxley}
James Oxley, \emph{Matroid theory}, second ed., Oxford Graduate Texts in
  Mathematics, vol.~21, Oxford University Press, Oxford, 2011. \MR{2849819}

\bibitem[Pet15]{petersen}
T.~Kyle Petersen, \emph{Eulerian numbers}, Birkh\"{a}user Advanced Texts:
  Basler Lehrb\"{u}cher. [Birkh\"{a}user Advanced Texts: Basel Textbooks],
  Birkh\"{a}user/Springer, New York, 2015, With a foreword by Richard Stanley.
  \MR{3408615}

\bibitem[PRW08]{postnikov-reiner-williams}
Alex Postnikov, Victor Reiner, and Lauren Williams, \emph{Faces of generalized
  permutohedra}, Doc. Math. \textbf{13} (2008), 207--273. \MR{2520477}

\bibitem[PvdP15]{pendavingh-vanderpol}
Rudi Pendavingh and Jorn van~der Pol, \emph{On the number of matroids compared
  to the number of sparse paving matroids}, Electron. J. Combin. \textbf{22}
  (2015), no.~2, Paper 2.51, 17. \MR{3367294}

\bibitem[Sta12]{stanley-ec1}
Richard~P. Stanley, \emph{Enumerative combinatorics. {V}olume 1}, second ed.,
  Cambridge Studies in Advanced Mathematics, vol.~49, Cambridge University
  Press, Cambridge, 2012. \MR{2868112}

\bibitem[Wel76]{welsh}
Dominic J.~A. Welsh, \emph{Matroid theory}, L. M. S. Monographs, No. 8,
  Academic Press [Harcourt Brace Jovanovich, Publishers], London-New York,
  1976. \MR{0427112}

\end{thebibliography}

\end{document}